\documentclass[10pt]{article}
\usepackage[a4paper]{geometry}
\usepackage{amsmath, amssymb, amsthm}

\usepackage{appendix}
\usepackage{multicol} 
\usepackage{makeidx}
\usepackage{cite}
\usepackage{captdef}
\usepackage{paralist}

\usepackage[colorlinks=true]{hyperref}
\hypersetup{urlcolor=blue, citecolor=red}

\usepackage{psfrag}
\usepackage{graphicx}
\usepackage{graphics}
\usepackage[dvipsnames]{xcolor}

\usepackage[showonlyrefs]{mathtools}
\mathtoolsset{showonlyrefs=true}

\usepackage[in]{fullpage}

\usepackage{subcaption}

    \newtheorem{theorem}{Theorem}[section]
    \newtheorem{coro}[theorem]{Corollary}
    \newtheorem{lemma}[theorem]{Lemma}
    \newtheorem{prop}[theorem]{Proposition}
    \theoremstyle{definition}
    \newtheorem{definition}[theorem]{Definition}
    \newtheorem{remark}[theorem]{Remark}
    
\DeclareMathOperator{\dist}{dist}

\newcommand{\weakto}{\rightharpoonup}

\newcommand{\eps}{\varepsilon} 
\renewcommand{\epsilon}{\varepsilon}
\renewcommand{\a}{\alpha}
\renewcommand{\b}{\beta}
 
\newcommand{\N}{{\mathbb{N}}}

\newcommand{\R}{{\mathbb{R}}}

\numberwithin{equation}{section}



\title{An asymptotic relationship between Lane-Emden systems and the 1-bilaplacian equation}
\author{Nicola Abatangelo\thanks{N. Abatangelo is partially supported by the GNAMPA-INdAM project ``Equazioni nonlocali di tipo misto e geometrico'' (Italy) and the PRIN project 2022R537CS ``$NO^3$ - Nodal Optimization, NOnlinear elliptic equations, NOnlocal geometric problems, with a focus on regularity'' (Italy).}, Alberto Saldaña\thanks{A. Saldaña is supported by CONACYT grant A1-S-10457 (Mexico) and by UNAM-DGAPA-PAPIIT grant IA100923 (Mexico).}, and Hugo Tavares \thanks{Hugo Tavares is partially supported by the Portuguese government through FCT - Funda\c c\~ao para a Ci\^encia e a Tecnologia, I.P., through CAMGSD, IST-ID (grant UID/MAT/04459/2020) and through the project NoDES (PTDC/MAT-PUR/1788/2020).}}

\date{\today}

\begin{document}
\maketitle

\begin{abstract}
Consider the following Lane-Emden system with Dirichlet boundary conditions: 
\[
-\Delta U = |V|^{\b-1}V,\ 
-\Delta V = |U|^{\a-1}U  \text{ in }\Omega,\qquad 
U=V= 0  \text{ on }\partial \Omega,
\]
in a bounded domain $\Omega$, for $(\alpha,\beta)$ subcritical. We study the asymptotic behavior of least-energy solutions when $\beta\to \infty$, for any fixed $\alpha$ which, in the case $N\geq 3$, is smaller than $2/(N-2)$. We show that these solutions converge to least-energy solutions of a semilinear equation involving the 1-bilaplacian operator, establishing a new relationship between these objects. As a corollary, we deduce the asymptotic behavior of solutions to $p$-bilaplacian Lane-Emden equations as the power in the nonlinearity goes to infinity.

For the proof, we rely on the reduction by inversion method and on tools from nonsmooth analysis,  considering an auxiliary nonlinear eigenvalue problem. We characterize its value in terms of the Green function, and prove a Faber-Krahn type result. In the case of a ball, we can characterize explicitly the eigenvalue, as well as the limit profile of least-energy solutions to the system as $\b\to\infty$.
\end{abstract}

\noindent \begin{keywords}
Asymptotic estimates, Faber-Krahn-type inequality, Green function, Lane-Emden systems, $p$-biharmonic equation, second order nonlinear eigenvalues. 
\end{keywords}\\
\begin{MScodes} 
 	35B40, 35G15, 35J30, 35J47, 35P30, 49J52, 49Q10
\end{MScodes}

\section{Introduction}

In this paper we study the asymptotic behavior of least-energy solutions of the following Lane-Emden system 
\begin{align}\label{hsystem:intro}
-\Delta U_\b = |V_\b|^{\b-1}V_\b,\ 
-\Delta V_\b = |U_\b|^{\a-1}U_\b  \text{ in }\Omega,\qquad 
U_\b=V_\b= 0 & \text{ on }\partial \Omega,
\end{align}
where $\Omega$ is a bounded domain in $\R^N$ with $N\geq 1$ and $\a>0,\b>0$ satisfy 
\begin{equation}\label{subcritical:intro}
\alpha\cdot \beta\neq 1\qquad  \text{ and } \qquad \frac{1}{\a+1}+\frac{1}{\b+1} > \frac{N-2}{N},
\end{equation}
that is, the pair $(\alpha,\beta)$ is below the critical hyperbola. The case $\alpha\cdot \beta=1$ is different in nature, being an eigenvalue problem (see \cite{Montenegro}), which we do not consider here. We are mainly interested in characterizing the limit of $(U_\b,V_\b)$ as the power $\b$ tends to infinity, for any fixed $\alpha>0$ if $N=1,2$, or $\alpha\in (0,\frac{2}{N-2})$ if $N\geq 3$.

\begin{figure}[ht]
   \centering\includegraphics[height=6cm]{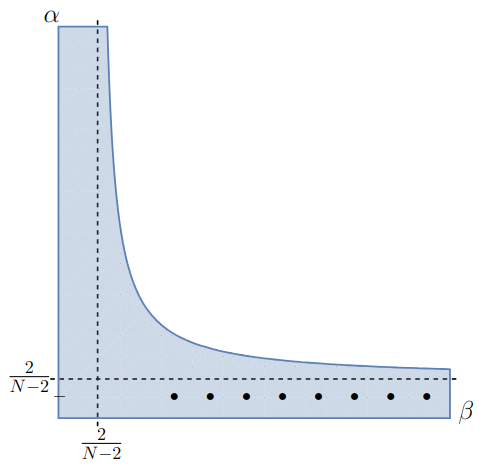}
  \caption{$(\a,\b)$ subcritical with $\a\in(0,\frac{2}{N-2})$ fixed and $\b\to\infty$.}
  \label{chfig}
    \end{figure}

The existence of least-energy solutions of \eqref{hsystem:intro} is well known and several methods for this (and to study qualitative properties) have been developed, see the survey \cite{BMT14}.  In particular, the reduction-by-inversion method establishes a bijection between classical solutions of \eqref{hsystem:intro} and critical points of the energy functional $J_{\a,\b}:W^{2,\frac{\beta+1}{\beta}}(\Omega)\cap W_0^{1,\frac{\beta+1}{\beta}}(\Omega) \to \R$ given by 
\begin{align}\label{J:intro}
J_{\a,\b}(U)
:=\frac{\b}{\b+1}\int_\Omega |\Delta U|^\frac{\b+1}{\b}\, dx - \frac{1}{\a+1}\int_\Omega |U|^{\a+1}\, dx.
\end{align}
More concretely, given $U\in W^{2,\frac{\beta+1}{\beta}}(\Omega)\cap W^{1,\frac{\beta+1}{\beta}}_0(\Omega)$ and $V:=-|\Delta U|^{\frac{1}{\beta}-1}\Delta U$, then $(U,V)$ is a classical solution of \eqref{hsystem:intro} if, and only, if, $J_{\alpha,\beta}'(U)=0$ (see, for instance, \cite[Lemma 4.8]{BMT14}, whose proof can be found in \cite[Theorem 1.1]{S08} and \cite[Appendix A]{BSR12}). Observe that a critical point of $J_{\a,\b}$ is a solution of the Navier $\frac{\b+1}{\b}$-bilaplacian equation
\begin{align}\label{bplap}
\Delta(|\Delta u|^{\frac{1}{\b}-1}\Delta u)=|u|^{\a-1}u\quad \text{ in }\Omega,\qquad u=\Delta u=0\quad \text{ on }\partial \Omega.
\end{align}
Within this framework, one says that $(U_\b,V_\b)\neq (0,0)$ is a least-energy solution of \eqref{hsystem:intro} if 
\begin{align*}
J_{\a,\b}(U_\b)=c_{\alpha,\beta}&:=\inf\{J_{\a,\b}(U):\ U \text{ is a nontrivial critical point of } J_{\alpha,\beta}\}\\
        &=\min\{J_{\a,\b}(U):(U,V)\text{ is a nontrivial solution of \eqref{hsystem:intro}}\}.
\end{align*}
We point out that \eqref{subcritical:intro} implies that the embedding $W^{2,\frac{\beta+1}{\beta}}(\Omega)\hookrightarrow L^{\a+1}(\Omega)$ is compact, and that the homogeneity on the left- and right-hand sides of \eqref{bplap} do not coincide. For other (equivalent) characterizations of the least energy level, see, for instance, \cite[Corollary 2.5, Theorem 3.8 and Propositions 5.4 and 5.12]{BMT14}.

Our asymptotic analysis establishes a new link between the limit profiles (as $\beta\to\infty$) of least-energy solutions of the Hamiltonian system \eqref{hsystem:intro} and the (Navier) 1-bilaplacian equation
\begin{align}\label{1bi:intro}
   \Delta\left(\frac{\Delta u}{|\Delta u|}\right) = |u|^{\a-1}u\quad \text{in }\Omega,\qquad u=\frac{\Delta u}{|\Delta u|}=0 \text{ on } \partial \Omega,
\end{align}
where for any fixed $\alpha>0$ if $N=1,2$, or $\alpha\in (0,\frac{2}{N-2})$ if $N\geq 3$. Equation \eqref{1bi:intro} is a formalism to represent a critical point of a nonsmooth variational problem.  Indeed, note that \eqref{1bi:intro} cannot be understood in a pointwise sense: $\Delta u/|\Delta u|$ is constant (and, at least formally, harmonic) on $\{\Delta u\neq 0\}$, while it is not well defined on $\{\Delta u=0\}$. This suggests that $\Delta u$ should not be thought of as a function, but rather as a \emph{distribution}, and this allows to give a consistent notion for a solution of \eqref{1bi:intro}. To be more precise, following \cite{PRT12}, let
\begin{align}\label{eq:BL0}
BL_{0}(\Omega):=\big\{u \in W_{0}^{1,1}(\Omega): |\Delta u|_T <\infty\big\},
\end{align}
where  
\begin{align*}
|\Delta u|_T=\sup \bigg\{\int_{\Omega} u \Delta \varphi : \varphi \in C^\infty_{c}(\Omega),\ |\varphi|_{\infty} \leq 1\bigg\}
\end{align*}
is the total variation of the \emph{Radon measure} $\Delta u$, see \cite[Section 2]{PRT12} and Lemma~\ref{alt:lem}. We say that $u\in BL_0(\Omega)$ is a solution of \eqref{1bi:intro} if there is $v\in W^{1,1}_0(\Omega)\cap L^\infty(\Omega)$ such that 
\begin{align}\label{v:intro}
-\Delta v = |u|^{q-2}u \quad \text{ in $\Omega$ (in distributional sense)},\quad |v|_\infty\leq 1, \quad \text{ and }\quad \int_\Omega u(-\Delta v) = |\Delta u|_T.
\end{align}
In this sense, the function $v$ gives a consistent meaning to the quotient $-\Delta u/|\Delta u|$. 

  The \emph{energy} of a solution $u\in BL_0(\Omega)$ of 
\eqref{1bi:intro} is given by 
\begin{align*}
\Phi(u)=|\Delta u|_T-\frac{1}{\a+1}|u|_{\a+1}^{\a+1},\qquad |u|_{\a+1}:=\left(\int_\Omega |u|^{\a+1}\right)^\frac{1}{\a+1},
\end{align*}
and we call $u$ a \emph{least-energy} solution of \eqref{1bi:intro} if $\Phi(u)=\min\{\Phi(w): w \text{ is a nontrivial solution of } \eqref{1bi:intro}\}.$  The functional $\Phi$ can be treated variationally using techniques from nonsmooth analysis, see Section~\ref{sec:1-bilaplacian}.

The following result characterizes the limit profiles and establishes a bridge between \eqref{hsystem:intro} and \eqref{1bi:intro}.  Let $G_{\Omega}$ denote the Dirichlet Green function for the Laplacian  in $\Omega$. Denote by
\begin{equation}\label{eq:1*1**}
1^*=\begin{cases}
    \frac{N}{N-1} & \text{ if } N\geq 2\\
    \infty & \text{ if } N=1
\end{cases},\qquad 1^{**}=\begin{cases}
    \frac{N}{N-2} & \text{ if } N\geq 3\\
    \infty & \text{ if } N=1,2
\end{cases}.
\end{equation}

\begin{theorem}\label{Uq:conv:thm:intro}
Let $\Omega\subset \R^N$ be a bounded domain of class $C^{2,\gamma}$ for some $\gamma\in(0,1]$. Let $\a\in(0,1^{**}-1)$, $\b>0$, and let $(U_\b,V_\b)$ be a least-energy solution of \eqref{hsystem:intro}. There is a least-energy solution $U_\infty\in BL_0(\Omega)$ of \eqref{1bi:intro} such that, up to a subsequence, as $\b\to\infty$,
\begin{align}
U_\b\to U_\infty\quad \text{in $W^{1,r}_0(\Omega)$} \text{ for every } r\in[1,1^*),\qquad \text{ and } \qquad |\Delta U_\b|_1\to |\Delta U_\infty|_T.
\end{align}
Furthermore, let $V_\infty\in W^{2,1+\frac1\a}(\Omega)\cap W_0^{1,1+\frac1\a}(\Omega)$ be the unique solution of
\[
-\Delta V_\infty= |U_\infty|^{\alpha-1}U_\infty \text{ in } \Omega,\qquad V_\infty=0 \text{ on } \partial \Omega.
\]
Then, up to a subsequence, as $\b\to\infty$
\begin{align*}
    V_\b\to V_\infty\qquad \text{weakly in }W^{2,1+\frac{1}{\alpha}}(\Omega),\text{ strongly in } W^{1,\eta}_0(\Omega) \text{ and } C^{m,\sigma}(\overline \Omega),
\end{align*}
for some $\eta>1^*$ and $\sigma\in (0,1)$ depending on $\alpha$, with $m=1$ if $\alpha<1^*-1$, $m=0$ if $\alpha\geq 1^*-1$.
\end{theorem}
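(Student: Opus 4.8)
The plan is to run the reduction-by-inversion variational setup for $J_{\a,\b}$ uniformly in $\b$, extract limits, and identify them through the nonsmooth variational structure of \eqref{1bi:intro}; I write $c_\infty$ for the least-energy level of $\Phi$ and use the characterisation $c_\infty=\frac{\a}{\a+1}\big(\inf_{u\in BL_0(\Omega)\setminus\{0\}}|\Delta u|_T/|u|_{\a+1}\big)^{(\a+1)/\a}$ from Section~\ref{sec:1-bilaplacian}. \emph{Uniform bounds on $c_{\a,\b}$:} from $c_{\a,\b}=\inf_{U\neq0}\max_{t>0}J_{\a,\b}(tU)$, optimising in $t$ gives $\max_{t>0}J_{\a,\b}(tU)=\big(\tfrac{\b}{\b+1}-\tfrac1{\a+1}\big)\big(A_\b(U)/B(U)\big)^{\frac{\b+1}{\a\b-1}}A_\b(U)$, where $A_\b(U)=\int_\Omega|\Delta U|^{\frac{\b+1}{\b}}$ and $B(U)=\int_\Omega|U|^{\a+1}$. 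Evaluating at a fixed $\varphi\in C^\infty_c(\Omega)\setminus\{0\}$ and letting $\b\to\infty$ (so $A_\b(\varphi)\to|\Delta\varphi|_T$ by dominated convergence and $\frac{\b+1}{\a\b-1}\to\frac1\a$) bounds $\limsup_{\b\to\infty}c_{\a,\b}$ by $\frac{\a}{\a+1}(|\Delta\varphi|_T/|\varphi|_{\a+1})^{(\a+1)/\a}$; minimising over $\varphi$ and invoking density of $C^\infty_c(\Omega)$ in $BL_0(\Omega)$ for the strict convergence (cf.\ \cite{PRT12}) yields $\limsup_{\b\to\infty}c_{\a,\b}\le c_\infty$. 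Testing $J_{\a,\b}'(U_\b)=0$ with $U_\b$ gives the Nehari identity $X_\b:=\int_\Omega|\Delta U_\b|^{\frac{\b+1}{\b}}=\int_\Omega|U_\b|^{\a+1}$ and $c_{\a,\b}=\big(\tfrac{\b}{\b+1}-\tfrac1{\a+1}\big)X_\b$; combined with the estimate $|U|_{\a+1}\le C\|\Delta U\|_{L^{(\b+1)/\b}}$ with $C$ independent of $\b$ (from $G_\Omega(x,y)\lesssim|x-y|^{2-N}$ and Young's inequality, using $\a+1<1^{**}$), this forces $X_\b$ bounded away from $0$. Hence $0<c_0\le c_{\a,\b}\le C$, and, along a subsequence, $X_\b\to L\in(0,\infty)$.

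\emph{A priori bounds and compactness.} By H\"older, $|\Delta U_\b|_1\le|\Omega|^{\frac1{\b+1}}X_\b^{\frac\b{\b+1}}$ stays bounded (and $\to L$), so the Boccardo--Gallou\"et estimates give $U_\b$ bounded in $W^{1,r}_0(\Omega)$ for every $r\in[1,1^*)$ and $\Delta U_\b$ bounded in Radon measures. With $V_\b=-|\Delta U_\b|^{\frac1\b-1}\Delta U_\b$ one has $-\Delta V_\b=|U_\b|^{\a-1}U_\b$, whose right-hand side is bounded in $L^{1+\frac1\a}(\Omega)$; since $\a<1^{**}-1$ yields $1+\frac1\a>\frac N2$, $L^p$-regularity and Morrey's theorem bound $V_\b$ in $W^{2,1+\frac1\a}(\Omega)\cap W^{1,1+\frac1\a}_0(\Omega)$ and in $C^{0,\sigma}(\overline\Omega)$. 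As $\int_\Omega|V_\b|^{\b+1}=X_\b$ is bounded, a ball argument at an interior maximum point of $|V_\b|$ using the uniform H\"older bound (if $|V_\b|_\infty\ge M>1$ along a subsequence, then $|V_\b|>1$ on a ball of fixed radius, whence $\int_\Omega|V_\b|^{\b+1}\to\infty$) gives $\limsup_{\b\to\infty}|V_\b|_\infty\le1$. Passing to a further subsequence, $U_\b\rightharpoonup U_\infty$ in $W^{1,r}_0(\Omega)$ for $r<1^*$ — hence $U_\b\to U_\infty$ strongly in $L^{\a+1}(\Omega)$, as $\a+1$ lies below the Sobolev exponent of $W^{1,r}_0(\Omega)$ for $r<1^*$ close enough to $1^*$ — while $\Delta U_\b\rightharpoonup\Delta U_\infty$ weakly-$*$ as measures and $V_\b\rightharpoonup V_\infty$ in $W^{2,1+\frac1\a}(\Omega)$.

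\emph{Identification of $(U_\infty,V_\infty)$ and the energy.} In the limit one obtains $-\Delta V_\infty=|U_\infty|^{\a-1}U_\infty$ in $\Omega$, $V_\infty=0$ on $\partial\Omega$ (the unique such $V_\infty$ in $W^{2,1+\frac1\a}\cap W^{1,1+\frac1\a}_0$), $|V_\infty|_\infty\le1$, and $\int_\Omega U_\infty(-\Delta V_\infty)=\int_\Omega|U_\infty|^{\a+1}=\lim_\b X_\b=L$ by the preceding step and strong $L^{\a+1}$-convergence. Lower semicontinuity of the total variation gives $|\Delta U_\infty|_T\le\liminf_\b|\Delta U_\b|_1\le L$, while the inequality $\int_\Omega u(-\Delta v)\le|v|_\infty|\Delta u|_T$ (valid for $u\in BL_0(\Omega)$, $v\in W^{1,1}_0(\Omega)\cap L^\infty(\Omega)$) forces $L\le|\Delta U_\infty|_T$; thus these coincide, so $|\Delta U_\b|_1\to|\Delta U_\infty|_T$, and — $L>0$ implying $U_\infty\neq0$ — the displayed relations say exactly that $U_\infty$ is a solution of \eqref{1bi:intro} with $v=V_\infty$. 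Finally $\Phi(U_\infty)=L-\tfrac1{\a+1}L=\tfrac\a{\a+1}L=\lim_\b c_{\a,\b}$; combined with $\Phi(U_\infty)\ge c_\infty$ (it is a nontrivial solution) and $\limsup_\b c_{\a,\b}\le c_\infty$ from the first step, this gives $\Phi(U_\infty)=c_\infty$, i.e.\ $U_\infty$ is a least-energy solution.

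\emph{Convergence of $V_\b$, and the main obstacle.} Strong $L^{\a+1}$-convergence of $U_\b$ makes $|U_\b|^{\a-1}U_\b\to|U_\infty|^{\a-1}U_\infty$ strongly in $L^{1+\frac1\a}(\Omega)$, so $V_\b\to V_\infty$ strongly in $W^{2,1+\frac1\a}(\Omega)$ by continuity of the solution operator, hence strongly in $W^{1,\eta}_0(\Omega)$ for some $\eta>1^*$ and in $C^{m,\sigma}(\overline\Omega)$, with $m=1$ if $1+\frac1\a>N$ (equivalently $\a<1^*-1$) and $m=0$ otherwise, by Rellich--Kondrachov and Morrey. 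It remains to upgrade $U_\b\rightharpoonup U_\infty$ to \emph{strong} convergence in $W^{1,r}_0(\Omega)$ for $r\in[1,1^*)$; this is the main technical obstacle, since $|V_\b|^{\b-1}V_\b$ need not converge in $L^1(\Omega)$, its mass concentrating on $\{|V_\infty|=1\}$. I would obtain it from the compactness theory for elliptic equations with $L^1$/measure data: testing the equation solved by $U_\b-U_\infty$ against truncations and using the uniform bounds yields a.e.\ convergence of $\nabla U_\b$, which with equi-integrability of $|\nabla U_\b|^r$ gives strong convergence by Vitali's theorem; alternatively, via the Green representation $U_\b=\int_\Omega G_\Omega(\cdot,y)\,|V_\b(y)|^{\b-1}V_\b(y)\,dy$ together with the tightness $|\Delta U_\b|_1\to|\Delta U_\infty|_T$ from the previous step, which prevents loss of mass and hence of compactness.
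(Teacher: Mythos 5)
The student's proposal takes a genuinely different route from the paper. The paper reduces the whole statement to the auxiliary eigenvalue problem: by \cite[Lemma~4.8]{BMT14}, $u_\b:=\Lambda_{1+\frac1\b,\a+1}^{-\b/(\a\b-1)}U_\b$ is a $\Lambda_{1+\frac1\b,\a+1}$-minimizer with $|u_\b|_{\a+1}=1$, and then Theorem~\ref{Uq:conv:thm} applies Proposition~\ref{c1:prop2} (continuity of $(p,q)\mapsto\Lambda_{p,q}$ together with strong convergence of normalised minimizers) in one stroke, and Proposition~\ref{prop:existence_of_les} identifies the limit as a least-energy solution; Corollary~\ref{Vq:conv:coro} handles $V_\b$ by elliptic regularity. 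You instead work directly with the least-energy levels $c_{\a,\b}$ of $J_{\a,\b}$, deriving an upper bound $\limsup_{\b\to\infty}c_{\a,\b}\le c_\infty$ from the fibering/max-over-rays formula and a lower bound from the Nehari identity $X_\b=\int|\Delta U_\b|^{(\b+1)/\b}=\int|U_\b|^{\a+1}$ together with the Young-inequality estimate $|U|_{\a+1}\le C|\Delta U|_{(\b+1)/\b}$ uniformly in $\b$. The algebra in the max-over-rays computation, the Nehari bounds, and the identification of $V_\infty$ and of $U_\infty$ as a solution are all correct; your route effectively re-proves inline parts of what the paper delegates to Proposition~\ref{c1:prop2}, but it is equivalent via the scaling reduction and arguably more transparent about where the energy levels go.

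There are, however, two gaps. First, your upper bound step invokes ``density of $C^\infty_c(\Omega)$ in $BL_0(\Omega)$ for the strict convergence (cf.\ \cite{PRT12})'' to conclude $\inf_{\varphi\in C^\infty_c(\Omega)\setminus\{0\}}|\Delta\varphi|_T/|\varphi|_{\a+1}=\Lambda_{1,\a+1}$. This is not what \cite{PRT12} gives: Lemma~\ref{density:lemma} gives density of $C^\infty(\Omega)\cap C(\overline\Omega)\cap BL_0(\Omega)$, not of compactly supported functions. In fact strict density of $C^\infty_c(\Omega)$ is quite doubtful (the closure of $C^\infty_c$ in $|\Delta\cdot|_1$, the space $W^{2,1}_{\Delta,0}(\Omega)$ of \cite{PRT15}, should in general be a proper subspace of $W^{2,1}_\Delta(\Omega)$, since cutting off near $\partial\Omega$ produces $O(1)$ total variation contributions). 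Without the exact value of the $\limsup$, you only obtain $c_\infty\le\Phi(U_\infty)\le\tilde c_\infty$ for some $\tilde c_\infty\ge c_\infty$, and the least-energy claim does not close. The fix is exactly what the paper does in Proposition~\ref{c1:prop2}: test with $C^{2,\gamma}(\overline\Omega)$ functions vanishing on $\partial\Omega$ (dense in $W^{2,1}_\Delta(\Omega)$ by Lemma~\ref{lemma:densityC^2}), which lie in $W^{2,p}_\Delta(\Omega)$ for every $p\ge1$ and hence are admissible for $J_{\a,\b}$. Second, the upgrade from weak to strong $W^{1,r}_0$ convergence that you flag as the ``main technical obstacle'' is in fact available as a black box: the bound $\sup_\b|\Delta U_\b|_1<\infty$ places $(U_\b)$ in a bounded subset of $BL_0(\Omega)$, and the embedding $BL_0(\Omega)\hookrightarrow W^{1,r}_0(\Omega)$ is \emph{compact} for every $r\in[1,1^*)$ — this is precisely Proposition~\ref{compact:prop} (i.e.\ \cite[Proposition~5.10]{PonceBook}), which the paper uses silently inside Proposition~\ref{c1:prop2}. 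Your alternative sketches (truncation/Vitali, or Green representation plus tightness $|\Delta U_\b|_1\to|\Delta U_\infty|_T$) would also work but are not necessary once this compactness is cited.
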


If $\Omega$ is the unit ball $B_1$, we can obtain more explicit formulas. We present them for $N\geq 3$ only, but it is possible to do the same for $N=1,2$.

\begin{theorem}\label{ball:thm}
Consider $N\geq 3$. Under the assumptions and notations of Theorem~\ref{Uq:conv:thm:intro}, it holds
\begin{align}\label{k:def}
U_\infty(x) = A_{N,\a}(|x|^{2-N}-1) \quad \text{for }x\in B_1, 
\qquad\text{where}\quad
A_{N,\a} := \left(\frac{2N\, \Gamma \left(\frac{2}{N-2}\right)}
{\Gamma (\alpha +2) \Gamma \left(\frac{2}{N-2}-\alpha \right)}
\right)^{\frac1\a}. 
\end{align}
Moreover, the limit $(U_\infty,V_\infty)$ is unique, and the convergence $(U_\b,V_\b)\to (U_\infty,V_\infty)$ holds without necessarily considering subsequences.
\end{theorem}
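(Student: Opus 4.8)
The plan is to compute the limit profile explicitly on the ball, exploiting radial symmetry and the formula for $U_\infty$ as a solution of the 1-bilaplacian equation, and then to upgrade the subsequential convergence of Theorem~\ref{Uq:conv:thm:intro} to full convergence by a uniqueness argument. First I would invoke the characterization \eqref{v:intro}: since $U_\infty$ is a least-energy solution of \eqref{1bi:intro}, there is $v\in W^{1,1}_0(B_1)\cap L^\infty(B_1)$ with $-\Delta v = |U_\infty|^{\alpha-1}U_\infty$, $\|v\|_\infty\le 1$, and $\int_{B_1} U_\infty(-\Delta v)=|\Delta U_\infty|_T$. By Faber–Krahn (the result advertised in the abstract) and radial symmetry of the ball, the least-energy solution must be radial, radially monotone, and of constant sign; normalizing $U_\infty>0$, the equality case forces $v\equiv 1$ on the support of the measure $\Delta U_\infty$, which in turn forces $\Delta U_\infty$ to be a (signed) measure concentrated where $v=1$. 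Since $v$ is superharmonic, radial, and $\le 1$ with $v=0$ on $\partial B_1$, the set $\{v=1\}$ is either $\{0\}$ or empty; so $\Delta U_\infty$ is a multiple of the Dirac mass $\delta_0$, i.e.\ $U_\infty = c\,G_{B_1}(\cdot,0) = A_{N,\alpha}(|x|^{2-N}-1)$ up to the constant.

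The constant $A_{N,\alpha}$ is then pinned down by the remaining compatibility condition. With $U_\infty = A(|x|^{2-N}-1)$, one has $-\Delta v = |U_\infty|^{\alpha-1}U_\infty = A^\alpha(|x|^{2-N}-1)^\alpha$ on $B_1$ with $v=0$ on $\partial B_1$, and the requirement is exactly $v(0)=1$ (the maximum of $v$, attained at the origin by the maximum principle, must equal $1$). Solving this radial ODE, $v(0) = \int_{B_1} G_{B_1}(0,y)\,|U_\infty(y)|^{\alpha-1}U_\infty(y)\,dy = A^\alpha \int_{B_1} G_{B_1}(0,y)(|y|^{2-N}-1)^\alpha\,dy$. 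Evaluating this integral in radial coordinates (substituting $t=|y|^{2-N}-1$ or $s=r^{N-2}$ and recognizing a Beta integral) yields a value of the form $A^\alpha\cdot c_{N,\alpha}$ with $c_{N,\alpha} = \Gamma(\alpha+2)\Gamma(\tfrac{2}{N-2}-\alpha)/(2N\,\Gamma(\tfrac{2}{N-2}))$; note the hypothesis $\alpha<1^{**}-1=\tfrac{2}{N-2}$ is precisely what makes this integral finite. Setting $v(0)=1$ gives $A^\alpha = c_{N,\alpha}^{-1}$, which is \eqref{k:def}.

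For the uniqueness and full-convergence statement, I would argue that the limit $(U_\infty,V_\infty)$ is independent of the subsequence: every subsequence of $(U_\b,V_\b)$ has a further subsequence converging to \emph{some} least-energy solution of \eqref{1bi:intro} in the topologies of Theorem~\ref{Uq:conv:thm:intro}, but by the argument above every least-energy solution on $B_1$ equals the explicit $U_\infty$ in \eqref{k:def} (the sign ambiguity $\pm U_\infty$ is removed by noting both give the same energy; if one wants genuine uniqueness one either fixes a sign convention or observes $\Phi$ is even so we identify $U_\infty$ up to sign, or—more likely—the least-energy level is achieved on both and the theorem's statement implicitly means up to this symmetry). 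Since all subsequential limits coincide, the full sequence converges; and then $V_\b\to V_\infty$ follows from $V_\infty$ being uniquely determined by $U_\infty$ via $-\Delta V_\infty=|U_\infty|^{\alpha-1}U_\infty$ together with the continuity of the solution operator already established in Theorem~\ref{Uq:conv:thm:intro}.

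The main obstacle I anticipate is the rigorous justification that the equality case in $\int U_\infty(-\Delta v)=|\Delta U_\infty|_T$ forces $\Delta U_\infty$ to be a point mass at the origin. This requires a careful measure-theoretic argument: writing $\Delta U_\infty = \mu$ as a signed Radon measure, the identity reads $\int v\,d\mu = |\mu|_T = \int 1\,d|\mu|$, so $\int (1-v\,\mathrm{sgn}\,\mu)\,d|\mu| = 0$ with $1-v\,\mathrm{sgn}\,\mu\ge 0$, forcing $v=\mathrm{sgn}\,\mu$ $|\mu|$-a.e.; combined with $|v|\le 1$, continuity of $v$ away from the singularity, and the structure of superharmonic radial functions, one concludes $\mathrm{supp}\,\mu\subseteq\{v=1\}\cup\{v=-1\}=\{0\}$ (the boundary sphere being excluded since $v=0$ there). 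Making the regularity of $v$ near the origin and the exclusion of pathological concentration precise—and checking that $G_{B_1}(\cdot,0)\in BL_0(B_1)$ with $|\Delta G_{B_1}(\cdot,0)|_T=1$ so that it is an admissible competitor—is where the technical weight lies, though each ingredient is standard once set up correctly.
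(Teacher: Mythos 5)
Your overall strategy—analyzing the limiting $1$-bilaplacian equation directly rather than the limiting process—is a genuinely different route from the paper, and your computation of the constant via the normalization $v(0)=1$ does produce the correct value of $A_{N,\alpha}$. The paper instead works entirely at the level of the approximating system: in Proposition~\ref{kappa:prop} it symmetrizes the classical solutions $(U_\b,V_\b)$ (which are radial, positive and radially decreasing by a standard rearrangement applied to the system), passes radiality and monotonicity to the limit, and then shows $V_\infty\le 1$ with $V_\infty<1$ away from the origin, so that $-\Delta U_\b=V_\b^\b\to 0$ locally uniformly in $B_1\setminus\{0\}$ and hence $-\Delta U_\infty$ is a point mass at $0$. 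The constant $\kappa_{N,\alpha}$ is then read off from $\kappa_{N,\alpha}=\Lambda_{1,\alpha+1}^{(\alpha+1)/\alpha}$ (Lemma~\ref{kappa1:lem}) together with the explicit value of $\Lambda_{1,q}(B_1)$ in Theorem~\ref{eigenball:thm}.

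There are two concrete gaps in your proposal. First, the claim that ``by Faber--Krahn and radial symmetry of the ball, the least-energy solution must be radial'' does not follow. Proposition~\ref{faber-krahn} compares $\Lambda_{1,q}$ across different domains; its proof runs through Talenti applied to Green functions and says nothing about the symmetry of minimizers on a fixed domain, and it is not at all clear that Schwarz rearrangement of a $BL_0$-function decreases $|\Delta\cdot|_T$. The radiality of $U_\infty$ must be obtained as the paper does, by symmetrizing the classical approximating solutions $(U_\b,V_\b)$ and passing to the limit. Second, your plan for full convergence requires that \emph{every} least-energy solution of \eqref{1bi:intro} on $B_1$ coincide with the explicit profile. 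That would follow from Proposition~\ref{prop:uniqueness} if $0$ were the \emph{unique} maximum of $x\mapsto|G_{B_1}(x,\cdot)|_q$, but Proposition~\ref{uniquexm} (via the non-strict Talenti inequality) only gives that $0$ is \emph{a} maximum. The paper deliberately avoids this by proving the explicit form for every subsequential limit of the specific sequence $(U_\b,V_\b)$—for which the constant $\kappa_{N,\alpha}=\Lambda_{1,\alpha+1}^{(\alpha+1)/\alpha}$ is subsequence-independent—so that all limits coincide without ever needing uniqueness of $x_M$. You would do well to restructure your argument the same way. A smaller point: your justification that $\{v=1\}=\{0\}$ is sketchy; the clean argument is that if $v=1$ on $\partial B_{r_0}$ with $r_0>0$, then the minimum principle forces $v\equiv1$ on $B_{r_0}$, hence $|U_\infty|^{\alpha-1}U_\infty=-\Delta v=0$ there, so $U_\infty\equiv0$ on $B_{r_0}$, contradicting $U_\infty(x)=\int_{B_1}G_{B_1}(x,y)\,d\mu(y)>0$ for a nontrivial positive measure $\mu$.
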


\begin{remark}
    For the exact statement regarding the convergence of $V_\beta$ to $V_\infty$ in Theorems~\ref{Uq:conv:thm:intro} and~\ref{ball:thm}, see Corollary~\ref{Vq:conv:coro} below. 
\end{remark}

We remark that Theorems~\ref{Uq:conv:thm:intro} and~\ref{ball:thm} can also be interpreted in the setting of the Navier $\frac{\a+1}{\a}$-bilaplacian Lane-Emden equations when the power of the nonlinearity goes to infinity.
\begin{coro}\label{coro:intro}
Let $\Omega\subset \R^N$ be a bounded domain of class $C^{2,\gamma}$ for some $\gamma\in(0,1)$. Let $\a\in(0,1^{**}-1)$, $\b>0$, and let $v_\b$ be a least-energy solution of
\begin{align}\label{pbilap}
\Delta(|\Delta v_\b|^{\frac{1}{\a}-1}\Delta v_\b)=|v_\b|^{\b-1}v_\b\quad \text{ in }\Omega,\qquad v_\b=\Delta v_\b=0\quad \text{ on }\partial \Omega.
\end{align}
Then $v_\b\to V_\infty$ as $\beta\to \infty$, where the convergence and $V_\infty$ are as in Theorem~\ref{Uq:conv:thm:intro}.  
\end{coro}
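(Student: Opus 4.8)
The plan is to recognize \eqref{pbilap} as the Euler--Lagrange equation produced by the reduction-by-inversion method applied to the \emph{second} component of the Hamiltonian system \eqref{hsystem:intro}, and then to read off Corollary~\ref{coro:intro} directly from Theorem~\ref{Uq:conv:thm:intro}. Exchanging $\a$ and $\b$ in \eqref{J:intro} one considers
\[
J_{\b,\a}(W)=\frac{\a}{\a+1}\int_\Omega|\Delta W|^{\frac{\a+1}{\a}}\,dx-\frac{1}{\b+1}\int_\Omega|W|^{\b+1}\,dx,\qquad W\in W^{2,\frac{\a+1}{\a}}(\Omega)\cap W_0^{1,\frac{\a+1}{\a}}(\Omega),
\]
whose critical points are precisely the solutions of \eqref{pbilap} (so that $J_{\b,\a}$ is the natural energy of \eqref{pbilap}). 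Condition \eqref{subcritical:intro} is symmetric in $(\a,\b)$ and is automatically satisfied once $\a<1^{**}-1$, for every $\b>0$; in particular the embedding $W^{2,\frac{\a+1}{\a}}(\Omega)\hookrightarrow L^{\b+1}(\Omega)$ is compact, so $J_{\b,\a}$ admits a well-defined least-energy level and least-energy critical points exist.

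Next I would invoke the reduction-by-inversion correspondence in the form with the roles of $\a$ and $\b$ interchanged (the statement recalled after \eqref{J:intro}; see \cite[Lemma~4.8]{BMT14} and the references therein): the map $v\mapsto\bigl(-|\Delta v|^{\frac1\a-1}\Delta v,\,v\bigr)$ is a bijection from classical solutions of \eqref{pbilap} onto classical solutions $(U,V)$ of \eqref{hsystem:intro}, with $V=v$ and $U=-|\Delta v|^{\frac1\a-1}\Delta v$. The one point that requires an argument is that this bijection preserves the least-energy property. For this I would use the elementary identity that, on every solution $(U,V)$ of \eqref{hsystem:intro}, one has $J_{\a,\b}(U)=J_{\b,\a}(V)$: testing the two equations of \eqref{hsystem:intro} against $V$ and $U$ gives $\int_\Omega|U|^{\a+1}\,dx=\int_\Omega|V|^{\b+1}\,dx=\int_\Omega\nabla U\cdot\nabla V\,dx$, while $|\Delta U|=|V|^{\b}$ and $|\Delta V|=|U|^{\a}$ turn both $J_{\a,\b}(U)$ and $J_{\b,\a}(V)$ into $\bigl(1-\tfrac1{\a+1}-\tfrac1{\b+1}\bigr)\int_\Omega|U|^{\a+1}\,dx$. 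Consequently $c_{\a,\b}=c_{\b,\a}$, and $v$ is a least-energy solution of \eqref{pbilap} if and only if $v=V$ for some least-energy solution $(U,V)$ of \eqref{hsystem:intro}.

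Finally, given a least-energy solution $v_\b$ of \eqref{pbilap}, the previous step yields a least-energy solution $(U_\b,V_\b)$ of \eqref{hsystem:intro} with $V_\b=v_\b$. Applying Theorem~\ref{Uq:conv:thm:intro} to this family produces a least-energy solution $U_\infty\in BL_0(\Omega)$ of \eqref{1bi:intro} such that, along a subsequence, $v_\b=V_\b\to V_\infty$ weakly in $W^{2,1+\frac1\a}(\Omega)$ and strongly in $W_0^{1,\eta}(\Omega)$ and $C^{m,\sigma}(\overline\Omega)$, where $V_\infty$ is the unique solution of $-\Delta V_\infty=|U_\infty|^{\a-1}U_\infty$ in $\Omega$, $V_\infty=0$ on $\partial\Omega$; if $\Omega=B_1$, Theorem~\ref{ball:thm} identifies $V_\infty$ explicitly and upgrades the convergence to the full family. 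Thus the only genuinely new ingredient is the least-energy identification above — a rather mild point; everything else is the symmetry $\a\leftrightarrow\b$ together with a direct application of Theorem~\ref{Uq:conv:thm:intro}.
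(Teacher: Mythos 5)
Your proposal is correct and follows the approach the paper has in mind (the paper states Corollary~\ref{coro:intro} without an explicit proof, relying precisely on this reduction-by-inversion symmetry $\alpha\leftrightarrow\beta$). The key point you rightly flag and verify — that the bijection $(U,V)\leftrightarrow V$ preserves the least-energy level because $J_{\a,\b}(U)=J_{\b,\a}(V)=\bigl(1-\tfrac1{\a+1}-\tfrac1{\b+1}\bigr)\int_\Omega|U|^{\a+1}\,dx$ on solutions — is the only genuine ingredient beyond Theorem~\ref{Uq:conv:thm:intro}, and your computation is exact. One could add the small observation that $\a\b\neq1$ is automatic for $\b$ large, so \eqref{subcritical:intro} holds along the limit, but this does not affect the argument.
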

This result is new even for $N \leq 3$ and $\alpha=1$, where \eqref{pbilap} reduces to the Navier bilaplacian equation 
\begin{align}\label{bilap:int}
\Delta^2 v_\b=|v_\b|^{\b-1}v_\b\quad \text{ in }B_1,\qquad v_\b=\Delta v_\b=0\quad \text{ on }\partial B_1.
\end{align}
It is interesting to note that the behavior described in Corollary~\ref{coro:intro} for \eqref{bilap:int} is different from the results obtained in \cite{AG06}, where the asymptotic profile of the solution $v_\beta$ of \eqref{bilap:int} is studied as $\b\to\infty$ in dimension $N=4.$  We shall return to this issue later in this introduction  when we discuss the previous literature. 

Next, we state Theorem~\ref{Uq:conv:thm:intro} in the setting of the Navier $\frac{\b+1}{\b}$-bilaplacian equation \eqref{bplap}.
\begin{coro}\label{coro:intro:2}
Let $\Omega\subset \R^N$ be a bounded domain of class $C^{2,\gamma}$ for some $\gamma\in(0,1)$. Let $\a\in[0,1^{**}-1)$,  and let $u_{\beta}$ be a least-energy solution of
\begin{align*}
\Delta(|\Delta u_{\beta}|^{\frac{1}{\b}-1}\Delta u_{\beta})=|u_{\beta}|^{\a-1}u_\beta\quad \text{ in }\Omega,\qquad u_\beta=\Delta u_\beta=0\quad \text{ on }\partial \Omega.
\end{align*}
Then, up to a subsequence and with $U_\infty$ as in Theorem~\ref{Uq:conv:thm:intro}, $u_\beta\to U_\infty$ in $W^{1,r}(\Omega)$ as $n\to\infty$ for $r\in[1,1^*)$ and $|\Delta u_\beta|_1\to |\Delta U_\infty|_T$. Moreover, if $\Omega$ is the unit ball, then the convergence holds with $U_\infty$ as in Theorem~\ref{ball:thm}.
\end{coro}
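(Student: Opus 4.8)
The plan is to read Corollary~\ref{coro:intro:2} as a reformulation of Theorem~\ref{Uq:conv:thm:intro} (and, on the ball, of Theorem~\ref{ball:thm}) through the reduction-by-inversion correspondence recalled in the introduction: the whole content of the statement is simply that a least-energy solution of the Navier $\tfrac{\b+1}{\b}$-bilaplacian equation is exactly the first component $U_\b$ of a least-energy solution $(U_\b,V_\b)$ of the Hamiltonian system \eqref{hsystem:intro}, after which the convergence is inherited verbatim. So, for $\a\in(0,1^{**}-1)$, I would argue as follows. In the variational framework of \eqref{J:intro}, a least-energy solution $u_\b$ of the displayed equation is a nontrivial critical point of $J_{\a,\b}$ with $J_{\a,\b}(u_\b)=c_{\a,\b}$. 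Setting $V_\b:=-|\Delta u_\b|^{\frac1\b-1}\Delta u_\b$, the reduction-by-inversion method (\cite[Lemma 4.8]{BMT14}, with proofs in \cite[Theorem 1.1]{S08} and \cite[Appendix A]{BSR12}) guarantees that $(u_\b,V_\b)$ is a classical solution of \eqref{hsystem:intro}, and the identity
\[
c_{\a,\b}=\min\{J_{\a,\b}(U):(U,V)\text{ is a nontrivial solution of \eqref{hsystem:intro}}\}
\]
recalled in the introduction then shows that $(u_\b,V_\b)$ is a \emph{least-energy} solution of \eqref{hsystem:intro}. Applying Theorem~\ref{Uq:conv:thm:intro} to $(U_\b,V_\b):=(u_\b,V_\b)$ produces a least-energy solution $U_\infty\in BL_0(\Omega)$ of \eqref{1bi:intro} and a subsequence along which $u_\b\to U_\infty$ in $W^{1,r}_0(\Omega)$ for every $r\in[1,1^*)$ and $|\Delta u_\b|_1\to|\Delta U_\infty|_T$, which is precisely the asserted convergence (convergence in $W^{1,r}_0(\Omega)$ implies the stated convergence in $W^{1,r}(\Omega)$).

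If $\Omega=B_1$, Theorem~\ref{ball:thm} moreover identifies $U_\infty$ through the explicit formula \eqref{k:def} and upgrades the convergence to the full family (no subsequence needed), which gives the last assertion. It remains to cover the endpoint $\a=0$, admitted in Corollary~\ref{coro:intro:2} but not in Theorem~\ref{Uq:conv:thm:intro}: in this case the right-hand sides of the bilaplacian equation and of the second equation of \eqref{hsystem:intro} reduce to the bounded (discontinuous) map $t\mapsto|t|^{-1}t$, so every ingredient behind the proof of Theorem~\ref{Uq:conv:thm:intro} --- the uniform-in-$\b$ control of $c_{\a,\b}$, the a priori bounds and regularity estimates on $(U_\b,V_\b)$, and the passage to the limit --- carries over unchanged, and in fact more comfortably, since the source term of the $V_\b$-equation is then uniformly bounded; one obtains the same conclusion, with $U_\infty$ the corresponding least-energy solution of \eqref{1bi:intro} (and \eqref{k:def} on $B_1$).

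I do not foresee a genuine obstacle here: no new estimate is required, only the dictionary between the two variational formulations. The one point deserving care is the identification of the two notions of ``least-energy solution'' (for the bilaplacian equation and for \eqref{hsystem:intro}), which is why I would make the double characterization of $c_{\a,\b}$ recalled after \eqref{bplap} the hinge of the argument; and, for $\a=0$, a short verification that the proof of Theorem~\ref{Uq:conv:thm:intro} is unaffected by the loss of continuity of the nonlinearity.
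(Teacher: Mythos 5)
Your main argument is exactly the (implicit) proof of the paper: Corollary~\ref{coro:intro:2} is, for $\a>0$, a verbatim translation of Theorem~\ref{Uq:conv:thm:intro} (and Theorem~\ref{ball:thm} on the ball) through the reduction-by-inversion dictionary recalled around \eqref{J:intro}--\eqref{bplap}, and the identification of the two least-energy levels via the double characterization of $c_{\a,\b}$ is the right hinge. The paper offers no separate proof for this corollary precisely because this is all there is to say.

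The one place where the proposal overreaches is the endpoint $\a=0$. You claim the proof of Theorem~\ref{Uq:conv:thm:intro} ``carries over unchanged, and in fact more comfortably,'' but this is not so. First, the proof of Theorem~\ref{Uq:conv:thm} rests on the scaling $U_\b=\Lambda_{1+\frac1\b,\a+1}^{\,\b/(\a\b-1)}u_\b$; for $\a=0$ the exponent $\b/(\a\b-1)=-\b$ diverges, and $\Lambda_{1+\frac1\b,1}^{-\b}$ does not converge to $\Lambda_{1,1}^{1/\a}$ (which is not even defined). Second, and more fundamentally, $\a=0$ corresponds to $q=\a+1=1$, a case which the variational framework of Section~\ref{sec:1-bilaplacian} explicitly excludes: the functional $G$ in \eqref{eq:def_G} fails to be $C^1$ at $q=1$, and Remark~\ref{rmk:convergencePRT} points out that the $q=1$ problem \eqref{eq:1biharmoniclinea} ``does not fall in the framework of our Section~\ref{sec:1-bilaplacian}.'' So the very notion of ``least-energy solution of \eqref{1bi:intro}'' invoked in the corollary's conclusion is not defined for $\a=0$. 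In all likelihood the interval $[0,1^{**}-1)$ in the statement of Corollary~\ref{coro:intro:2} is a typo for $(0,1^{**}-1)$ (note that, in the same sentence, ``$n\to\infty$'' should read ``$\b\to\infty$''), consistent with the hypotheses of Theorem~\ref{Uq:conv:thm:intro} and Corollary~\ref{coro:intro}. You should either restrict to $\a>0$, or, if you want to keep $\a=0$, supply a genuinely new argument replacing the scaling and the nonsmooth critical-point framework (as in \cite{PRT12}), which is not a matter of ``no new estimate is required.''
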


To show Theorems~\ref{Uq:conv:thm:intro} and~\ref{ball:thm} we consider an auxiliary nonlinear eigenvalue problem, which is of independent interest.  To be more precise, let $p,q$ be such that
\begin{equation}\label{eq:lowerboundbeta:intro}
q\geq  1,\qquad p\geq 1, \qquad (N-2p)q<Np,
\end{equation}
and let
\begin{align}\label{alphaq:intro}
    \Lambda_{p,q}:=\inf_{u\in W^{2,p}_\Delta(\Omega)\setminus\{0\}}\frac{|\Delta u|_p}{|u|_q}
    =\inf\left\{|\Delta u|_p:u\in W^{2,p}_\Delta(\Omega),\ |u|_q=1\right\},
\end{align}
where 
\begin{align}\label{def:Xp:intro}
W^{2,p}_\Delta(\Omega):=\big\{u\in W^{1,p}_0(\Omega): \Delta u\in L^p(\Omega)\big\}.
\end{align}
 We summarize in the next theorem our main results regarding this auxiliary problem, since they are of independent interest.
\begin{theorem}\label{thm:nonlineareigenvalue_pq}
Let $\Omega$ be convex or of class $C^{1,\gamma}$ for some $\gamma\in (0,1]$. Then:
\begin{enumerate}
\item[(a)] if $p>1$ and $(N-2p)q<Np$, then $\Lambda_{p,q}$ is achieved by a function $u_{p,q}\in W^{2,p}_\Delta (\Omega)$ with $|u_{p,q}|_q=1$, and
\begin{align*}
\Delta(|\Delta u_{p,q}|^{p-2}\Delta u_{p,q})=\Lambda_{p,q}|u_{p,q}|^{q-2}u_{p,q}\quad \text{ in }\Omega,\qquad u_{p,q}=\Delta u_{p,q}=0\quad \text{ on }\partial \Omega.
\end{align*}
    \item[(b)]  if $p=1$ and $q\in[1,1^{**})$, then
\begin{equation*}
\Lambda_{1,q}= \inf_{u\in BL_0(\Omega)\backslash\{0\}}\frac{|\Delta u|_T}{|u|_q},
\end{equation*}
and $\Lambda_{1,q}$ is achieved by a function $u_{1,q}\in BL_0(\Omega)$ with $|u|_q=1$. Moreover,
\begin{equation*}
\Delta\left(\frac{\Delta u_{1,q}}{|\Delta u_{1,q}|}\right) = \Lambda_{1,q}|u_{1,q}|^{q-2}u_{1,q}\quad \text{in }\Omega,\qquad u_{1,q}=\frac{\Delta u_{1,q}}{|\Delta u_{1,q}|}=0 \text{ on } \partial \Omega.
\end{equation*}
\end{enumerate}
Moreover, for $(p,q)$ satisfying \eqref{eq:lowerboundbeta}, the map
\[
(p,q)\mapsto \Lambda_{p,q}
\]
is continuous  and, for $q\in [1,1^{**})$ and  $(p_n,q_n)\to (1,q)$, given $u_n$ a  minimizer for $\Lambda_{p_n,q_n}$ with $|u_n|_{q_n}=1$, then there is a minimizer $u_{1,q}$ of $\Lambda_{1,q}$, with $|u_1|_q=1$, such that, up to a subsequence:
\[
  \text{$u_n\to u$ in $W^{1,r}_0(\Omega)$ for every $r\in [1,1^*)$ and $|\Delta u_n|_T\to |\Delta u|_T$.}
\]
\end{theorem}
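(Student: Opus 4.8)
The plan is to split the proof into three parts: (a) the case $p>1$, (b) the limiting case $p=1$, and then (c) the continuity and convergence statement; the first two are essentially standard direct-method arguments, while the third is where the real work lies.

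For part (a), I would run the direct method. Take a minimizing sequence $(u_n)$ for the Rayleigh-type quotient $|\Delta u|_p/|u|_q$, normalized so that $|u_n|_q=1$ and $|\Delta u_n|_p\to\Lambda_{p,q}$. By elliptic regularity on a domain which is convex or $C^{1,\gamma}$, the norm $u\mapsto |\Delta u|_p$ is equivalent to the full $W^{2,p}$-norm on $W^{2,p}_\Delta(\Omega)$ (this is exactly the point where the hypothesis on $\Omega$ enters: on such domains the Dirichlet problem $-\Delta u = f$ with $u\in W^{1,p}_0$ has the Calderón–Zygmund estimate $\|u\|_{W^{2,p}}\lesssim \|f\|_p$). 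Hence $(u_n)$ is bounded in $W^{2,p}(\Omega)$, and since $(N-2p)q<Np$ the embedding $W^{2,p}(\Omega)\hookrightarrow L^q(\Omega)$ is compact; so up to a subsequence $u_n\weakto u_{p,q}$ in $W^{2,p}$, $u_n\to u_{p,q}$ strongly in $L^q$, giving $|u_{p,q}|_q=1$ and, by weak lower semicontinuity of $|\Delta\cdot|_p$, $|\Delta u_{p,q}|_p\le \liminf|\Delta u_n|_p=\Lambda_{p,q}$; minimality forces equality, so $u_{p,q}$ is a minimizer. The Euler–Lagrange equation follows from Lagrange multipliers: $u\mapsto |\Delta u|_p^p$ and $u\mapsto|u|_q^q$ are $C^1$ on $W^{2,p}_\Delta(\Omega)$ for $p,q>1$ (and the case $q=1$ with $p>1$ can be handled by noting the minimizer has a.e. sign-definite structure, or directly since the constraint functional is still differentiable away from zero; alternatively one restricts to $q>1$ and passes to the limit $q\downarrow 1$ using part (c)), yielding $\Delta(|\Delta u_{p,q}|^{p-2}\Delta u_{p,q}) = \Lambda_{p,q}|u_{p,q}|^{q-2}u_{p,q}$ weakly, and the Navier boundary conditions are encoded in the function space ($u\in W^{1,p}_0$ gives $u=0$, and $\Delta u\in L^p$ with the weak formulation gives $\Delta u=0$ on $\partial\Omega$ in the appropriate trace sense).

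For part (b), I would again take a minimizing sequence $(u_n)$ in $W^{2,1}_\Delta(\Omega)$ — or directly in $BL_0(\Omega)$, after first checking the two infima coincide, which follows because $W^{2,1}_\Delta(\Omega)$ is dense in $BL_0(\Omega)$ in the appropriate (intermediate-convergence/strict) topology and both $|\Delta\cdot|_T$ and $|\cdot|_q$ are continuous along such approximations. With $|u_n|_q=1$ and $|\Delta u_n|_T\to\Lambda_{1,q}$, the key compactness input is that a bound on $\int_\Omega u_n(-\Delta\varphi)$ over test functions plus the zero boundary trace yields, via the $L^1$-elliptic estimate of \cite[Section 2]{PRT12} (the map $\mu\mapsto u$ solving $-\Delta u=\mu$, $u\in W^{1,r}_0$ for $r<1^*$, is bounded from measures to $W^{1,r}_0$), a uniform bound for $u_n$ in $W^{1,r}_0(\Omega)$ for all $r\in[1,1^*)$. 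Then $u_n\to u_{1,q}$ strongly in $L^q$ (using $q<1^{**}$ and the compactness of $W^{1,r}_0\hookrightarrow L^q$ for $r$ close enough to $1^*$), so $|u_{1,q}|_q=1$, and $|\Delta u_{1,q}|_T\le\liminf|\Delta u_n|_T$ by lower semicontinuity of the total variation under this convergence; minimality gives equality and hence $u_{1,q}\in BL_0(\Omega)$. The Euler–Lagrange relation is obtained from nonsmooth critical point theory (subdifferential calculus for $|\Delta\cdot|_T$, as developed in Section~\ref{sec:1-bilaplacian}): there is $v$ with $|v|_\infty\le1$, $-\Delta v=\Lambda_{1,q}|u_{1,q}|^{q-2}u_{1,q}$, and $\int u_{1,q}(-\Delta v)=|\Delta u_{1,q}|_T$, which is precisely the claimed formulation.

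For part (c), the continuity of $(p,q)\mapsto\Lambda_{p,q}$ and the convergence of minimizers. Lower semicontinuity $\liminf_{(p_n,q_n)\to(p,q)}\Lambda_{p_n,q_n}\ge\Lambda_{p,q}$: take near-minimizers $u_n$ for $\Lambda_{p_n,q_n}$, normalized in $L^{q_n}$, derive uniform bounds (for $(p_n,q_n)\to(1,q)$ this is the $W^{1,r}_0$ bound from \cite{PRT12} as in part (b); one needs Hölder to compare $|\cdot|_{q_n}$ with $|\cdot|_q$ on the bounded domain, and $|\Delta\cdot|_{p_n}$ against $|\Delta\cdot|_T$, using $p_n\to1$), extract a limit $u$, and show $\Lambda_{p,q}\le |\Delta u|_T/|u|_q\le\liminf\Lambda_{p_n,q_n}$ — this limit $u$ is then the minimizer $u_{1,q}$ appearing in the statement, and one upgrades $L^q$-weak to the strong $W^{1,r}_0$ convergence and $|\Delta u_n|_{p_n}\to|\Delta u|_T$ exactly by the lower-semicontinuity-plus-minimality squeeze. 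Upper semicontinuity $\limsup\Lambda_{p_n,q_n}\le\Lambda_{p,q}$: fix a smooth competitor $w\in C^\infty_c(\Omega)$ (or any fixed $W^{2,p}_\Delta$ function), test it in each $\Lambda_{p_n,q_n}$, and use $|\Delta w|_{p_n}\to|\Delta w|_p$ and $|w|_{q_n}\to|w|_q$ (dominated convergence on the bounded domain), then take an infimum over $w$; density of smooth functions in the minimizing space closes the gap. For $(p_n,q_n)$ staying in the region $p_n\ge p_0>1$, the argument is the same but easier, since one works in a fixed-scale $W^{2,p}$ framework with genuine reflexivity.

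\textbf{Main obstacle.} The delicate point is the convergence of minimizers as $p_n\to1$: the functional framework degenerates ($L^{p_n}$ of the Hessian collapses to a total-variation/measure norm, the space stops being reflexive, and the Euler–Lagrange equation changes type from a PDE to a nonsmooth inclusion). Controlling this requires the $L^1$-elliptic theory of \cite{PRT12} to produce compactness in $W^{1,r}_0(\Omega)$ for $r<1^*$ uniformly in $n$, together with careful lower semicontinuity of $|\Delta\cdot|_T$ along a sequence whose Hessians are merely bounded in $L^{p_n}$ with $p_n\downarrow1$ — one must check that such a sequence, if bounded, has $\Delta u_n\weakto\Delta u$ as measures and $|\Delta u|_T\le\liminf\int|\Delta u_n|^{p_n}\,dx$ appropriately rescaled. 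The normalization mismatch between $|\cdot|_{q_n}=1$ and $|\cdot|_q=1$ also needs a small Hölder-continuity-in-the-exponent argument so that the limit is normalized correctly; this is routine but must be stated.
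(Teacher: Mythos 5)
Your proposal matches the paper's proof essentially step for step: direct method with compact embedding and Calder\'on--Zygmund norm equivalence for $p>1$; strict density of $W^{2,1}_\Delta(\Omega)$ in $BL_0(\Omega)$ together with the compact embedding $BL_0(\Omega)\hookrightarrow L^q(\Omega)$ and the nonsmooth Lagrange-multiplier rule for $p=1$; and, for part (c), the H\"older squeeze $|\Delta u_n|_1\le|\Omega|^{(p_n-1)/p_n}|\Delta u_n|_{p_n}$ combined with density of $C^2(\overline\Omega)$ competitors in $W^{2,1}_\Delta(\Omega)$ and lower semicontinuity of $|\Delta\cdot|_T$. The only caveat you flag --- the non-$C^1$ constraint $|u|_1=1$ when $q=1$ --- is also left implicit in the paper, whose proposition on least-energy solutions assumes $q>1$ and refers the $q=1$ case back to Parini--Ruf--Tarsi.
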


We make some comment on the proof of this theorem, as well as its connection with Theorem~\ref{Uq:conv:thm:intro}. Regarding items (a) and (b), if $p>1$, then $W^{2,p}_\Delta(\Omega)=W^{2,p}(\Omega)\cap W^{1,p}_0(\Omega)$, but $W^{2,1}(\Omega)\cap W^{1,1}_0(\Omega)$ is strictly contained in $W^{2,1}_\Delta(\Omega)$.  Furthermore, the space $W^{2,1}(\Omega)$ is not reflexive, and in particular one cannot guarantee that a minimizing sequence has a weakly convergent subsequence.  A consequence of these facts is that $\Lambda_{p,q}$ is achieved in $W^{2,p}_\Delta(\Omega)$ for $p>1$, but $\Lambda_{1,q}$ is not achieved in $W^{2,1}_\Delta(\Omega)$; however, we show that, for $q<1^{**}$, $\Lambda_{1,q}$ is achieved in the larger space $BL_0(\Omega)$ (see Lemma~\ref{alpha1:lem} for more details). This is due to the compact embeddings of $BL_0(\Omega)$ into $L^q(\Omega)$, see Proposition~\ref{compact:prop}. We point out that the case $p=1$, $q=1$ has been previously shown in \cite{PRT12}, and we refer to that paper for the exact notion of solution in this situation.

The advantage of considering \eqref{alphaq:intro} is that a minimizer $u_{p,q}$ for $\Lambda_{p,q}$ yields, up to a multiplication by a constant, a least-energy critical point of \eqref{J:intro} with $p=\frac{\beta+1}{\beta}$ and $q=\alpha+1$; in particular, $(U_\b,V_\b)=(u_{p,q},-|\Delta u_{p,q}|^{p-2}\Delta u_{p,q})$ is a solution of \eqref{hsystem:intro}.  Similarly, a minimizer of $\Lambda_{p,q}$ yields, up to a multiplication by a constant, a least energy solution of \eqref{1bi:intro}, see Proposition~\ref{prop:existence_of_les}.  The continuity of the map $(p,q)\mapsto \Lambda_{p,q}$ proved in Theorem~\ref{thm:nonlineareigenvalue_pq}, and the convergence of the associated minimizers, is the main tool to prove the convergence stated in Theorem~\ref{Uq:conv:thm:intro}.  On the other hand, in particular, it also provides a convergence of the nonlinear value problem $\Lambda_{p,q}$ to the linear one $\Lambda_{1,1}$ (see Remark~\ref{rmk:convergencePRT} for more details). We point out that our work also provides a rigorous meaning to the formal limit
\[
\Delta^2_1 u:=\Delta \left(\frac{\Delta u}{|\Delta u|}\right)=\lim_{p\to 1} \Delta (|\Delta u|^{p-2}\Delta u)=:\lim_{p\to 1} \Delta^2_p.
\]
This complements the work \cite{inftybilaplacian}, where the authors study the behavior of $\Lambda_{p,p}$ as $p\to \infty$, in connection with the $\infty$-bilaplacian.
\smallbreak

While minimizers of $\Lambda_{p,q}$ are well characterized for $p>1$ (see for instance \cite[Section 4]{BMT14} and references therin), this is not the case for $p=1$. In this paper we also provide a characterization of minimizers of $\Lambda_{1,q}$. To be more precise, recall that $G_\Omega(\cdot,\cdot)$ denotes the Dirichlet Green function of the Laplacian in $\Omega$ and that $|\cdot|_q$ denotes the $L^q$-norm.

\begin{theorem}\label{char:thm}
Let $\Omega$ be convex or of class $C^{1,\gamma}$ for some $\gamma\in (0,1]$ and $q\in [1,1^{**}-1)$.  Then,
\begin{align*}
\Lambda_{1,q}(\Omega)=\frac{1}{|G_\Omega(\cdot,x_M)|_q},
\end{align*}
where $x_M\in \Omega$ is such that
\begin{align}\label{xMdef}
    |G_\Omega(\cdot,x_M)|_q=\max_{x\in \overline{\Omega}}|G_\Omega(\cdot,x)|_q.
\end{align}
Moreover, the function
\begin{align}\label{exp:for}
u_1(x)=\frac{G_\Omega(x,x_M)}{|G_\Omega(\cdot,x_M)|_q}\qquad \text{ for }x\in \Omega
\end{align}
achieves $\Lambda_{1,q}$, namely, $u_1\in BL_0(\Omega)$, $|u_1|_q=1$, and $|\Delta u_1|_T=\Lambda_{1,q}$.  If the maximum point $x_M$ is unique, then $u_1$ is, up to a multiplicative constant, the only minimizer achieving  $\Lambda_{1,q}$.

In general, any other possible function $u$ achieving $\Lambda_{1,q}$ is, up to sign, positive in $\Omega$ and $\mu:=-\Delta u$ is a positive Radon measure. 
\end{theorem}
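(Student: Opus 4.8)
The plan is to leverage the Green representation of functions in $BL_0(\Omega)$. Since $\Omega$ is convex or $C^{1,\gamma}$, every $u\in BL_0(\Omega)$ can be written as $u(x)=\int_\Omega G_\Omega(x,y)\,d\mu(y)$, where $\mu:=-\Delta u$ is a finite signed Radon measure on $\Omega$ with total variation $|\mu|(\Omega)=|\Delta u|_T$; this is part of the description of $BL_0(\Omega)$ around \eqref{eq:BL0} (see also Lemma~\ref{alt:lem}), the only delicate point being that a harmonic function lying in $W^{1,1}_0(\Omega)$ must vanish, which is standard on such domains. A preliminary step is to show that $x\mapsto|G_\Omega(\cdot,x)|_q$ is a well-defined continuous function on $\overline\Omega$ that vanishes on $\partial\Omega$: this holds whenever $q<1^{**}$ (in particular in the range $q\in[1,1^{**}-1)$ considered here) by the sharp pointwise bounds for $G_\Omega$ on convex or $C^{1,\gamma}$ domains together with dominated convergence for the interior continuity, and the decay estimate $|G_\Omega(\cdot,x)|_q\to0$ as $\dist(x,\partial\Omega)\to0$. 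Consequently the supremum in \eqref{xMdef} is attained at some interior point $x_M$, with $|G_\Omega(\cdot,x_M)|_q\in(0,\infty)$.

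For the lower bound, fix $u\in BL_0(\Omega)\setminus\{0\}$ and $\mu=-\Delta u$. Using $|u(x)|\le\int_\Omega G_\Omega(x,y)\,d|\mu|(y)$, Minkowski's integral inequality, and $|G_\Omega(\cdot,y)|_q\le|G_\Omega(\cdot,x_M)|_q$ for all $y$, we obtain
\[
|u|_q\ \le\ \Big\|\int_\Omega G_\Omega(\cdot,y)\,d|\mu|(y)\Big\|_q\ \le\ \int_\Omega|G_\Omega(\cdot,y)|_q\,d|\mu|(y)\ \le\ |G_\Omega(\cdot,x_M)|_q\,|\mu|(\Omega)\ =\ |G_\Omega(\cdot,x_M)|_q\,|\Delta u|_T ,
\]
whence $|\Delta u|_T/|u|_q\ge1/|G_\Omega(\cdot,x_M)|_q$ and therefore $\Lambda_{1,q}(\Omega)\ge1/|G_\Omega(\cdot,x_M)|_q$. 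For the reverse inequality, note that $-\Delta G_\Omega(\cdot,x_M)=\delta_{x_M}$, so $G_\Omega(\cdot,x_M)\in W^{1,r}_0(\Omega)$ for every $r<1^*$ and $|\Delta G_\Omega(\cdot,x_M)|_T=\delta_{x_M}(\Omega)=1$; thus $G_\Omega(\cdot,x_M)\in BL_0(\Omega)$ and, after normalization, the function $u_1$ in \eqref{exp:for} satisfies $u_1\in BL_0(\Omega)$, $|u_1|_q=1$ and $|\Delta u_1|_T=1/|G_\Omega(\cdot,x_M)|_q$. This gives $\Lambda_{1,q}(\Omega)\le1/|G_\Omega(\cdot,x_M)|_q$, so equality holds in the formula and $u_1$ achieves $\Lambda_{1,q}(\Omega)$.

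For the structure of minimizers, let $u$ be any minimizer, normalized so that $|u|_q=1$, $|\Delta u|_T=1/|G_\Omega(\cdot,x_M)|_q$, and write $\mu=-\Delta u=\mu^+-\mu^-$. Then the displayed chain of inequalities must be a chain of equalities. Equality in the first inequality forces $|u(x)|=\int_\Omega G_\Omega(x,y)\,d|\mu|(y)$ for a.e.\ $x$; since $G_\Omega>0$ on $\Omega\times\Omega$, the quantities $a(x):=\int_\Omega G_\Omega(x,y)\,d\mu^+(y)$ and $b(x):=\int_\Omega G_\Omega(x,y)\,d\mu^-(y)$ are strictly positive at every $x\in\Omega$ unless the corresponding measure vanishes, and the identity $|a-b|=a+b$ (with $a,b\ge0$) then forces $\mu^+\equiv0$ or $\mu^-\equiv0$. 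After a change of sign, $\mu$ is a positive Radon measure and $u=\int_\Omega G_\Omega(\cdot,y)\,d\mu(y)>0$ in $\Omega$ because $\mu\neq0$, which is the last assertion. If in addition $x_M$ is the unique maximizer in \eqref{xMdef}, equality in the third inequality (now with $\mu\ge0$) forces $\mu$ to be supported on $\{y:\ |G_\Omega(\cdot,y)|_q=|G_\Omega(\cdot,x_M)|_q\}=\{x_M\}$, so $\mu=c\,\delta_{x_M}$ with $c=|\mu|(\Omega)$ and hence $u=c\,G_\Omega(\cdot,x_M)$, i.e.\ $u$ is a multiple of $u_1$.

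The short rigidity argument is not the difficulty; the two main obstacles are the preliminary facts it rests on. First, the Green representation $u=\int_\Omega G_\Omega(\cdot,y)\,d(-\Delta u)(y)$ for every $u\in BL_0(\Omega)$ — equivalently, the triviality of the harmonic component — which uses the boundary regularity of $\Omega$ and should be taken from the characterization of $BL_0(\Omega)$ (Lemma~\ref{alt:lem}, and the compactness in Proposition~\ref{compact:prop} to ensure $|u|_q<\infty$). Second, the continuity up to $\partial\Omega$, with vanishing boundary values, of $x\mapsto|G_\Omega(\cdot,x)|_q$, which requires the sharp integrability and decay estimates for $G_\Omega$ on convex or $C^{1,\gamma}$ domains and is responsible for the restriction on $q$. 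I would isolate both of these as preliminary lemmas, after which the theorem follows exactly as above, in combination with the existence part of Theorem~\ref{thm:nonlineareigenvalue_pq}(b).
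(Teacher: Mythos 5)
Your proof is correct, and it follows the same overall architecture as the paper's (Propositions~\ref{prop1.7-part1}, \ref{pos:prop}, and \ref{prop:uniqueness}): write $u=\int_\Omega G_\Omega(\cdot,y)\,d\mu(y)$ with $\mu=-\Delta u$ a Radon measure, show $|G_\Omega(\cdot,x_M)|_q^{-1}$ is simultaneously an upper and a lower bound for $\Lambda_{1,q}$, and then extract the rigidity statements from the equality case. The two places where you take a genuinely different route are worth noting. For the lower bound $\Lambda_{1,q}\ge |G_\Omega(\cdot,x_M)|_q^{-1}$, you apply Minkowski's integral inequality to the positive kernel $G_\Omega$ against $d|\mu|$, whereas the paper normalizes $|\mu|$ to a probability measure and applies Jensen's inequality to $t\mapsto|t|^q$ followed by Fubini (its chain \eqref{eq1}); both are valid for $q\ge1$ and the Minkowski version is arguably cleaner, since it directly produces the three-step chain whose equality cases you then analyze. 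For the positivity of minimizers, the paper delegates to an external citation (\cite[Proposition 5.6]{PRT12}), while you give a short self-contained argument from the first equality in your Minkowski chain: $|a-b|=a+b$ a.e.\ together with $G_\Omega>0$ forces $\mu^+\equiv0$ or $\mu^-\equiv0$. This makes the rigidity analysis more uniform, since the same chain of equalities yields both the positivity of $\mu$ and, via the third equality, the concentration at $x_M$ when the maximizer is unique; the paper's uniqueness proof (Proposition~\ref{prop:uniqueness}) instead re-derives the relevant equality $\int h\,d\mu=\int h(x_M)\,d\mu$ from its Jensen chain. You also correctly isolate the two supporting facts the argument rests on: the Green representation with trivial harmonic part on convex/$C^{1,\gamma}$ domains (which in the paper is Theorem~\ref{th:equivalence_solutions}, Proposition~\ref{prop:equivalence_solutions}, and Remark~\ref{nico:rmk}), and the continuity and boundary vanishing of $x\mapsto|G_\Omega(\cdot,x)|_q$ (which the paper proves by dominated convergence just before Proposition~\ref{prop1.7-part1}).
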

This is an extension of \cite[Theorem 1.2]{PRT12}, which considered the case $q=1$; in this case, $|G_\Omega(\cdot,x)|_1=\Psi(x)$, where $\Psi$ is the torsion function of $\Omega$.  Note that $\Lambda_{p,q}(\Omega)$ provides the best constant for the continuous (and compact) embedding $W^{2,p}_\Delta(\Omega)\hookrightarrow L^q(\Omega)$ and also for $BL_0(\Omega)\hookrightarrow L^q(\Omega)$.  The explicit value of $\Lambda_{1,q}(B_1)$ is computed in Theorem~\ref{eigenball:thm} (see also Remark~\ref{L11:rmk} regarding $\Lambda_{1,1}(B_1)$). 

As stated in the previous theorem, when the maximum point $x_M$ (see \eqref{xMdef}) is unique, then $\eqref{exp:for}$ is the unique minimizer of $\Lambda_{1,q}$, up to sign and  normalization in $L^q$ sense (see Proposition~\ref{prop:uniqueness} for the proof of this particular statement).  Using Talenti's comparison principle, we can show that $x_M=0$ is a maximum point in the case of the unit ball $B_1$ centered at the origin (Proposition~\ref{uniquexm}), which yields the following result. 
\begin{theorem}\label{eigenball:thm}
The function $G_{B_1}(x,0)$ is a  minimizer of $\Lambda_{1,q}(B_1)$. In particular, for $N\geq 3$ we have that $|x|^{2-N}-1$ achieves $\Lambda_{1,q}(B_1)$ for any $q\in [1,\frac{N}{N-2})$, and 
\begin{align*}
    \Lambda_{1,q}(B_1)=\frac{4\pi^{N/2}}{\Gamma(\frac{N}2-1)}\bigg(
    \frac{\Gamma(\frac{N}{N-2})\Gamma(\frac{N}2+1)}{\pi^{N/2}\Gamma(\frac{N}{N-2}-q)\,\Gamma(q+1)}\bigg)^{\frac1q}.
\end{align*}
\end{theorem}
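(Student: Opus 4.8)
\textbf{Proof strategy for Theorem~\ref{eigenball:thm}.}

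The plan is to invoke Theorem~\ref{char:thm}: for $q\in[1,1^{**}-1)$ the value $\Lambda_{1,q}(B_1)$ equals $1/|G_{B_1}(\cdot,x_M)|_q$ where $x_M$ maximizes $x\mapsto|G_{B_1}(\cdot,x)|_q$ over $\overline{B_1}$, and the function $u_1=G_{B_1}(\cdot,x_M)/|G_{B_1}(\cdot,x_M)|_q$ is a minimizer. So the whole statement reduces to two tasks: first, identifying $x_M=0$ (this is exactly the content of the cited Proposition~\ref{uniquexm}, so I would simply quote it); second, computing $|G_{B_1}(\cdot,0)|_q$ explicitly. Once $x_M=0$ is known, the normalization $u_1$ is a scalar multiple of $G_{B_1}(\cdot,0)$, hence $G_{B_1}(\cdot,0)$ itself is a minimizer (minimizers are defined up to a multiplicative constant since the Rayleigh quotient in \eqref{alphaq:intro} is $0$-homogeneous), and for $N\ge 3$ one has $G_{B_1}(x,0)=c_N(|x|^{2-N}-1)$ for an explicit dimensional constant $c_N$, so $|x|^{2-N}-1$ is also a minimizer; the constant $c_N$ cancels in the quotient and does not affect the value of $\Lambda_{1,q}$.

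The substantive computation is the integral $|G_{B_1}(\cdot,0)|_q^q=\int_{B_1}|G_{B_1}(x,0)|^q\,dx$. Recall that for $N\ge 3$, $G_{B_1}(x,0)=\frac{1}{(N-2)\omega_{N-1}}\big(|x|^{2-N}-1\big)$ where $\omega_{N-1}=\frac{2\pi^{N/2}}{\Gamma(N/2)}$ is the surface measure of the unit sphere. Passing to polar coordinates,
\begin{align*}
\int_{B_1}\big(|x|^{2-N}-1\big)^q\,dx=\omega_{N-1}\int_0^1 r^{N-1}\big(r^{2-N}-1\big)^q\,dr.
\end{align*}
Then I would substitute $t=r^{N-2}$ (so $r=t^{1/(N-2)}$, $dr=\frac{1}{N-2}t^{1/(N-2)-1}\,dt$), which turns this into a Beta integral: after simplification the exponent of $t$ becomes $\frac{N}{N-2}-q-1$, so the integral equals $\frac{1}{N-2}\int_0^1 t^{\frac{N}{N-2}-q-1}(1-t)^q\,dt=\frac{1}{N-2}B\!\big(\tfrac{N}{N-2}-q,\,q+1\big)$, which is finite precisely because $q<\frac{N}{N-2}$ (consistent with the hypothesis $q<1^{**}-1<1^{**}=\frac{N}{N-2}$). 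Using $B(a,b)=\Gamma(a)\Gamma(b)/\Gamma(a+b)$ with $a+b=\frac{N}{N-2}+1=\frac{N}{2(N-2)}\cdot\frac{2(N-1)}{ \, }$... more cleanly $a+b=\frac{2(N-1)}{N-2}$, and plugging back the factor $\frac{1}{(N-2)^q\omega_{N-1}^q}$ from $G_{B_1}^q$ together with the prefactor $\omega_{N-1}$, one collects everything and raises to the power $1/q$ to get $|G_{B_1}(\cdot,0)|_q$; then $\Lambda_{1,q}(B_1)$ is its reciprocal. Matching the resulting expression to the stated formula $\frac{4\pi^{N/2}}{\Gamma(N/2-1)}\big(\frac{\Gamma(N/(N-2))\Gamma(N/2+1)}{\pi^{N/2}\Gamma(N/(N-2)-q)\Gamma(q+1)}\big)^{1/q}$ is a matter of rewriting $\omega_{N-1}$, $(N-2)$, and the Beta function in terms of Gamma functions and simplifying (e.g.\ $(N-2)\Gamma(N/2-1)=2\Gamma(N/2)$ up to the right normalization, and $\Gamma(2(N-1)/(N-2))$ absorbs into $\Gamma(N/(N-2))$ and $\Gamma(N/2+1)$ via the shift identities).

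I do not anticipate a genuine obstacle here: the only non-bookkeeping ingredient, the identification $x_M=0$ via Talenti's comparison, is already established in Proposition~\ref{uniquexm}, and everything else is the explicit Beta-integral evaluation above. The one point requiring a little care is keeping track of the normalizing constant of the Green function and the surface-measure factor so that the dimensional constants cancel correctly and the final Gamma-function expression matches the stated one; I would also note explicitly that the restriction $q\in[1,1^{**}-1)$ in Theorem~\ref{char:thm} forces $q<\frac{N}{N-2}$, guaranteeing convergence of the integral, which is the exact range in which the stated formula is valid.
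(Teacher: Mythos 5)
Your proposal matches the paper's proof exactly: the paper's proof of Theorem~\ref{eigenball:thm} is a one-line citation of Theorem~\ref{char:thm}, Proposition~\ref{uniquexm}, and Lemma~\ref{Fnorm:lem}, and your plan reproduces precisely this chain, with the Beta-integral evaluation you sketch being the content of Lemma~\ref{Fnorm:lem}. One small clarification concerning your last paragraph: the condition ensuring convergence of the Beta integral is $q<\tfrac{N}{N-2}=1^{**}$ (not $q<1^{**}-1$), which is also the interval appearing in Theorem~\ref{eigenball:thm}; the hypothesis $q\in[1,1^{**}-1)$ in the statement of Theorem~\ref{char:thm} appears to be a slip for $q\in[1,1^{**})$, the range under which Proposition~\ref{prop1.7-part1} (on which Theorem~\ref{char:thm} rests) is actually stated and proved.
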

It is interesting to note that the minimizer $|x|^{2-N}-1$ of $\Lambda_{1,q}(B_1)$ is independent of $q$.  Here, the symmetries of the ball play an important role.  However, in more general domains, the concentration point $x_M$ might also depend on $q$ and one would obtain different minimizers for every $q\in [1,\frac{N}{N-2})$, see Remark~\ref{rmk:q} in this regard.   We can also show the following Faber-Krahn-type inequality result.
\begin{prop}\label{faber-krahn}
For any $\Omega\subseteq\R^N$ convex or of class $C^{1,\gamma}$ for some $\gamma\in(0,1]$, and such that $|\Omega|=|B_1|$, it holds
\begin{align*}
\Lambda_{1,q}(B_1)\leq\Lambda_{1,q}(\Omega).
\end{align*}
\end{prop}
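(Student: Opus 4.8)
The plan is to reduce the inequality to a symmetrization comparison of Green functions and then run a Talenti-type argument. By Theorem~\ref{char:thm} (which is available for $q\in[1,1^{**}-1)$, the range we consider here), for every admissible domain $D$ one has $\Lambda_{1,q}(D)=\big(\max_{x\in\overline D}|G_D(\cdot,x)|_q\big)^{-1}$, while Proposition~\ref{uniquexm} and Theorem~\ref{eigenball:thm} give $\Lambda_{1,q}(B_1)=|G_{B_1}(\cdot,0)|_q^{-1}$. Hence the assertion $\Lambda_{1,q}(B_1)\le\Lambda_{1,q}(\Omega)$ is equivalent to the estimate
\[
|G_\Omega(\cdot,x)|_q\le|G_{B_1}(\cdot,0)|_q\qquad\text{for every }x\in\Omega,
\]
under the volume constraint $|\Omega|=|B_1|=\omega_N$; this is all that remains to be shown. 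For $N=1$ it is an elementary computation, so I would focus on $N\ge2$.

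To prove the displayed estimate I would fix $x\in\Omega$, set $w:=G_\Omega(\cdot,x)$, and apply the classical Talenti symmetrization argument to the superlevel sets of $w$ (the same circle of ideas underlying Proposition~\ref{uniquexm}). The function $w$ is positive and real-analytic in $\Omega\setminus\{x\}$, vanishes continuously on $\partial\Omega$, and solves $-\Delta w=\delta_x$. For $t>0$ the set $\Omega_t:=\{w>t\}$ is relatively compact in $\Omega$ and contains $x$, and by Sard's theorem a.e.\ $t>0$ is a regular value of $w$ on $\Omega\setminus\{x\}$, so $\partial\Omega_t$ is a smooth hypersurface. Since $w$ is harmonic away from $x$ and has the fundamental-solution singularity at $x$, the divergence theorem on the annular region between a small sphere around $x$ and $\partial\Omega_t$ yields the flux identity $\int_{\partial\Omega_t}|\nabla w|\,d\mathcal H^{N-1}=1$ for a.e.\ $t>0$. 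Writing $\mu(t):=|\Omega_t|$, the coarea formula gives $-\mu'(t)=\int_{\partial\Omega_t}|\nabla w|^{-1}\,d\mathcal H^{N-1}$, and Cauchy--Schwarz together with the isoperimetric inequality gives, for a.e.\ $t>0$,
\[
\mathcal H^{N-1}(\partial\Omega_t)^2\le\Big(\int_{\partial\Omega_t}|\nabla w|\Big)\Big(\int_{\partial\Omega_t}|\nabla w|^{-1}\Big)=-\mu'(t),\qquad \mathcal H^{N-1}(\partial\Omega_t)\ge N\omega_N^{1/N}\mu(t)^{\frac{N-1}{N}},
\]
hence $-\mu'(t)\ge N^2\omega_N^{2/N}\mu(t)^{\frac{2(N-1)}{N}}$ with $\mu(0^+)=|\Omega|=\omega_N$. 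Carrying out the same computation for $g:=G_{B_1}(\cdot,0)$, whose superlevel sets are concentric balls, turns all inequalities into equalities, so $\nu(t):=|\{g>t\}|$ solves $-\nu'(t)=N^2\omega_N^{2/N}\nu(t)^{\frac{2(N-1)}{N}}$ with $\nu(0^+)=\omega_N$. Since $2(N-1)/N\ge1$, the primitive $s\mapsto\int_s^{\omega_N}\tau^{-2(N-1)/N}\,d\tau$ is strictly decreasing with limit $+\infty$ at $0^+$, and integrating the differential inequality for $\mu$ against the differential equation for $\nu$ yields $\mu(t)\le\nu(t)$ for all $t\ge0$. Consequently the decreasing rearrangements satisfy $w^*\le g^*$ on $[0,\omega_N]$; as $g$ is radially decreasing, $g^*$ represents $g$ on $B_1$, and equimeasurability gives
\[
\int_\Omega w^q=\int_0^{\omega_N}(w^*(s))^q\,ds\le\int_0^{\omega_N}(g^*(s))^q\,ds=\int_{B_1}g^q,
\]
that is, $|G_\Omega(\cdot,x)|_q\le|G_{B_1}(\cdot,0)|_q$. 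Taking the supremum over $x\in\Omega$ and combining with the first step finishes the proof.

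I expect the main obstacle to be the careful justification of the Talenti chain in the presence of the Dirac datum and of the weak boundary regularity. The point-mass source causes no trouble because every step takes place on the sets $\Omega_t\Subset\Omega\setminus\{x\}$, where $w$ is smooth and harmonic, so neither the singularity at $x$ nor the merely Lipschitz boundary $\partial\Omega$ enters the argument; the one genuinely technical ingredient is the absolute continuity of the distribution function $\mu$ together with the validity of the differential inequality for a.e.\ $t$, which is standard in Talenti's theory but needs the usual care near the critical values of $w$. Should one prefer to avoid reproving this, an alternative is to invoke directly the known Talenti comparison for the Green function, $G_\Omega(\cdot,x)^{\#}\le G_{B_1}(\cdot,0)$ whenever $|\Omega|=|B_1|$, from which the $L^q$-bound is immediate. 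Finally, one can read off from the argument that equality in Proposition~\ref{faber-krahn} forces $\Omega$ to be a ball, via the equality cases of the isoperimetric inequality and of Cauchy--Schwarz, although this refinement is not needed here.
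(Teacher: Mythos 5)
Your proof is correct and follows the same overall route as the paper: use Theorem~\ref{char:thm} to rewrite both $\Lambda_{1,q}(\Omega)$ and $\Lambda_{1,q}(B_1)$ as reciprocals of $L^q$-norms of Green functions at their maximizing poles, then reduce the Faber--Krahn inequality to the comparison $|G_\Omega(\cdot,x)|_q\le|G_{B_1}(\cdot,0)|_q$ under the volume constraint. The only real difference is how that comparison is obtained. The paper's Lemma~\ref{talenti} approximates $\delta_x$ by mollifiers $\psi_j(\cdot-x)$, applies Talenti's classical comparison principle \cite{zbMATH03531830} to the resulting smooth Poisson problems, and passes to the limit via continuity of the rearrangement in measure \cite{zbMATH04132888}; this cleanly sidesteps all the technical subtleties surrounding the Dirac datum. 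You instead re-derive the pointwise comparison $G_\Omega(\cdot,x)^\sharp\le G_{B_1}(\cdot,0)$ directly on the Green function via the coarea/Cauchy--Schwarz/isoperimetric chain and an ODE comparison for the distribution functions, exploiting the flux identity $\int_{\partial\Omega_t}|\nabla w|=1$. That argument is valid, and you are right to flag the delicate points (absolute continuity of $\mu$, critical values, the identification of $\mu(0^+)$ with $|\Omega|$ for Lipschitz boundaries); the paper's mollifier route pays a small price (quoting two external theorems) in exchange for avoiding exactly those technicalities. You also note the shortcut of quoting the Green-function Talenti comparison directly, which is in spirit what the paper does via Lemma~\ref{talenti}. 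A small bonus of your approach is that it makes the rigidity statement (equality forces $\Omega$ to be a ball) transparent, though, as you say, that is not claimed in the Proposition.
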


The results in this paper complement other asymptotic characterizations that have been previously studied in the literature for systems and equations. First, we mention the case of the Lane-Emden equation
\begin{align}\label{le:eq}
    -\Delta u_\b = |u_\b|^{\b-1}u_\b\quad \text{ in }\Omega,\qquad u_\b=0\quad \text{ on }\partial \Omega.
\end{align}
Note that \eqref{subcritical:intro} reduces to \eqref{le:eq} when $\a=\b$ and $U_\b=V_\b>0$. If $N\geq 3$, then $0<\b+1<2^*=\frac{2N}{N-2}$ and therefore one cannot consider the asymptotic behavior of solutions as $\b\to \infty.$ However, if $N=2$, then \eqref{le:eq} is subcritical for all $\b>1$ and the asymptotic profile of the solution $u_\b$ is well understood.  The interest in these characterizations started with the seminal works by Ren and Wei \cite{ren1994two,ren1996single}. It is known that least-energy solutions exhibit a single concentration point phenomenon (solutions go to zero locally uniformly outside the concentration point as $\b\to \infty$). Furthermore, these solutions do \emph{not} blow-up, they remain uniformly bounded and $|u_\b|_{\infty}:=\sup_\Omega|u_\b|\to \sqrt{e}$ as $p\to \infty,$ see \cite{G04}.  If $\Omega$ is a ball, then a sharp asymptotic profile is known, namely
\begin{align*}
    \b u_\b\to 4\sqrt{e}\log\frac{1}{|x|}\qquad \text{ in }C^1(\overline{\Omega}\backslash\{0\}) \text{ as }\beta\to\infty,
\end{align*}
see  \cite{G04} (see also \cite{ianni2021sharp} for a similar result regarding nodal solutions).

If $\a=1$, then \eqref{hsystem:intro} reduces to the subcritical Navier bilaplacian equation \begin{align}\label{bi:eq}
    \Delta^2 v_\b = |v_\b|^{\b-1}v_\b\quad \text{ in }\Omega,\qquad v_\b=\Delta v_\b=0\quad \text{ on }\partial \Omega.
\end{align}
When $\Omega\subset \R^4$ is a bounded and smooth domain, an asymptotic analysis as $\b\to \infty$ is done in \cite{AG06}; in particular, it is shown that, up to a subsequence, there is $x_0\in \Omega$ (which is a critical point of the Robin function) such that $\b v_\b\to C G(\cdot,x_0)$ in $C^4(\Omega \backslash \{x_0\})$, where $C>0$ is an explicit constant and $G$ is the Navier Green function of the bilaplacian in $\Omega.$  Note that this is a very different behavior with respect to the one described in Corollary~\ref{coro:intro} for $N\leq 3$ and $\alpha=1$.  In particular, the limit profile in the case $N=4$ is unbounded in $L^\infty(\Omega)$, whereas the one in $N=3$ is a function in $W^{2,2}(\Omega)\cap W_0^{1,2}(\Omega)\cap C^{0,\sigma}(\overline \Omega)$.   
This difference is mainly due to the fact that the Sobolev critical exponent for the bilaplacian is $2^{**}=\frac{2N}{N-4}$ and $N=4$ is the transition between $2^{**}$ being finite or infinite. 

Regarding the Lane-Emden system \eqref{le:eq}, the following results are known. In \cite{guerra2007asymptotic} the author considers the case $\a=\frac{2}{N-2}$ ($N\geq 3$) and $\beta \to \infty$; namely, $(\a,\b)$ approaches the asymptote of the critical hyperbola (portrayed in Figure~\ref{chfig}).  The main result in \cite{guerra2007asymptotic} shows that, for a smooth convex and bounded domain $\Omega\subset \R^N$ with $N\geq 3$, if there is only one concentration point $x_0\in \Omega$, then (up to a subsequence)
\begin{align*}
\frac{U_\b}{\left(\int_\Omega|U_\b|_{\b}^\b\right)^{{2}{N-2}}}\to W\qquad \text{ in }C^2(\Omega \backslash \{x_0\})\text{ as }\b\to \infty,
\end{align*}
where $W$ is a solution of $-\Delta W = G_\Omega^{\frac{2}{N-2}}(x,x_0)$ in $\Omega$ and $W=0$ on $\partial \Omega$ and $G_\Omega$ is the Dirichlet Green function of Laplacian in $\Omega$.  This is also a different behavior compared to the description in Theorems~\ref{Uq:conv:thm:intro} and~\ref{ball:thm}.

For $N=2$, the behavior of positive solutions of \eqref{hsystem:intro} in smooth bounded domains when both $\a$ and $\b$ tend to infinity has also been studied in some particular cases in \cite{KS23,ZHW22}.  In \cite{ZHW22} it is shown that if $\a=\b+\theta_\b$ with $\theta_\b$ and $\theta_\b\to \theta$ as $\b\to \infty$, then there is a finite set of concentration points $S=\{x_1,\ldots,x_k\}\subset \Omega$ such that, up to a subsequence,
\begin{align*}
\b U_\b,\b V_\b \to 8\pi\sqrt{e}\sum_{i=1}^k G_\Omega(x,x_i)\qquad \text{ in }C^2_{loc}(\overline{\Omega}\backslash S).
\end{align*}
The concentration points $x_i$ are also characterized, see \cite{ZHW22}.  On the other hand, in \cite{KS23}, positive solutions of \eqref{hsystem:intro} in planar bounded $C^2$ domains are shown to be uniformly bounded as $\b\to \infty$ whenever 
\begin{align}\label{K}
K^{-1}\a\leq \b\leq K\a\qquad \text{ for some $K>1$.}
\end{align}
In \cite{KS23} it is also shown that \eqref{K} cannot be removed, since it is shown that, if $\Omega$ is a disc and $\a=1$, then any positive solution $(U_\b,V_\b)$ of \eqref{hsystem:intro} satisfies that $C^{-1}\log(\b)\leq |U_\b|_\infty\leq C\log(\b)$ as $\b\to \infty.$

We also mention that the limit profiles of solutions of  \eqref{hsystem:intro} as $(\alpha,\beta)$ goes to a point  $(\a_0,\b_0)$ on the critical hyperbola (namely, $\frac{1}{\a_0+1}+\frac{1}{\b_0+1} = \frac{N-2}{N}$) has been studied in \cite{CK19,G08}.  In this case, both components exhibit a blow-up (concentration) behavior at critical points of the Robin function. Furthermore, a suitable rescaling of this solution converges to a solution of the Lane-Emden system in $\R^N$.  Although these limit profiles are not explicit, their decay rate at infinity are known with precision can be characterized.

\medskip

The paper is organized as follows. In Section~\ref{eigen:sec} we study the existence of minimizers for the auxiliary nonlinear eigenvalue problem \eqref{alphaq:intro}, its link with solutions of \eqref{1bi:intro} and the proof of Theorems~\ref{thm:nonlineareigenvalue_pq} and~\ref{char:thm}.  The proof of Theorem~\ref{Uq:conv:thm:intro} can be found in Section~\ref{sec:Hamiltonian}, while Section~\ref{ball:sec} is devoted to the proof of Theorems~\ref{ball:thm} and~\ref{eigenball:thm} and of Proposition~\ref{faber-krahn}.  Finally, we include a self-contained appendix with several known useful results regarding the space $BL_0(\Omega)$.

\subsection{Notation}\label{sec:notation}
Let $\Omega\subset\R^N$ be a bounded Lipschitz domain of $\R^N$ ($N\geq 3$)
and let $\a,\b\in\R$ be positive and  subcritical, that is,
\begin{equation}\label{subcritical}
\a,\b>0,\qquad \frac{1}{\a+1}+\frac{1}{\b+1} > \frac{N-2}{N},
\end{equation}
see \cite{ClementdeFigueiredoMitidieri,ClementvanderVorst}. Let 
\begin{align*}
    |u|_{t}=\left(\int_\Omega |u|^{t}\right)^\frac{1}{t} \text{ for $t\geq 1$},\qquad |u|_\infty=\sup_{\Omega} |u|.
\end{align*}
For $p\geq 1$, we denote
\begin{align}\label{def:Xp}
W^{2,p}_\Delta(\Omega):=\big\{u\in W^{1,p}_0(\Omega): \Delta u\in L^p(\Omega)\big\}.
\end{align}
Under additional geometrical or regularity assumptions on $\Omega$ (see Lemma~\ref{equiv:lem} below for the details), this is a Banach space when endowed with the norm $|\Delta \cdot|_p$. Moreover, 
\begin{align*}
W^{2,p}_\Delta(\Omega)=W^{2,p}(\Omega)\cap W^{1,p}_0(\Omega)\qquad\text{for }p>1,
\end{align*}
and the norm $|\Delta \cdot |_p$ is equivalent to the standard $W^{2,p}(\Omega)$ one. However, $W^{2,1}(\Omega)\cap W^{1,1}_0(\Omega) \subsetneq W^{2,1}_\Delta(\Omega)$. In \cite[Exercise 5.2, page 83]{PonceBook} one finds an example for $\Omega=B_1$ and $N\geq 3$, of a function $u\in W^{2,1}_\Delta(\Omega)$ which is not in $W^{2,1}(\Omega)$. Moreover, in any domain, it is shown that the inequality $\|u\|_{W^{2,1}}\leq C |\Delta u|_{1}$, with $C>0$ a universal constant, may never hold (see Remark~\ref{rem:inequality} for more details).

Following \cite[Section 2]{PRT12}, given $u\in L^1_{loc}(\Omega)$, we define
\begin{align}\label{def:Tnorm}
|\Delta u|_T=\sup \bigg\{\int_{\Omega} u \Delta \varphi : \varphi \in C^\infty_{c}(\Omega),\ |\varphi|_{\infty} \leq 1\bigg\}
\end{align}
and
\begin{align}\label{def:BL}
BL_{0}(\Omega)&:=\big\{u \in W_{0}^{1,1}(\Omega): |\Delta u|_T <\infty\big\},
\end{align}
which is a Banach space when endowed with the norm $|\Delta \cdot |_T$ (see Lemma~\ref{equiv:lem}).
 
\section{An eigenvalue problem}\label{eigen:sec}

Let $p,q$ be such that
\begin{equation}\label{eq:lowerboundbeta}
q\geq 1,\qquad p\geq 1, \qquad  (N-2p)q<Np.
\end{equation}
 
We study first an auxiliary nonlinear eigenvalue problem. Given $p$ and $q$ satisfying \eqref{eq:lowerboundbeta}, let
\begin{align}\label{alphaq}
    \Lambda_{p,q}:=\inf_{u\in W^{2,p}_\Delta(\Omega)\setminus\{0\}}\frac{|\Delta u|_p}{|u|_q}
    =\inf\left\{|\Delta u|_p:u\in W^{2,p}_\Delta(\Omega),\ |u|_q=1\right\}.
\end{align}
In particular, for $p=1$,
\begin{align}\label{alfas}
\Lambda_{1,q}
:=\inf_{u\in W^{2,1}_\Delta(\Omega)\backslash\{0\}}\frac{|\Delta u|_1}{|u|_q}
=\inf_{u\in W^{2,1}_\Delta(\Omega)\backslash\{0\}}\frac{|\Delta u|_T}{|u|_q},
\end{align}
where we used the fact that $|\Delta u|_1=|\Delta u|_T$ for $u\in W^{2,1}_\Delta(\Omega)$ (see Lemma~\ref{lemma:Deltau}). For the following, recall the definition of $1^{**}$ in \eqref{eq:1*1**}.
\begin{lemma}\label{alpha1:lem}
 Let $\Omega$ be convex or of class $C^{1,\gamma}$ for some $\gamma\in (0,1]$. If $q\in[1,1^{**})$, then
\begin{equation}\label{eq:Lambda_{1,p}}
\Lambda_{1,q}= \inf_{u\in BL_0(\Omega)\backslash\{0\}}\frac{|\Delta u|_T}{|u|_q},
\end{equation}
and $\Lambda_{1,q}$ is achieved in $BL_0(\Omega)$; namely, there is $u_1\in BL_0(\Omega)$ such that $|u_1|_q=1$ and $|\Delta u_1|_T=\Lambda_{1,q}$.
\end{lemma}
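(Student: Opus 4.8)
The plan is to establish the identity \eqref{eq:Lambda_{1,p}} by a two-sided inequality, and then to produce a minimizer in $BL_0(\Omega)$ by a direct method using the compactness properties of $BL_0(\Omega)$. First I would observe that $W^{2,1}_\Delta(\Omega)\subseteq BL_0(\Omega)$ with $|\Delta u|_1 = |\Delta u|_T$ on the smaller space (by Lemma~\ref{lemma:Deltau}), so the infimum on the right-hand side of \eqref{eq:Lambda_{1,p}} is taken over a larger set and is therefore $\leq \Lambda_{1,q}$. For the reverse inequality, I would show that $W^{2,1}_\Delta(\Omega)$ (or even $C^\infty_c(\Omega)$) is ``dense enough'' in $BL_0(\Omega)$ in the relevant sense: given $u\in BL_0(\Omega)$, mollify to get $u_\varepsilon\in C^\infty(\Omega)$ and truncate near the boundary so that the approximants lie in $W^{2,1}_\Delta(\Omega)$, with $|\Delta u_\varepsilon|_1 \to |\Delta u|_T$ (lower semicontinuity gives $\liminf \geq$, and a standard estimate on mollified measures gives $\limsup \leq$) and $u_\varepsilon \to u$ in $W^{1,1}_0(\Omega)$, hence $|u_\varepsilon|_q \to |u|_q$ using the continuous embedding $BL_0(\Omega)\hookrightarrow L^q(\Omega)$ for $q<1^{**}$ (Proposition~\ref{compact:prop}). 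This forces $\Lambda_{1,q} \leq |\Delta u|_T/|u|_q$ for every nonzero $u\in BL_0(\Omega)$, giving equality.

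For the attainment, I would take a minimizing sequence $(u_n)\subset BL_0(\Omega)$ with $|u_n|_q = 1$ and $|\Delta u_n|_T \to \Lambda_{1,q}$. Then $(u_n)$ is bounded in $BL_0(\Omega)$, so by the compact embedding $BL_0(\Omega)\hookrightarrow\hookrightarrow L^q(\Omega)$ for $q\in[1,1^{**})$ (Proposition~\ref{compact:prop}, using here that $q$ is \emph{strictly} subcritical) and the compact embedding into $W^{1,r}_0(\Omega)$ for $r<1^*$, up to a subsequence $u_n \to u_1$ strongly in $L^q(\Omega)$ and in $W^{1,1}_0(\Omega)$, and weakly-$*$ in the measure sense $\Delta u_n \rightharpoonup \Delta u_1$. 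Strong $L^q$ convergence gives $|u_1|_q = 1$, so in particular $u_1\neq 0$; weak-$*$ lower semicontinuity of the total variation gives $|\Delta u_1|_T \leq \liminf_n |\Delta u_n|_T = \Lambda_{1,q}$. Since $u_1$ is admissible in \eqref{eq:Lambda_{1,p}}, the reverse inequality $|\Delta u_1|_T \geq \Lambda_{1,q}|u_1|_q = \Lambda_{1,q}$ holds as well, so $|\Delta u_1|_T = \Lambda_{1,q}$ and $u_1$ is the desired minimizer. The requirement $u_1\in W^{1,1}_0(\Omega)$ built into the definition \eqref{def:BL} of $BL_0(\Omega)$ is inherited from the $W^{1,1}_0$-convergence (the zero boundary trace is preserved).

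The main obstacle is the reverse inequality in \eqref{eq:Lambda_{1,p}}, i.e. the approximation step: one must produce smooth (or at least $W^{2,1}_\Delta$) approximants of an arbitrary $u\in BL_0(\Omega)$ that simultaneously (i) stay in $W^{1,1}_0(\Omega)$ with the correct zero trace, and (ii) have total variation of the Laplacian converging to $|\Delta u|_T$ rather than merely staying bounded — the delicate point being that a crude cutoff near $\partial\Omega$ can create spurious concentrations of $\Delta$ on the boundary layer, so one needs the geometric/regularity hypothesis on $\Omega$ (convex or $C^{1,\gamma}$) to control boundary effects, exactly as in the $q=1$ case treated in \cite{PRT12}. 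I expect this to be handled by the machinery already set up in the appendix on $BL_0(\Omega)$ (density and lower-semicontinuity lemmas), so the proof here should mostly be an assembly of those ingredients together with the strict subcriticality $q<1^{**}$ that yields compactness.
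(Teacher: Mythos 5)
Your plan is essentially the paper's proof: the reverse inequality in \eqref{eq:Lambda_{1,p}} is obtained via the strict-convergence density result (Lemma~\ref{density:lemma}), whose approximants are in $W^{2,1}_\Delta(\Omega)$ by Lemma~\ref{lemma:density}, and attainment follows from the compact embedding $BL_0(\Omega)\hookrightarrow\hookrightarrow L^q(\Omega)$ for $q<1^{**}$ together with lower semicontinuity of $|\Delta\cdot|_T$ (Lemma~\ref{lemma:lsc}). One small inaccuracy: you justify $|u_\varepsilon|_q\to|u|_q$ via $W^{1,1}_0$-convergence plus the \emph{continuous} embedding $BL_0(\Omega)\hookrightarrow L^q(\Omega)$, but $W^{1,1}_0$-convergence only controls $L^q$ for $q\leq 1^*$, and strict convergence is not norm convergence in $BL_0(\Omega)$; the correct step (as in the attainment part and in the paper) is to use boundedness of the approximating sequence in $BL_0(\Omega)$ together with the \emph{compact} embedding into $L^q(\Omega)$, passing to a subsequence.
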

\begin{proof}
We start by checking \eqref{eq:Lambda_{1,p}}. Since $W^{2,1}_\Delta(\Omega)\subset BL_0(\Omega)$, we have that
\[
\Lambda_{1,q}\geq \inf_{u\in BL_0(\Omega)\backslash\{0\}}\frac{|\Delta u|_T}{|u|_q}.
\]
Conversely, given $u\in BL_0(\Omega)\setminus \{0\}$, by Lemma~\ref{density:lemma} there exists ${(u_n)}_{n\in\N}\subset C^\infty(\Omega)\cap C(\overline \Omega)\cap BL_0(\Omega)$ such that $u_n\to u$ strongly in $W^{1,1}_0(\Omega)$ and $|\Delta u_n|_T\to |\Delta u|_T$. By Lemma~\ref{lemma:density}, $\Delta u_n\in L^1(\Omega)$ and so $u_n\in W^{2,1}_\Delta(\Omega)$ for any $n\in\N$. Moreover, since ${(u_n)}_{n\in\N}$ is bounded in $BL_0(\Omega)$ and $q<1^{**}$, by the compact embedding $BL_0(\Omega)\hookrightarrow L^q(\Omega)$ (see Proposition~\ref{compact:prop}), we have $u_n\to u$ in $L^q(\Omega)$, up to a subsequence. Then,
\[
\frac{|\Delta u|_T}{|u|_q}
=\lim_{n\to\infty}\frac{|\Delta u_n|_T}{|u_n|_q}
=\lim_{n\to\infty}\frac{|\Delta u_n|_1}{|u_n|_q} 
\geq 
\Lambda_{1,q}.
\]
Identity \eqref{eq:Lambda_{1,p}} now follows by taking the infimum in $u\in BL_0(\Omega)$. 

Finally, since
\[
\Lambda_{1,q}=\inf\big\{|\Delta u|_T:\ u\in BL_0(\Omega),\ |u|_q=1\big\},
\]
this infimum is actually a minimum by using again the compact embedding $BL_0(\Omega)\hookrightarrow L^q(\Omega)$ combined with the lower semicontinuity of $|\Delta \cdot|_T$ (Lemma~\ref{lemma:lsc}).
\end{proof}

\begin{remark}
    The previous result was known in the case $q=1$, see \cite{PRT12}.
\end{remark}

\begin{lemma}\label{alphap:lem} Let $\Omega$ be convex or of class $C^{1,\gamma}$, for some $\gamma\in (0,1]$. If $p>1$, then $\Lambda_{p,q}$ is achieved by a function $u\in W^{2,p}_\Delta(\Omega)$ with $|u|_q=1$, and
\[
\Delta (|\Delta u|^{q-2}\Delta u)=\Lambda_{p,q}|u|^{q-2}u \text{ in } \Omega,\qquad u=\Delta u=0 \text{ on } \partial \Omega.
\]
\end{lemma}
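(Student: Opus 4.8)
The plan is to prove the existence of a minimizer for $\Lambda_{p,q}$ with $p>1$ by the direct method, and then derive the Euler--Lagrange equation by a standard variational argument. First I would take a minimizing sequence ${(u_n)}_{n\in\N}\subset W^{2,p}_\Delta(\Omega)$ with $|u_n|_q=1$ and $|\Delta u_n|_p\to\Lambda_{p,q}$; such a sequence is bounded in $W^{2,p}_\Delta(\Omega)$, which (thanks to $p>1$ and the hypotheses on $\Omega$) coincides with $W^{2,p}(\Omega)\cap W^{1,p}_0(\Omega)$ with equivalent norm $|\Delta\cdot|_p$, hence a reflexive Banach space. Therefore, up to a subsequence, $u_n\weakto u$ weakly in $W^{2,p}_\Delta(\Omega)$. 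The condition $(N-2p)q<Np$ gives the compact embedding $W^{2,p}_\Delta(\Omega)\hookrightarrow L^q(\Omega)$ (mentioned after \eqref{subcritical:intro}), so $u_n\to u$ strongly in $L^q(\Omega)$, whence $|u|_q=1$ and in particular $u\neq 0$. By weak lower semicontinuity of the norm $v\mapsto|\Delta v|_p$ on the reflexive space, $|\Delta u|_p\leq\liminf_n|\Delta u_n|_p=\Lambda_{p,q}$. Combining with the definition of $\Lambda_{p,q}$ gives $|\Delta u|_p=\Lambda_{p,q}$, so $u$ is a minimizer.

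Next I would obtain the Euler--Lagrange equation. The functional $I(v):=|\Delta v|_p^p=\int_\Omega|\Delta v|^p$ is $C^1$ on $W^{2,p}_\Delta(\Omega)$ (for $p>1$ the map $t\mapsto|t|^p$ is $C^1$ with derivative $p|t|^{p-2}t$, and the Nemytskii operator is continuous from $L^p$ to $L^{p/(p-1)}$), and likewise $K(v):=\int_\Omega|v|^q$ is $C^1$ with derivative $v\mapsto q\int_\Omega|v|^{q-2}v\,\varphi$. Since $u$ minimizes $I$ on the $C^1$ constraint manifold $\{K=1\}$, and $K'(u)\neq 0$ (as $u\neq0$), the Lagrange multiplier rule yields $\lambda\in\R$ with
\begin{align*}
\int_\Omega |\Delta u|^{p-2}\Delta u\,\Delta\varphi\,dx=\lambda\int_\Omega |u|^{q-2}u\,\varphi\,dx\qquad\text{for all }\varphi\in W^{2,p}_\Delta(\Omega).
\end{align*}
Testing with $\varphi=u$ gives $\Lambda_{p,q}^p=|\Delta u|_p^p=\lambda|u|_q^q=\lambda$, so $\lambda=\Lambda_{p,q}^p$; after rescaling $u$ appropriately one identifies the multiplier with $\Lambda_{p,q}$ as written in the statement (or one simply records the equation in the homogeneous form, noting that the statement's normalization $|u|_q=1$ forces the constant to be $\Lambda_{p,q}$ once one accounts for the homogeneity degrees — here I would double-check the exponents, since the left side of the displayed PDE in the statement reads $\Delta(|\Delta u|^{q-2}\Delta u)$, which I would reconcile with $|\Delta u|^{p-2}\Delta u$; this looks like it should be $p$, and I would either flag it or follow the intended reading). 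The weak identity is exactly the distributional meaning of
\begin{align*}
\Delta(|\Delta u|^{p-2}\Delta u)=\Lambda_{p,q}|u|^{q-2}u\quad\text{in }\Omega,
\end{align*}
and the Navier boundary conditions $u=0$ on $\partial\Omega$ come from $u\in W^{1,p}_0(\Omega)$, while $\Delta u=0$ on $\partial\Omega$ is encoded by the fact that test functions range over all of $W^{2,p}_\Delta(\Omega)$ (not just $W^{2,p}_0(\Omega)$), i.e.\ no boundary term on $\Delta u$ appears.

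The main obstacle I anticipate is not the variational machinery itself but the functional-analytic groundwork that makes it run: verifying that for $p>1$ and $\Omega$ convex or $C^{1,\gamma}$, the space $W^{2,p}_\Delta(\Omega)$ with norm $|\Delta\cdot|_p$ is genuinely a reflexive Banach space isomorphic to $W^{2,p}(\Omega)\cap W^{1,p}_0(\Omega)$ — this relies on $L^p$ elliptic regularity / Calderón--Zygmund estimates up to the boundary under those geometric assumptions, which is presumably Lemma~\ref{equiv:lem} and may be invoked directly. Given that, the compactness of $W^{2,p}_\Delta(\Omega)\hookrightarrow L^q(\Omega)$ under $(N-2p)q<Np$ follows from Rellich--Kondrachov (this is the subcriticality condition from \eqref{subcritical:intro} rephrased). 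A secondary, more delicate point is the differentiability of $v\mapsto\int_\Omega|v|^q$ when $1\le q<2$, where $|t|^{q-2}t$ is only Hölder but still the Nemytskii operator maps $L^q\to L^{q/(q-1)}$ continuously, so the Gâteaux derivative exists and is continuous; for $q=1$ the functional is not $C^1$, but the statement restricts to $p>1$ without restricting $q$ from below beyond $q\ge1$, so I would note that for $q=1$ the Euler--Lagrange equation should be interpreted in the subdifferential sense, or simply observe that the interesting application has $q=\alpha+1>1$.
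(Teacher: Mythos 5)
Your proof is correct and follows exactly the same route as the paper's, namely the direct method (reflexivity of $W^{2,p}_\Delta(\Omega)$ for $p>1$ via Lemma~\ref{equiv:lem}, compact embedding into $L^q$ under \eqref{eq:lowerboundbeta}, weak lower semicontinuity of the norm), with the paper dismissing the Euler--Lagrange derivation as ``standard'' while you spell it out. You also correctly flag that the exponent $|\Delta u|^{q-2}\Delta u$ in the displayed equation of the lemma is a typo for $|\Delta u|^{p-2}\Delta u$, consistent with the statement of Theorem~\ref{thm:nonlineareigenvalue_pq}(a).
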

\begin{proof}
Under \eqref{eq:lowerboundbeta},  observe that the embedding $W^{2,p}(\Omega)\hookrightarrow L^q(\Omega)$ 
is always compact. Indeed, in the case $2p<N$, the last condition in \eqref{eq:lowerboundbeta}  is equivalent to $q<\frac{N p}{N-2p}$;  in any case, the compactness follows from Lemma~\ref{lemma:Sobolev_embedding} below. 

For $p>1$, the space $W^{2,p}_\Delta(\Omega)$ endowed with the norm $|\Delta \cdot |_p$ is reflexive (see Lemma~\ref{equiv:lem}), hence the infimum $\Lambda_{p,q}$ is always achieved by a function in $W^{2,p}_\Delta(\Omega)$ by the direct method of Calculus of Variations. The remaining statement also standard.
\end{proof}

\subsection{The 1-bilaplacian Lane-Emden equation}\label{sec:1-bilaplacian}
Let $\Omega$ be a bounded domain which is either convex or with $C^{1,\gamma}$ boundary, for some $\gamma\in (0,1]$. Following the ideas and the notations from \cite{PRT12,s86}, we prove the existence of solutions to the (Navier) 1-bilaplacian equation 
\begin{align}\label{1bi}
    \Delta\left(\frac{\Delta u}{|\Delta u|}\right) = |u|^{q-2}u\quad \text{in }\Omega,\qquad u=\frac{\Delta u}{|\Delta u|}=0 \text{ on } \partial \Omega,
\end{align}
for $q\in (1,1^{**})$, making a connection with the nonlinear eigenvalue $\Lambda_{1,q}$. To give a consistent notion of a solution for \eqref{1bi}, we introduce some notation. 

Consider the functional $E:L^{q}(\Omega)\to [0,\infty]$ given by 
\begin{align}\label{E:def}
E(u):=\begin{cases}
    |\Delta u|_T & \text{ if }u\in BL_0(\Omega),\\
    \infty & \text{ if }u\in L^{q}(\Omega)\backslash BL_0(\Omega).
\end{cases}
\end{align}
Observe that $E$ does not always coincide with $|\Delta \cdot |_T$, since $BL_0(\Omega)$ is strictly contained in the set $\{u
\in L^{q}(\Omega):\ |\Delta u|_T<\infty\}$, because the definition \eqref{def:BL} of $BL_0(\Omega)$ directly encodes homogeneous boundary conditions.

\begin{lemma}
 The functional $E$ is convex and lower-semicontinuous in $L^{q}(\Omega)$ for $q\in(1,1^{**})$.
\end{lemma}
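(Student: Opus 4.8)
The plan is to verify the two properties separately, relying on the structure of $|\Delta\cdot|_T$ as a supremum of linear functionals. For convexity, I would first recall that, by definition \eqref{def:Tnorm}, for every $u\in L^1_{loc}(\Omega)$ the quantity $|\Delta u|_T$ is the pointwise supremum over $\varphi\in C^\infty_c(\Omega)$ with $|\varphi|_\infty\le 1$ of the affine (indeed linear) maps $u\mapsto \int_\Omega u\,\Delta\varphi$. A pointwise supremum of convex (here linear) functions is convex, and adding the constraint $u\in BL_0(\Omega)$ does not destroy this, since $BL_0(\Omega)$ is a linear subspace of $L^q(\Omega)$: if $u_0,u_1\in BL_0(\Omega)$ and $t\in[0,1]$, then $tu_1+(1-t)u_0\in W^{1,1}_0(\Omega)$ and $|\Delta(tu_1+(1-t)u_0)|_T\le t|\Delta u_1|_T+(1-t)|\Delta u_0|_T<\infty$, so $E(tu_1+(1-t)u_0)\le tE(u_1)+(1-t)E(u_0)$; if at least one of $u_0,u_1$ lies outside $BL_0(\Omega)$ the right-hand side is $+\infty$ and the inequality is trivial. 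Hence $E$ is convex on all of $L^q(\Omega)$.

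For lower semicontinuity, I would argue that $E$ is a supremum of $L^q$-continuous functionals, hence $L^q$-lower semicontinuous, provided the test functions $\Delta\varphi$ are admissible as elements of the dual $L^{q'}(\Omega)$. Concretely: for fixed $\varphi\in C^\infty_c(\Omega)$ with $|\varphi|_\infty\le 1$, the map $u\mapsto \int_\Omega u\,\Delta\varphi$ is continuous on $L^q(\Omega)$ since $\Delta\varphi\in C^\infty_c(\Omega)\subset L^{q'}(\Omega)$. Therefore $u\mapsto \sup_{\varphi}\int_\Omega u\,\Delta\varphi$ is $L^q$-lower semicontinuous as a supremum of continuous functions. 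The only point requiring care is to check that this supremum, taken over $u\in L^q(\Omega)$ without the a priori restriction $u\in BL_0(\Omega)$, actually equals $E(u)$; here I would invoke Lemma~\ref{alt:lem} together with the fact (noted right after \eqref{E:def}) that $BL_0(\Omega)=\{u\in L^q(\Omega): u\in W^{1,1}_0(\Omega),\ |\Delta u|_T<\infty\}$ encodes the zero boundary condition — so I should instead realize $E$ as a supremum that also detects whether $u\in W^{1,1}_0(\Omega)$, or else argue directly with sequences. The sequential route is cleanest: given $u_n\to u$ in $L^q(\Omega)$, if $\liminf_n E(u_n)=\infty$ there is nothing to prove; otherwise pass to a subsequence with $E(u_n)$ bounded, so $(u_n)$ is bounded in $BL_0(\Omega)$, and use the compact embedding $BL_0(\Omega)\hookrightarrow L^q(\Omega)$ (Proposition~\ref{compact:prop}) together with the reflexivity/weak-$*$ compactness of the measures $\Delta u_n$ and the boundedness in $W^{1,1}_0(\Omega)$ to extract a limit $\tilde u\in BL_0(\Omega)$ with $\tilde u=u$ a.e.\ and, by lower semicontinuity of $|\Delta\cdot|_T$ (Lemma~\ref{lemma:lsc}), $|\Delta u|_T\le\liminf_n|\Delta u_n|_T$.

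The step I expect to be the only genuine subtlety is the identification of the limit as an element of $BL_0(\Omega)$ — that is, that the $L^q$-limit $u$ of a bounded sequence in $BL_0(\Omega)$ not only satisfies $|\Delta u|_T<\infty$ but also inherits the homogeneous boundary condition encoded in the definition $u\in W^{1,1}_0(\Omega)$. This is handled by the $W^{1,1}_0$ bound that comes for free from the $BL_0$ bound (via the equivalence of norms in Lemma~\ref{equiv:lem}) and the compact embedding $BL_0(\Omega)\hookrightarrow W^{1,r}_0(\Omega)$ for $r\in[1,1^*)$, which keeps the trace zero in the limit. Everything else — convexity, continuity of the individual linear functionals, the supremum formulation — is routine. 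I would therefore present the argument as: (i) $E$ is convex because $BL_0(\Omega)$ is a subspace and $|\Delta\cdot|_T$ is convex on it; (ii) $E$ is $L^q$-lower semicontinuous by the sequential argument above, invoking Proposition~\ref{compact:prop} and Lemma~\ref{lemma:lsc}.
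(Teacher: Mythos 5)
Your proposal is correct and ultimately lands on the same argument as the paper: convexity from $|\Delta\cdot|_T$ being a supremum of linear functionals on the subspace $BL_0(\Omega)$ together with the convention $E=+\infty$ off $BL_0(\Omega)$, and lower semicontinuity via the sequential argument and Lemma~\ref{lemma:lsc}. However, you take a few unnecessary detours. First, your initial ``supremum of $L^q$-continuous functionals'' route is correctly discarded, but for exactly the reason you flag: the supremum $\sup_\varphi\int_\Omega u\,\Delta\varphi$ is finite for some $u\notin BL_0(\Omega)$ (e.g.\ nontrivial harmonic functions not in $W^{1,1}_0(\Omega)$), so it does not represent $E$; this is noted right after the definition \eqref{E:def}. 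Second, once you are in the sequential route, the extraction of a limit $\tilde u$ via compact embeddings of $BL_0(\Omega)$ and ``reflexivity/weak-$*$ compactness of the measures $\Delta u_n$'' is superfluous: you are \emph{given} $u_n\to u$ in $L^q(\Omega)$, hence in $L^1(\Omega)$ (since $\Omega$ is bounded), and Lemma~\ref{lemma:lsc} is stated precisely so that these two facts together with $\sup_n|\Delta u_n|_T<\infty$ already yield $u\in BL_0(\Omega)$ and $|\Delta u|_T\le\liminf_n|\Delta u_n|_T$, with no further compactness input. In particular the worry about the homogeneous boundary condition surviving the limit is already absorbed into the statement of Lemma~\ref{lemma:lsc}, so invoking Lemma~\ref{equiv:lem} and $BL_0(\Omega)\hookrightarrow W^{1,r}_0(\Omega)$ for that purpose is redundant here. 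The paper's proof is simply: pass to a subsequence along which $\liminf E(u_n)$ is attained and finite, note $u_n\to u$ in $L^1(\Omega)$, and apply Lemma~\ref{lemma:lsc}.
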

\begin{proof}
The convexity follows from the definition of $E$ and the convexity of the functional $|\Delta\cdot|_T$. As for the lower-semicontinuity, let $(u_n)\subset L^{q}(\Omega)$ be a sequence such that $u_n\to u$ in $L^{q}(\Omega)$; in particular, the convergence takes place also in $L^1(\Omega)$. Then, either $\liminf E(u_n)=\infty$ or, on the contrary, we may consider $(u_n') \subset BL_0(\Omega)$, a subsequence of $(u_n)$, such that $\liminf E(u_n)= \liminf |\Delta u'_n|_T<\infty$. Then, by Lemma~\ref{lemma:lsc}, we have $u\in BL_0(\Omega)$ and $E(u)=|\Delta u|_T\leq \liminf |\Delta u_n|_T$. 
\end{proof}
Recall that the subdifferential of $E$ at $u\in L^{q}(\Omega)$ (see, for example, \cite{s86} and \cite[Proposition 4.23]{KS07}) is given by
\begin{align}
    \partial E(u) &:= 
    \bigg\{f\in L^{q'}(\Omega)=L^{\frac{q}{q-1}}(\Omega):E(\varphi)\geq E(u)+\int_\Omega 
    f(\varphi-u)\text{ for any }\varphi\in L^{q}(\Omega)\bigg\},
\end{align}

\begin{prop}\label{prop5p2}Let $u\in BL_0(\Omega)$. Then $f\in \partial E(u)$ if and only if there is $v\in W^{1,1}_0(\Omega)\cap L^\infty(\Omega)$ such that $|v|_\infty\leq 1$, $-\Delta v=f\in L^{\frac{q}{q-1}}(\Omega)$ in the sense of distributions, and $E(u)=-\int_\Omega u\,\Delta v.$
\end{prop}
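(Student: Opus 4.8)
\textbf{Proof plan for Proposition~\ref{prop5p2}.}

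The plan is to characterize the subdifferential $\partial E(u)$ by a duality argument, using the fact that $|\Delta\cdot|_T$ is a support-function-type functional whose convex conjugate is (an indicator of) a constrained set. First I would establish the ``if'' direction, which is the easier one. Suppose $v\in W^{1,1}_0(\Omega)\cap L^\infty(\Omega)$ with $|v|_\infty\le 1$, $-\Delta v=f\in L^{q/(q-1)}(\Omega)$ distributionally, and $E(u)=-\int_\Omega u\,\Delta v$. For an arbitrary $\varphi\in L^q(\Omega)$: if $\varphi\notin BL_0(\Omega)$ then $E(\varphi)=\infty$ and the subgradient inequality is trivial; if $\varphi\in BL_0(\Omega)$, then by the density Lemma~\ref{density:lemma} I approximate $\varphi$ by smooth functions $\varphi_n\in C^\infty(\Omega)\cap C(\overline\Omega)\cap BL_0(\Omega)$ with $\varphi_n\to\varphi$ in $W^{1,1}_0(\Omega)$ and $|\Delta\varphi_n|_T\to|\Delta\varphi|_T$, and also $\varphi_n\to\varphi$ in $L^q(\Omega)$ along a subsequence (compact embedding, Proposition~\ref{compact:prop}). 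For smooth $\varphi_n$ one integrates by parts to get $\int_\Omega f\varphi_n=\int_\Omega(-\Delta v)\varphi_n=\int_\Omega(-\Delta\varphi_n)v\le|\Delta\varphi_n|_1\,|v|_\infty\le|\Delta\varphi_n|_T$ (using $|\Delta\varphi_n|_1=|\Delta\varphi_n|_T$ from Lemma~\ref{lemma:Deltau}); passing to the limit gives $\int_\Omega f\varphi\le|\Delta\varphi|_T=E(\varphi)$. Combining with $E(u)=-\int_\Omega u\,\Delta v=\int_\Omega fu$ yields $E(\varphi)\ge E(u)+\int_\Omega f(\varphi-u)$, so $f\in\partial E(u)$.

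For the ``only if'' direction, assume $f\in\partial E(u)$ with $f\in L^{q/(q-1)}(\Omega)$. The subgradient inequality $E(\varphi)\ge E(u)+\int_\Omega f(\varphi-u)$ for all $\varphi\in L^q(\Omega)$, tested against $\varphi=tw$ for $w\in BL_0(\Omega)$ and $t>0$, together with the $1$-homogeneity of $E$ on $BL_0(\Omega)$, forces (letting $t\to\infty$ and $t\to0^+$) both $\int_\Omega fw\le E(w)=|\Delta w|_T$ for all $w\in BL_0(\Omega)$ and $\int_\Omega fu=E(u)=|\Delta u|_T$. So the whole content of the ``only if'' part is: \emph{if $f\in L^{q/(q-1)}(\Omega)$ satisfies $\int_\Omega fw\le|\Delta w|_T$ for all $w\in BL_0(\Omega)$, then there exists $v\in W^{1,1}_0(\Omega)\cap L^\infty(\Omega)$ with $|v|_\infty\le1$ and $-\Delta v=f$ distributionally.} I would produce such a $v$ by solving the linear Dirichlet problem $-\Delta v=f$ in $\Omega$, $v=0$ on $\partial\Omega$: since $f\in L^{q/(q-1)}(\Omega)$ with $q<1^{**}$, elliptic regularity (or the Green representation $v(x)=\int_\Omega G_\Omega(x,y)f(y)\,dy$, with the integrability of $G_\Omega$ following exactly as in the analysis behind Theorem~\ref{char:thm}) gives $v\in W^{1,1}_0(\Omega)$ and in fact $v\in C(\overline\Omega)$ with $v=0$ on the boundary. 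The remaining point is the bound $|v|_\infty\le1$; this is where the hypothesis $\int_\Omega fw\le|\Delta w|_T$ must be used. I would test with $w=G_\Omega(\cdot,x_0)/|G_\Omega(\cdot,x_0)|_q\cdot|G_\Omega(\cdot,x_0)|_q$, i.e.\ $w=G_\Omega(\cdot,x_0)\in BL_0(\Omega)$ for a fixed $x_0\in\Omega$ (this membership is part of the content of Theorem~\ref{char:thm}): then $|\Delta w|_T=1$ (the total variation of $-\Delta G_\Omega(\cdot,x_0)=\delta_{x_0}$) and $\int_\Omega fw=\int_\Omega f(y)G_\Omega(y,x_0)\,dy=v(x_0)$, so the hypothesis gives $v(x_0)\le1$; applying the same to $-w$ gives $v(x_0)\ge-1$, and since $x_0$ is arbitrary, $|v|_\infty\le1$. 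Finally $E(u)=|\Delta u|_T=\int_\Omega fu=\int_\Omega(-\Delta v)u=-\int_\Omega u\,\Delta v$, completing the proof.

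The main obstacle I anticipate is the careful handling of the integration-by-parts / pairing $\int_\Omega fw=-\int_\Omega w\,\Delta v=-\int_\Omega v\,\Delta w$ when $w\in BL_0(\Omega)$ is merely a $W^{1,1}_0$ function whose Laplacian is a Radon measure and $v\in W^{1,1}_0(\Omega)\cap L^\infty(\Omega)$ is only a weak/Green-potential solution: the identity $\int_\Omega v\,d(\Delta w)=\int_\Omega w\,\Delta v$ needs justification, which I would obtain via the density Lemma~\ref{density:lemma} for $BL_0(\Omega)$ and approximation, with uniform control coming from $|v|_\infty\le1$ and $|\Delta w_n|_1\to|\Delta w|_T$, plus the $L^q$--$L^{q'}$ pairing for the right-hand side. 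A secondary technical point is ensuring that the Green potential $v=\int_\Omega G_\Omega(\cdot,y)f(y)\,dy$ indeed lands in $W^{1,1}_0(\Omega)$ with the correct (vanishing) boundary trace and is the distributional solution of $-\Delta v=f$; this is standard for $f\in L^{q/(q-1)}$ under the stated regularity of $\Omega$ and the restriction $q<1^{**}$, and can be quoted from the appendix results on $BL_0(\Omega)$ and from classical potential theory.
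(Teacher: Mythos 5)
Your proposal is sound, and for the harder (``only if'') direction it is a genuinely different and more concrete argument than the one the paper invokes. The paper simply cites \cite{PRT12} (in turn modeled on \cite[Proposition~4.23]{KS07}), which handles the identification of the set $\{f: \int_\Omega f w\le E(w)\ \forall w\}$ via the abstract machinery of convex duality and Hahn--Banach type extension, the standard route for $1$-homogeneous constrained functionals. You instead observe that the scalar nature of the ``dual variable'' $v$ (as opposed to the vector field $z$ arising in the $1$-Laplacian case) lets you solve $-\Delta v=f$ directly and extract the $L^\infty$ bound $|v|_\infty\le 1$ by testing the subgradient inequality against $\pm G_\Omega(\cdot,x_0)\in BL_0(\Omega)$, using $|\Delta G_\Omega(\cdot,x_0)|_T=|\delta_{x_0}|(\Omega)=1$ and the Green representation $v(x_0)=\int_\Omega G_\Omega(x_0,y)f(y)\,dy$. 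This is a nice shortcut that the paper does not take, and it is entirely consistent with the appendix results (Remark~\ref{nico:rmk}, Lemma~\ref{alt:lem}). What the abstract duality approach buys is independence from the explicit Green's function; what yours buys is transparency and a self-contained argument within the paper's own toolkit.

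One technical point you flag but should resolve more crisply: in the ``if'' direction, the pairing $\int_\Omega f\varphi=-\int_\Omega v\,d(\Delta\varphi)$ for $\varphi\in BL_0(\Omega)$ is most cleanly justified not by Green's second identity on the smooth approximants $\varphi_n$ (which vanish on $\partial\Omega$ only in the trace sense and whose normal derivatives are not controlled) but by the same Green/Fubini mechanism you already use in the other direction. Writing $\varphi(x)=\int_\Omega G_\Omega(x,y)\,d\mu(y)$ with $\mu=-\Delta\varphi$ and $v(x)=\int_\Omega G_\Omega(x,y)f(y)\,dy$, Fubini (justified since $\sup_{y\in\overline\Omega}|G_\Omega(\cdot,y)|_q<\infty$ as $q<1^{**}$, $f\in L^{q'}$, and $|\mu|(\Omega)<\infty$) gives
\[
\int_\Omega f\varphi=\int_\Omega v\,d\mu\le |v|_\infty\,|\mu|(\Omega)\le |\Delta\varphi|_T=E(\varphi),
\]
which is exactly the estimate you want, with no appeal to integration by parts on $\varphi_n$. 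This removes the only soft spot in the write-up.
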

\begin{proof}
The proof is basically the same as in \cite[Proposition 5.2]{PRT12} (which, in turn, follows closely the proof of \cite[Proposition 4.23]{KS07}).  The only change is that the functional $E$ in \cite[Proposition 5.2]{PRT12} is considered over $L^{1^*}(\Omega)$, but exactly the same proof holds if $E$ is considered instead on $L^{q}(\Omega)$, noting that $q'=\frac{q}{q-1}$. 
\end{proof}

Next, let $G:L^{q}(\Omega)\to \R$ be given by
\begin{align}\label{eq:def_G}
    G(u):=\frac{1}{q}\int_\Omega |u|^{q}.
\end{align}
Since $q>1$, we have that $G$ is convex, of class $C^1$,
\begin{align}
    G'(u)\varphi =
    \int_\Omega|u|^{q-2}u\,\varphi
    \quad\text{for any }\varphi\in L^{q}(\Omega), \qquad \text{ and }\qquad
    \partial G(u) = \big\{|u|^{q-2}u\big\}, \label{Gsub}
\end{align}
see \cite[Proposition 4.23]{KS07}.  Now we define the Euler-Lagrange functional associated with \eqref{1bi} given by
\begin{align}\label{Phi:def}
\Phi:L^{q}(\Omega)\to (-\infty,\infty],\qquad \Phi(u)=E(u)-G(u).    
\end{align}
Observe that its \emph{effective domain}, that is, the set of elements in the domain at which $E$ is finite, is $BL_0(\Omega)$. Following \cite{s86}, we say that $u\in BL_0(\Omega)$ is a critical point of $\Phi$ if
\[
G'(u)\in \partial E(u),
\]
that is, if 
\[
|\Delta f|_T\geq |\Delta u|_T+\int_\Omega |u|^{q-2}u (f-u) \qquad \text{for every }f\in L^\frac{q}{q-1}(\Omega).
\]

\begin{prop}
    Given $u\in BL_0(\Omega)$, we have that $u$ is a critical point of $\Phi$ if and only if
    \begin{equation}\label{v:def}
\begin{cases}\text{there is $v\in W^{1,1}_0(\Omega)\cap L^\infty(\Omega)$ such that} \\
|v|_\infty\leq 1, \quad -\Delta v = |u|^{q-2}u \text{ in the sense of distributions in }\Omega, \quad \text{ and }\quad  -\int_\Omega u\Delta v = |\Delta u|_T.
\end{cases}
\end{equation}
\end{prop}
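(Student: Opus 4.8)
The plan is to unwind the definition of ``critical point'' via the characterization of $\partial E$ already obtained in Proposition~\ref{prop5p2}, together with the formula \eqref{Gsub} for $\partial G$. Recall that $u\in BL_0(\Omega)$ is by definition a critical point of $\Phi$ precisely when $G'(u)\in\partial E(u)$, and since $q>1$ we have $G'(u)=|u|^{q-2}u\in L^{q/(q-1)}(\Omega)$. So the statement to prove is really the equivalence
\[
|u|^{q-2}u\in\partial E(u)\quad\Longleftrightarrow\quad \eqref{v:def}.
\]

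\textbf{Proof of the equivalence.} Apply Proposition~\ref{prop5p2} with the specific choice $f=|u|^{q-2}u$. It asserts that $f\in\partial E(u)$ if and only if there is $v\in W^{1,1}_0(\Omega)\cap L^\infty(\Omega)$ with $|v|_\infty\leq 1$, $-\Delta v=f$ in the distributional sense, and $E(u)=-\int_\Omega u\,\Delta v$. Since $u\in BL_0(\Omega)$ we have $E(u)=|\Delta u|_T$, so this is exactly the system \eqref{v:def} once we substitute $f=|u|^{q-2}u$. Hence $u$ is a critical point of $\Phi$, i.e. $|u|^{q-2}u\in\partial E(u)$, if and only if \eqref{v:def} holds. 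That completes the argument.

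There is essentially no obstacle here: the content has all been isolated into Proposition~\ref{prop5p2} (the hard duality/integration-by-parts step) and into the elementary differentiability of $G$ on $L^q$. The only point requiring a word of care is making sure $G'(u)$ indeed lands in the dual space $L^{q/(q-1)}(\Omega)$ so that Proposition~\ref{prop5p2} is applicable with $f=G'(u)$ — but this is immediate from $|u|^{q-2}u$ having $L^{q/(q-1)}$-norm equal to $|u|_q^{q-1}$, which is finite since $u\in BL_0(\Omega)\subset L^q(\Omega)$ by the embedding $BL_0(\Omega)\hookrightarrow L^q(\Omega)$ (Proposition~\ref{compact:prop}), valid because $q<1^{**}$. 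So the proof is a two-line invocation of the preceding proposition.

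In writing it up I would phrase it as: ``By definition, $u$ is a critical point of $\Phi$ iff $G'(u)\in\partial E(u)$. By \eqref{Gsub}, $G'(u)=|u|^{q-2}u$, which belongs to $L^{q/(q-1)}(\Omega)$ since $u\in L^q(\Omega)$. Applying Proposition~\ref{prop5p2} with $f=|u|^{q-2}u$ and recalling $E(u)=|\Delta u|_T$ for $u\in BL_0(\Omega)$ yields exactly \eqref{v:def}.'' I expect the real ``work'' of this subsection to have been front-loaded into Proposition~\ref{prop5p2}, so the present statement is a corollary rather than an independent result.
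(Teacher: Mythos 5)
Your argument is exactly the paper's: the proposition is deduced directly from Proposition~\ref{prop5p2} (applied with $f=|u|^{q-2}u$), the formula \eqref{Gsub} for $\partial G$, the definition of critical point, and the identity $E(u)=|\Delta u|_T$ for $u\in BL_0(\Omega)$. The paper states this as a one-line consequence, so your write-up is a fully aligned, slightly more explicit version of the same proof.
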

\begin{proof}
This is a direct consequence of Proposition~\ref{prop5p2} and the definition of critical point.
\end{proof}

\begin{definition} We say that $u\in BL_0(\Omega)$ is a solution to \eqref{1bi} if it is a critical point of $\Phi$ (or, equivalently, if \eqref{v:def} holds true. We say that $u\in BL_0(\Omega)\setminus\{0\}$ is a least energy solution to \eqref{1bi} if $u$ is a solution and
\[
\Phi(u)=c_q:=\min\{\Phi(w):\ w \text{ is a nontrivial solution of } \eqref{1bi}\}.
\]
\end{definition}

\begin{remark}
The function $v$ in \eqref{v:def} gives a consistent meaning to the quotient $-\Delta u/|\Delta u|$.  Observe that $BL_0(\Omega)
 \hookrightarrow L^{q}(\Omega)$ for $q\in[1,1^{**})$, see Proposition~\ref{compact:prop}. Therefore, the integral $\int_\Omega u\,\Delta v$ is well defined.
\end{remark}

\begin{remark}
For each $\lambda>0$, one can define analogously the notion of solution and least energy solution for the 1-Biharmonic equation $\Delta \left(\frac{\Delta u}{|\Delta u|} \right)=\lambda|u|^{q-2}u$ with Navier boundary conditions.
\end{remark}

\begin{prop}\label{prop:existence_of_les} For $q\in (1,1^{**})$, let $u_1\in BL_0(\Omega)\setminus\{0\}$ achieve $\Lambda_{1,q}$  and $|u_1|_q=1$. Then,
\begin{equation}\label{eq:equation_for_constrained_cpt}
\Delta\left(\frac{\Delta u_1}{|\Delta u_1|}\right) = \Lambda_{1,q}|u_1|^{q-2}u_1\quad \text{in }\Omega,\qquad u_1=\frac{\Delta u_1}{|\Delta u_1|}=0 \text{ on } \partial \Omega.
\end{equation}
Moreover,  $u=\Lambda_{1,q}^{\frac{1}{q-1}}u_1$ is a least energy solution of \eqref{1bi} and
\begin{align*}
\min_{v\in L^{q}(\Omega)}\Phi(v)=\frac{q-1}{q}\Lambda_{1,q}^\frac{q}{q-1}
\end{align*}
for $\Phi$ as in \eqref{Phi:def}.
\end{prop}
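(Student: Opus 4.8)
The plan is to show the two claims (that $u_1$ solves \eqref{eq:equation_for_constrained_cpt} as a constrained critical point, and that its rescaling is a least-energy solution of \eqref{1bi} with the stated energy value) essentially by a Lagrange-multiplier argument in the nonsmooth setting, followed by a homogeneity computation. First I would establish \eqref{eq:equation_for_constrained_cpt}. Since $u_1$ minimizes $|\Delta u|_T$ over the constraint $\{|u|_q=1\}$ in $BL_0(\Omega)$, and since $E$ is convex l.s.c.\ with effective domain $BL_0(\Omega)$ while $G$ is $C^1$, I would argue that $\Lambda_{1,q}G'(u_1)\in\partial E(u_1)$. Concretely: for any $w\in BL_0(\Omega)$ with $w\neq 0$, the function $w/|w|_q$ is admissible, so $|\Delta w|_T/|w|_q\geq \Lambda_{1,q}=|\Delta u_1|_T$; testing with $w=u_1+t\varphi$ for small $t$ and $\varphi\in BL_0(\Omega)$, differentiating $t\mapsto |\Delta(u_1+t\varphi)|_T/|u_1+t\varphi|_q$ from the right at $t=0$ using convexity of $E$ and differentiability of $G$, one gets that the one-sided directional derivative of $E$ at $u_1$ dominates $\Lambda_{1,q}\int_\Omega|u_1|^{q-2}u_1\varphi$ for all $\varphi$; a standard convex-analysis lemma then upgrades this to $\Lambda_{1,q}|u_1|^{q-2}u_1\in\partial E(u_1)$. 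By Proposition~\ref{prop5p2} (applied with $f=\Lambda_{1,q}|u_1|^{q-2}u_1\in L^{q/(q-1)}(\Omega)$, which lies in that space since $q<1^{**}$ ensures $|u_1|^{q-2}u_1\in L^{q'}$), there is $v\in W^{1,1}_0(\Omega)\cap L^\infty(\Omega)$ with $|v|_\infty\leq 1$, $-\Delta v=\Lambda_{1,q}|u_1|^{q-2}u_1$ distributionally, and $E(u_1)=-\int_\Omega u_1\Delta v$; this is exactly \eqref{eq:equation_for_constrained_cpt}.

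Next I would pass from the constrained problem to \eqref{1bi} by scaling. Set $u=\Lambda_{1,q}^{1/(q-1)}u_1=:\mu u_1$. Using $1$-homogeneity of $|\Delta\cdot|_T$ and $(q-1)$-homogeneity of $w\mapsto|w|^{q-2}w$, the subdifferential $\partial E$ is $0$-homogeneous, so $\Lambda_{1,q}|u_1|^{q-2}u_1=\mu^{-(q-1)}\Lambda_{1,q}|u|^{q-2}u=|u|^{q-2}u$ by the choice of $\mu$, and $\partial E(u)=\partial E(u_1)\ni\Lambda_{1,q}|u_1|^{q-2}u_1=|u|^{q-2}u=G'(u)$, so $u$ is a critical point of $\Phi$, i.e.\ a solution of \eqref{1bi}. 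For the energy: since $f\in\partial E(u)$ with $f=G'(u)$, convexity of $E$ and the Euler relation $E(u)=\int_\Omega fu$ (taking $\varphi=0$ and $\varphi=2u$ in the subdifferential inequality, or directly from $E(u)=-\int u\Delta v$ with $-\Delta v=f$) give $E(u)=\int_\Omega|u|^{q}=\mu^q|u_1|_q^q=\mu^q=\Lambda_{1,q}^{q/(q-1)}$, hence $\Phi(u)=E(u)-\tfrac1q\int|u|^q=(1-\tfrac1q)\Lambda_{1,q}^{q/(q-1)}=\tfrac{q-1}{q}\Lambda_{1,q}^{q/(q-1)}$.

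It then remains to check minimality, i.e.\ that $\Phi(u)\leq\Phi(w)$ for every nontrivial solution $w$ of \eqref{1bi}, and that the displayed formula for $\min_{v\in L^q(\Omega)}\Phi$ holds. For any nontrivial solution $w$, there is by \eqref{v:def} some $v_w$ with $|v_w|_\infty\leq1$ and $|\Delta w|_T=-\int_\Omega w\Delta v_w=\int_\Omega|w|^{q-2}w\,w=\int_\Omega|w|^q$; thus $\Phi(w)=\tfrac{q-1}{q}\int_\Omega|w|^q$. On the other hand $w\neq0$ is admissible for $\Lambda_{1,q}$, so $|\Delta w|_T\geq\Lambda_{1,q}|w|_q$, i.e.\ $\int_\Omega|w|^q\geq\Lambda_{1,q}|w|_q=\Lambda_{1,q}(\int_\Omega|w|^q)^{1/q}$, giving $(\int_\Omega|w|^q)^{1-1/q}\geq\Lambda_{1,q}$, hence $\int_\Omega|w|^q\geq\Lambda_{1,q}^{q/(q-1)}$ and $\Phi(w)\geq\tfrac{q-1}{q}\Lambda_{1,q}^{q/(q-1)}=\Phi(u)$, with equality at $w=u$. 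Finally, to get that this is the global minimum of $\Phi$ over all of $L^q(\Omega)$ (not just over solutions), I would minimize $\Phi$ directly: for arbitrary $w\in BL_0(\Omega)$ (outside, $\Phi=+\infty$), writing $t=|w|_q$ and using $|\Delta w|_T\geq\Lambda_{1,q}t$ gives $\Phi(w)\geq\Lambda_{1,q}t-\tfrac1q t^q=:g(t)$, and $\min_{t\geq0}g(t)=\tfrac{q-1}{q}\Lambda_{1,q}^{q/(q-1)}$ is attained at $t=\Lambda_{1,q}^{1/(q-1)}$, which matches the value at $u$; so the infimum over $L^q$ equals $\tfrac{q-1}{q}\Lambda_{1,q}^{q/(q-1)}$ and is attained at $u$. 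The main obstacle I anticipate is the rigorous justification of the Lagrange-multiplier step in step one — differentiating the quotient $|\Delta(u_1+t\varphi)|_T/|u_1+t\varphi|_q$ and converting the resulting one-sided inequality into the subdifferential inclusion $\Lambda_{1,q}|u_1|^{q-2}u_1\in\partial E(u_1)$ requires care because $E$ is only one-sidedly directionally differentiable; I would handle this either by invoking a standard nonsmooth Lagrange multiplier rule (e.g.\ from \cite{s86} or \cite{KS07}) or by the elementary convexity argument that for all $w$, $E(w)\geq\Lambda_{1,q}|w|_q\geq\Lambda_{1,q}\big(|u_1|_q+\int_\Omega|u_1|^{q-2}u_1(w-u_1)\big)=E(u_1)+\int_\Omega\Lambda_{1,q}|u_1|^{q-2}u_1(w-u_1)$, where the middle step uses concavity of $w\mapsto|w|_q$ minus... actually convexity of $G$ and $|w|_q=(qG(w))^{1/q}$ — this last route is cleanest and avoids any differentiation of the total variation.
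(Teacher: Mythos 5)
Your proof of the main subdifferential inclusion $\Lambda_{1,q}|u_1|^{q-2}u_1\in\partial E(u_1)$ via the elementary two-step inequality
\[
E(w)\;\geq\;\Lambda_{1,q}|w|_q\;\geq\;\Lambda_{1,q}\Bigl(|u_1|_q+\int_\Omega|u_1|^{q-2}u_1(w-u_1)\Bigr)\;=\;E(u_1)+\int_\Omega\Lambda_{1,q}|u_1|^{q-2}u_1(w-u_1)
\]
is correct and is a genuinely different (and arguably cleaner) route than the paper's, which invokes the abstract nonsmooth Lagrange-multiplier rule \cite[Proposition 6.4]{KS07} (Lemma~\ref{Lagrangemult}) with the auxiliary choice $\tilde u=-u_1$ and then has to identify the Lagrange multiplier $t^{-1}$ with $\Lambda_{1,q}$ by a separate computation. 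Your argument gets the right multiplier for free from the definition of $\Lambda_{1,q}$ and the convexity of $|\cdot|_q$; the first of your two suggested routes (one-sided directional differentiation of the quotient) is essentially what the cited lemma packages up, so discarding it in favor of the second is the right call. The $0$-homogeneity of $\partial E$ and the energy computation for $u=\Lambda_{1,q}^{1/(q-1)}u_1$ match the paper, as does the proof that every nontrivial solution $w$ satisfies $\Phi(w)\geq\frac{q-1}{q}\Lambda_{1,q}^{q/(q-1)}$ via $|\Delta w|_T=|w|_q^q$.

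However, your final paragraph contains a genuine error. You set $g(t)=\Lambda_{1,q}t-\frac1q t^q$ and claim $\min_{t\geq0}g(t)=\frac{q-1}{q}\Lambda_{1,q}^{q/(q-1)}$, attained at $t_*=\Lambda_{1,q}^{1/(q-1)}$. That is backwards: for $q>1$ one has $g'(t)=\Lambda_{1,q}-t^{q-1}$, so $t_*$ is a strict \emph{maximum} of $g$ and $g(t)\to-\infty$ as $t\to\infty$. Consequently the chain $\Phi(w)\geq g(|w|_q)$ gives no lower bound at all — it only shows $\Phi(w)\geq-\infty$. In fact $\Phi$ is unbounded below on $L^q(\Omega)$: fix $\varphi\in BL_0(\Omega)\setminus\{0\}$ and observe $\Phi(s\varphi)=s|\Delta\varphi|_T-\frac{s^q}{q}|\varphi|_q^q\to-\infty$ as $s\to\infty$. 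So $\min_{v\in L^q(\Omega)}\Phi(v)$ as literally written cannot equal $\frac{q-1}{q}\Lambda_{1,q}^{q/(q-1)}$; the displayed equation in the statement is evidently a slip for $c_q$, the minimum of $\Phi$ over nontrivial solutions, which is all that the paper's own proof (and the correct part of yours) establishes. You should simply drop the final "global minimum over $L^q$" claim and stop at the correct conclusion that $u$ attains the least energy $c_q=\frac{q-1}{q}\Lambda_{1,q}^{q/(q-1)}$ among nontrivial solutions.
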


Before providing the proof of this result, we recall the following version of the Lagrange multiplier rule for nonsmooth convex functionals.

\begin{lemma}[{\cite[Proposition 6.4]{KS07}}]\label{Lagrangemult}
Let $X$ be a Banach space,  $E: X \mapsto (-\infty,\infty]$  be a convex functional and $G: X \mapsto \mathbb{R}$ be convex and continuous. Assume there exists $u\in X$ such that
\begin{enumerate}
\item $E(u)=\min \{E(v):\ v\in X,\ G(v)=1\}$;
\item there exists $\tilde{u} \in X$ such that
\[
E(u+\tilde{u})<E(u),\ G(u+\tilde{u})<1,\ G(u-\tilde{u})<\infty .
\]
\end{enumerate}
Then
\[
\partial G(u) \subset \bigcup_{t \geq 0} t \partial E(u).
\]
\end{lemma}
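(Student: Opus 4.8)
The plan is to reduce the equality-constrained problem in hypothesis~(1) to a minimization of $E$ over a suitable \emph{half-space}, on which the standard convex subdifferential calculus applies. (It is understood that $u\in\mathrm{dom}\,E$; otherwise $\partial E(u)=\emptyset$ while $\partial G(u)\neq\emptyset$, since $G$ is convex and continuous, and the conclusion could not hold; in the application $u=u_1\in BL_0(\Omega)$.) I would fix $\xi\in\partial G(u)$ and first observe that $\xi\neq 0$: if $\xi=0$, the subgradient inequality gives $G(v)\geq G(u)=1$ for all $v\in X$, contradicting $G(u+\tilde u)<1$ from hypothesis~(2). Writing $w:=u+\tilde u$, so that $G(w)<1$ and $E(w)<E(u)$, I set
\[
H:=\{v\in X:\ \langle\xi,v-u\rangle\geq 0\},
\]
and note that $u\in H$ and, by the subgradient inequality $G(v)\geq 1+\langle\xi,v-u\rangle$, that $H\subseteq\{v\in X:\ G(v)\geq 1\}$.

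The key step is to show that $u$ minimizes $E$ over $H$. Take $v\in H$, so $G(v)\geq 1$. If $G(v)=1$, then $E(v)\geq E(u)$ by hypothesis~(1). If $G(v)>1$, the function $s\mapsto G((1-s)w+sv)$ is continuous on $[0,1]$ and equals $G(w)<1$ at $s=0$ and $G(v)>1$ at $s=1$, so by the intermediate value theorem there is $s_0\in(0,1)$ with $G(v_0)=1$, where $v_0:=(1-s_0)w+s_0v$. Convexity of $E$ gives $E(v_0)\leq(1-s_0)E(w)+s_0E(v)$; hence, were $E(v)<E(u)$, then (using $E(w)<E(u)$ as well) we would get $E(v_0)<E(u)$ with $G(v_0)=1$, contradicting hypothesis~(1). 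So $E(v)\geq E(u)$ in all cases.

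To conclude, since $u$ minimizes the proper convex functional $E+I_H$ over $X$, where $I_H$ is the indicator of $H$ (equal to $0$ on $H$ and $+\infty$ elsewhere), we have $0\in\partial(E+I_H)(u)$, and by the Moreau--Rockafellar sum rule $0\in\partial E(u)+\partial I_H(u)$. Here $\partial I_H(u)=\{-t\xi:\ t\geq 0\}$ is the normal cone of the half-space $H$ at its boundary point $u$, so there is $\eta\in\partial E(u)$ with $\eta=t\xi$ for some $t\geq 0$. If $t=0$, then $0\in\partial E(u)$, i.e.\ $u$ minimizes $E$ over all of $X$, contradicting $E(u+\tilde u)<E(u)$; hence $t>0$ and $\xi=\tfrac1t\,\eta\in\tfrac1t\,\partial E(u)\subseteq\bigcup_{t\geq 0}t\,\partial E(u)$. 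Since $\xi\in\partial G(u)$ was arbitrary, this gives the assertion.

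I expect the only delicate point to be the constraint qualification for the sum rule: the effective domain of $E$ (in the application, $BL_0(\Omega)$) need not be open, so continuity must be placed on $I_H$ rather than on $E$. This is exactly why one passes to the half-space $H$ instead of working directly with the non-convex superlevel set $\{G\geq 1\}$: $I_H$ is continuous at every point of $\mathrm{int}\,H$, so it suffices to check $\mathrm{int}\,H\cap\mathrm{dom}\,E\neq\emptyset$, which is immediate since $\xi\neq 0$ and $\mathrm{dom}\,E$ is not contained in the hyperplane $\{\langle\xi,\cdot-u\rangle=0\}$. If one wished to avoid the sum rule entirely, one could argue by a Hahn--Banach separation in $X\times\R$ of the epigraph of $E$ from $\{(v,r):\ v\in\mathrm{int}\,H,\ r<E(u)\}$, using $E(u+\tilde u)<E(u)$ to rule out a vertical separating hyperplane; everything else is routine.
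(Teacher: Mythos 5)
The paper does not actually prove this lemma: it is quoted, with a citation, from \cite[Proposition 6.4]{KS07}, so there is no in-paper argument to compare against and I am judging your proof on its own merits. Your overall strategy --- show that $u$ minimizes $E$ over the half-space $H=\{v:\langle\xi,v-u\rangle\ge 0\}$ and then read off the multiplier from the normal cone of $H$ --- is the natural one, and your intermediate-value/convexity argument establishing that $u$ minimizes $E$ on $H$ is correct, as is the final normal-cone computation and the exclusion of $t=0$.

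The genuine gap is in your verification of the constraint qualification. You claim $\operatorname{int}H\cap\operatorname{dom}E\ne\emptyset$ ``since $\operatorname{dom}E$ is not contained in the hyperplane $\{\langle\xi,\cdot-u\rangle=0\}$.'' That is not sufficient: the only point of $\operatorname{dom}E$ that the hypotheses place off the hyperplane is $u+\tilde u$, and the subgradient inequality gives $\langle\xi,\tilde u\rangle\le G(u+\tilde u)-1<0$, so $u+\tilde u$ lies strictly on the \emph{wrong} side of the hyperplane; a convex set touching the hyperplane at $u$ and dipping into $\{\langle\xi,\cdot-u\rangle<0\}$ need not meet $\operatorname{int}H$ at all. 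The natural repair is to use $u-\tilde u$, which always lies in $\operatorname{int}H$ because $-\langle\xi,\tilde u\rangle>0$; but then one needs $E(u-\tilde u)<\infty$, which is not among the stated hypotheses (the listed condition $G(u-\tilde u)<\infty$ is automatic for real-valued $G$ and gives nothing). The gap is not cosmetic: take $X=\R^2$, $G(x,y)=1+x$, $E(x,y)=-\sqrt{-x}$ for $x\le 0$ and $+\infty$ for $x>0$, $u=(0,0)$, $\tilde u=(-1,0)$. All stated hypotheses hold, $E$ is convex and lower semicontinuous, yet $\partial E(u)=\emptyset$ while $\partial G(u)=\{(1,0)\}$, so the conclusion fails; note $E(u-\tilde u)=+\infty$ there. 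Your Hahn--Banach alternative has the same defect (without a point of $\operatorname{dom}E$ in $\operatorname{int}H$ one cannot rule out a vertical separating hyperplane). In the paper's application the repair is available and should be stated: there $\tilde u=-u_1$ and $E$ is positively $1$-homogeneous, so $E(u-\tilde u)=E(2u_1)=2E(u_1)<\infty$, and with this extra observation your argument closes.
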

\begin{proof}[Proof of Proposition~\ref{prop:existence_of_les}]
Observe that
\[
0<\Lambda_{1,q}=\min\{E(u):\ u\in L^{q}(\Omega),\ |u|_q^q=1\}<\infty,
\]
which, by Lemma~\ref{alpha1:lem}, is achieved at some $u_1\in BL_0(\Omega)\subseteq L^{q}(\Omega)$ with $|u_1|_q=1$.  By applying Lemma~\ref{Lagrangemult} with $X=L^q(\Omega)$, $E$ as in \eqref{E:def}, $G$ as in \eqref{eq:def_G}, and $\tilde u =-u_1$ and recalling \eqref{Gsub}, we have 
\[
|u_1|^{q-2}u_1\in\bigcup_{t\geq 0} t \partial E(u_1).
\]
In particular, there are $t\geq 0$ and $f\in \partial E \subset L^{\frac{q}{q-1}}(\Omega)$ such that $|u_1|^{q-2}u_1=tf$.  Since $u_1\neq0$, then $t>0$. Moreover, by Proposition~\ref{prop5p2}, there is $v\in W^{1,1}_0(\Omega)\cap L^\infty(\Omega)$ with $|v|_\infty\leq 1$, $-\Delta v=f$ in distributional sense, and $E(u_1)=-\int_\Omega u_1\Delta v$. Then, we have that $-\Delta v = t^{-1}|u_1|^{q-2}u_1$ and, since $|u_1|_q=1$, 
\[
\Lambda_{1,q}=|\Delta u_1|_T=E(u_1)=-\int_\Omega u_1\,\Delta v=t^{-1}\int_\Omega |u_1|^{q}=t^{-1}.
\]
In particular, $u_1$ is a solution to \eqref{eq:equation_for_constrained_cpt}.

If $u=\Lambda_{1,q}^{\frac{1}{q-1}} u_1$, then
\begin{align*}
-\int_\Omega u\,\Delta v
=-\Lambda_{1,q}^{\frac{1}{q-1}}\int_\Omega u_1\,\Delta v
=\Lambda_{1,q}^{\frac{1}{q-1}} |\Delta u_1|_T
=|\Delta (\Lambda_{1,q}^\frac{1}{q-1}u_1)|_T
= |\Delta u|_T
\end{align*}
and 
\[
-\Delta v=\Lambda_{1,q}|u_1|^{q-2}u_1=|\Lambda_{1,q}^\frac{1}{q-1} u_1|^{q-2}\Lambda_{1,q}^\frac{1}{q-1} u_1=|u|^{q-2}u
\qquad\text{ in }\Omega.
\]
This yields that $u$ is a solution of \eqref{1bi}. A direct calculation shows that
\[
\Phi(u)=\Phi(\Lambda_{1,q}^{\frac{1}{q-1}}u_1)
=\Lambda_{1,q}^\frac{1}{q-1}|\Delta u_1|_T-\frac{1}{q}\Lambda_{1,q}^\frac{q}{q-1}|u_1|_q
=\frac{q-1}{q}\Lambda_{1,q}^\frac{q}{q-1}.
\]
Moreover, if $U\in BL_0(\Omega)$ is a nontrivial solution of \eqref{1bi}, then $|U|_{q}^{q}=|\Delta U|_T$ (by \eqref{v:def}); then $|\Delta U|_T=(\frac{|\Delta U|_T}{|U|_{q}})^\frac{q}{q-1}$ and therefore
\begin{align*}
\Phi(U)
=\left(1-\frac{1}{q}\right)|\Delta U|_T
=\frac{q-1}{q}\left(\frac{|\Delta U|_T}{|U|_{q}}\right)^{\frac{q}{q-1}}
\geq \frac{q-1}{q}\Lambda_{1,q}^{\frac{q}{q-1}}.
\end{align*}
As a consequence, $u$ given by $\Lambda_{1,q}^{\frac{1}{q-1}}u_1$ is a least-energy solution of $\Phi$.
\end{proof}

\begin{remark}
    There are other possible standard characterizations of the least energy level. Indeed, if $q\in (1,2)$, then $c_q$ can be achieved by global minimization:
    \[
    c_q=\min\{\Phi(u):\ u\in L^{q}(\Omega)\}
    \]
    while, for $q\in (2,1^{**})$, it is a Mountain-Pass level
    \[
    c_q=\inf_{\gamma\in \Gamma} \sup_{t\in [0,1]} \Phi(\gamma(t)),
    \]
    where $\Gamma:=\{\gamma\in C([0,1]: \gamma(0)=0,\ \Phi(\gamma(1))<0\}$.
    The proof follows standard arguments combined with results from nonsmooth analysis. It is not hard to check that $\Phi$ satisfies the Palais-Smale condition at any $c\in \R$, namely (see \cite{s86}) that whenever $\left(u_n\right)\subset L^{q}(\Omega)$ is a sequence such that 
\[
\Phi(u_n)\to c\quad \text{ and }\quad E(v) \geq E(u_n) + \int_\Omega |u_n|^{q-2}u_n (v-u_n)+\int_\Omega z_n (v-u_n) \quad \text{ for all } v \in L^q(\Omega),
\]
where $z_n\to 0$ in $L^\frac{q}{q-1}(\Omega)$ as $n\to\infty,$ then $(u_n)$ possesses a convergent subsequence in $L^q(\Omega)$. Then we may use \cite[Theorem 1.7]{s86} when $q\in (1,2)$ (global minimization), and \cite[Theorem 3.2]{s86} when $q\in (2,1^{**})$ (the mountain pass theorem).
\end{remark}

\begin{remark} In \cite[Theorem 1.1]{BP18}, the authors show the existence of least-energy solutions for problems involving the 1-bilaplacian operator and with more general nonlinearities $f(x,s)$  on the right-hand-side (not necessarily homogeneous, superlinear at the origin, and subcritical at infinity).  See also Remark~\ref{rem:BP_comment} below.
\end{remark} 

\subsection{Characterization of solutions}

In \cite[Theorem 1.2]{PRT12}, the authors show that, for any convex domain $\Omega$ or any domain with $C^{1,\gamma}$ boundary, the first eigenvalue of the 1-bilaplacian equation is achieved and it is given by 
\begin{align*}
\Lambda_{1,1}(\Omega)=\inf_{u\in BL_0(\Omega)\backslash \{0\}}\frac{|\Delta u|_T}{|u|_1}=\frac{1}{\Psi_M(\Omega)},
\end{align*}
where $\Psi_M(\Omega)=\max_\Omega \Psi=\Psi(x_M)$ for $x_M\in\Omega$ a global maximum point of $\Psi$,  the torsion function of $\Omega$, namely, the solution of $-\Delta \Psi=1$ in $\Omega$ with $\Psi=0$ on $\partial \Omega$.  Furthermore, the authors in \cite{PRT12} also show that a first eigenfunction is given by  $G_\Omega(\cdot,x_M)$, where $G_\Omega$ is the Green's function for the Dirichlet Laplacian in $\Omega$.

In this section, we study the equivalent result in the nonlinear case $\Lambda_{1,q}$ for $q\in[1,1^{**})$.  For this, we need the following remark. Let
\begin{align}\label{hdef}
h:\overline{\Omega}\to \R\quad \text{ be given by }\quad h(x)=|G_\Omega(\cdot,x)|_q^q.
\end{align}
Then $h$ is well defined (because, by the maximum principle, $0\leq G_\Omega(x,y)\leq C|x-y|^{2-N}$ for some $C>0$ and because $q\in [1,1^{**})$). Furthermore, $h$ is continuous in $\overline{\Omega}$. Indeed, let $x\in \overline{\Omega}$, $(x_n)\subset \Omega$ be a sequence such that $x_n\to x$ as $n\to\infty$, and let $B$ be a large ball so that $\Omega-x_n\subset B$ for all $n$; then, by a change of variables,
\[
|G_\Omega(x_n,\cdot)|_q^q=\int_B |G_\Omega(x_n,y+x_n)|^q\chi_{\Omega}(y+x_n)\, dy.
\]
Since $|G_\Omega(x_n,y+x_n)|^q\chi_{\Omega}(y+x_n)\to |G_\Omega(x,y+x)|^q\chi_{\Omega}(y+x)$ as $n\to\infty$ for a.e. $y\in B$ and
\begin{align*}
|G_\Omega(x_n,y+x_n)|^q\chi_{\Omega}(y+x_n)\leq \frac{1}{|y|^{q(N-2)}}\in L^1(B),    
\end{align*}
then, by dominated convergence,
\[
h(x_n)=|G_\Omega(x_n,\cdot)|_q^q\to \int_B|G_\Omega(x,y+x)|^q\chi_{\Omega}(y+x)\, dy=|G_\Omega(x,\cdot)|_q^q=h(x).
\]
Finally, note also that $h(x)=0$ for all $x\in \partial \Omega$ (because $G_\Omega(x,y)=0$ for all $x\in\partial \Omega$ and $y\in\Omega$). These facts guarantee the existence of $x_M\in\Omega$ such that $h(x_M)=\max_{\overline{\Omega}} h.$

\begin{prop}\label{prop1.7-part1}
Let $\Omega$ be convex or of class $C^{1,\gamma}$ for some $\gamma\in (0,1]$ and $q\in [1,1^{**})$.  Then,
\begin{align*}
\Lambda_{1,q}(\Omega)=\frac{1}{|G_\Omega(\cdot,x_M)|_q},
\end{align*}
where $x_M\in \Omega$ is a maximum of the function $h$ introduced in \eqref{hdef}.
Moreover, the function
\begin{align}\label{exp:for2}
u_1(x)=\frac{G_\Omega(x,x_M)}{|G_\Omega(\cdot,x_M)|_q}\qquad \text{ for }x\in \Omega
\end{align}
achieves $\Lambda_{1,q}$
\end{prop}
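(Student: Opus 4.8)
The plan is to prove two inequalities. For the upper bound $\Lambda_{1,q}(\Omega)\le 1/|G_\Omega(\cdot,x_M)|_q$, I would use the candidate function $u_1$ in \eqref{exp:for2} as a competitor. Since $G_\Omega(\cdot,x_M)\in L^q(\Omega)$ and is harmonic away from $x_M$, one checks $u_1\in BL_0(\Omega)$ with $-\Delta u_1 = \delta_{x_M}/|G_\Omega(\cdot,x_M)|_q$ as a Radon measure, so $|\Delta u_1|_T = 1/|G_\Omega(\cdot,x_M)|_q$; since also $|u_1|_q=1$ by construction, plugging $u_1$ into the variational characterization \eqref{eq:Lambda_{1,p}} of $\Lambda_{1,q}$ from Lemma~\ref{alpha1:lem} gives the upper bound. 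This simultaneously will show that $u_1$ achieves $\Lambda_{1,q}$, once the matching lower bound is in place.

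For the lower bound $\Lambda_{1,q}(\Omega)\ge 1/|G_\Omega(\cdot,x_M)|_q$, the idea is: for any $u\in BL_0(\Omega)$ with $|u|_q=1$, write $\mu:=-\Delta u$, a finite Radon measure on $\Omega$ with $|\mu|(\Omega)=|\Delta u|_T$, and represent $u$ via the Green function, $u(x)=\int_\Omega G_\Omega(x,y)\,d\mu(y)$ (this representation holds for $BL_0$ functions with measure-valued Laplacian; it is the $BL_0$ analogue of the distributional solution formula, and should follow from the density Lemmas~\ref{density:lemma}, \ref{lemma:density} used in Lemma~\ref{alpha1:lem} together with the regularity hypotheses on $\partial\Omega$). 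Then by Minkowski's integral inequality,
\[
1=|u|_q=\Big|\int_\Omega G_\Omega(\cdot,y)\,d\mu(y)\Big|_q
\le \int_\Omega |G_\Omega(\cdot,y)|_q\,d|\mu|(y)
\le |\mu|(\Omega)\cdot \max_{y\in\overline\Omega}|G_\Omega(\cdot,y)|_q
= |\Delta u|_T\,|G_\Omega(\cdot,x_M)|_q,
\]
which rearranges to $|\Delta u|_T\ge 1/|G_\Omega(\cdot,x_M)|_q$; taking the infimum over admissible $u$ gives the lower bound. Note this argument also shows the last line of Theorem~\ref{char:thm}: equality in Minkowski forces $\mu$ to be (up to normalization) a single Dirac mass concentrated where $|G_\Omega(\cdot,y)|_q$ is maximal, and sign-definiteness of $u$ follows since $G_\Omega\ge 0$.

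The main obstacle I expect is justifying the Green representation formula $u=\int_\Omega G_\Omega(\cdot,y)\,d\mu(y)$ rigorously for a general $u\in BL_0(\Omega)$ whose distributional Laplacian is only a measure, and in particular controlling the boundary behavior so that the representation picks out exactly the Dirichlet Green function (this is where convexity or $C^{1,\gamma}$ regularity of $\Omega$ enters, via the equivalence of norms in Lemma~\ref{equiv:lem} and the density results). A secondary technical point is the measurability/continuity of $y\mapsto |G_\Omega(\cdot,y)|_q$ needed to apply Minkowski's inequality and to make sense of the maximum — but this is exactly the continuity of $h$ established just before the statement, so it is already available. Everything else (the pointwise bound $G_\Omega(x,y)\le C|x-y|^{2-N}$, membership $G_\Omega(\cdot,x_M)\in L^q$ for $q<1^{**}$, lower semicontinuity of $|\Delta\cdot|_T$) is routine or quoted from the appendix.
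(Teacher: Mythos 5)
Your proof is correct and takes essentially the same route as the paper: identical upper bound (test $G_\Omega(\cdot,x_M)$), and for the lower bound the same Green-representation $u=\int_\Omega G_\Omega(\cdot,y)\,d\mu(y)$ (which the paper justifies via Remark~\ref{nico:rmk}). The only cosmetic difference is that you invoke Minkowski's integral inequality where the paper combines Jensen's inequality with Fubini's theorem and the maximality of $x_M$ --- an equivalent computation organized slightly differently.
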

\begin{proof}
Note that
\begin{align*}
\Lambda_{1,q}(\Omega)\leq \frac{|\Delta G_\Omega(x_M,\cdot)|_T}{|G_\Omega(x_M,\cdot)|_q}=\frac{1}{|G_\Omega(x_M,\cdot)|_q},
\end{align*}
because $G_\Omega(x_M,\cdot)\in BL_0(\Omega)$ and
\begin{align*}
    |\Delta G_\Omega(x_M,\cdot)|_T
    &=
    \sup\bigg\{\int_\Omega G_\Omega(x_M,\cdot)\Delta\varphi:\varphi\in C^2_c(\Omega),\ |\varphi|_\infty\leq 1\bigg\}\\
    &=
    \sup\bigg\{-\varphi(x_M):\varphi\in C^2_c(\Omega),\ |\varphi|_\infty\leq 1\bigg\}=1.
    \end{align*}
On the other hand, let $u_1\in BL_0(\Omega)$ be a minimizer for $\Lambda_{1,q}(\Omega)$ such that $|u_1|_q=1$. Then $-\Delta u_1=\mu$ for some Radon measure $\mu$ and, in particular, $u_1(x)=\int_\Omega G_\Omega(x,y)\, d\mu(y)$ for $x\in \Omega$ (see Remark~\ref{nico:rmk}). Moreover, by Jensen's inequality, Fubini's theorem, and \eqref{xMdef},
\begin{align}
1&=|u_1|^q_q
=\int_\Omega \bigg|\int_\Omega G_\Omega(x,y)\, d\mu(y) \bigg|^q dx
\leq |\mu|(\Omega)^{q-1}\int_\Omega \int_\Omega G_\Omega(x,y)^q\, d|\mu|(y)\, dx\notag\\
&=|\mu|(\Omega)^{q-1}\int_\Omega \int_\Omega G_\Omega(x,y)^q\, dx\, d|\mu|(y)
\leq |\mu|(\Omega)^{q-1}|G_\Omega(\cdot,x_M)|_q^q\int_\Omega \, d|\mu|(y)\notag\\
&=|\mu|(\Omega)^q|G_\Omega(\cdot,x_M)|_q^q,\label{eq1}
\end{align}
which yields, by \eqref{measures_equiv}, that $\Lambda_{1,q}=|\Delta u_1|_T=|\mu|(\Omega)\geq \frac{1}{|G_\Omega(\cdot,x_M)|_q}$.
\end{proof}

\begin{remark}
    Note that, if $q=1$, then $h(x)=|G(x,\cdot)|_{1}=\int_{\Omega}G_\Omega(x,y)\, dy=\Psi(x)$, where $\Psi$ is the Dirichlet torsion function of $\Omega.$  Therefore, \cite[Theorem 1.2]{PRT12} is recovered from Theorem~\ref{char:thm}, when $q=1$.
\end{remark}

\begin{coro}\label{explicit:cor}
Let $\Omega$ be convex or of class $C^{1,\gamma}$, for some $\gamma\in (0,1]$. Let $x_M\in \Omega$ be such that
\begin{align*}
    |G_\Omega(\cdot,x_M)|_q=\max_{x\in \overline{\Omega}}|G_\Omega(\cdot,x)|_q.
\end{align*}
 Then, the function 
\begin{align*}
    u(x)=|G_\Omega(\cdot,x_M)|_q^{-\frac{q}{q-1}} G_\Omega(x,x_M)\qquad\text{for }x\in \Omega
\end{align*}
 is a least-energy solution of \eqref{1bi}.
\end{coro}
\begin{proof}
The claim follows from Proposition~\ref{prop:existence_of_les} and Theorem~\ref{char:thm}.
\end{proof}

The next two results extend \cite[Proposition 5.6]{PRT12} and \cite[Proposition 6.6]{PRT12} to our setting, with very similar proofs.

\begin{prop}\label{pos:prop}
    Let $u\in BL_0(\Omega)$ be a minimizer for $\Lambda_{1,q}(\Omega).$  Then, up to a sign,
    $\mu:=-\Delta u$ is a positive Radon measure and $u$ is positive in $\Omega$.
\end{prop}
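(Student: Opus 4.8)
The plan is to follow the strategy of \cite[Proposition 5.6]{PRT12}: combine the Green representation of functions in $BL_0(\Omega)$ with an ``absolute value'' competitor built from the Jordan decomposition of $-\Delta u$. First I would fix a minimizer $u$ of $\Lambda_{1,q}(\Omega)$; since the quotient $|\Delta u|_T/|u|_q$ is $0$-homogeneous and finite only for $u\neq 0$, we have $u\not\equiv 0$ and may normalize $|u|_q=1$. As $u\in BL_0(\Omega)$, the distribution $\mu:=-\Delta u$ is a finite signed Radon measure with $|\Delta u|_T=|\mu|(\Omega)$ (see \eqref{measures_equiv}) and $u(x)=\int_\Omega G_\Omega(x,y)\,d\mu(y)$ (see Remark~\ref{nico:rmk}). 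Writing $\mu=\mu^+-\mu^-$, $|\mu|=\mu^++\mu^-$, I would set $w^\pm(x):=\int_\Omega G_\Omega(x,y)\,d\mu^\pm(y)\geq 0$, so that $u=w^+-w^-$, and introduce the competitor
\[
\tilde u(x):=\int_\Omega G_\Omega(x,y)\,d|\mu|(y)=w^+(x)+w^-(x).
\]

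The second step is to check that $\tilde u$ is admissible and at least as good as $u$. Since $|\mu|$ is a finite positive Radon measure, $\tilde u\in BL_0(\Omega)$ with $-\Delta\tilde u=|\mu|\geq 0$ and $|\Delta\tilde u|_T=|\mu|(\Omega)=|\Delta u|_T$; because $G_\Omega\geq 0$ we have $|u|=|w^+-w^-|\leq w^++w^-=\tilde u$ pointwise, hence $|\tilde u|_q\geq|u|_q=1$. Plugging $\tilde u$ into the quotient yields
\[
\Lambda_{1,q}(\Omega)\leq\frac{|\Delta\tilde u|_T}{|\tilde u|_q}=\frac{|\Delta u|_T}{|\tilde u|_q}\leq\frac{|\Delta u|_T}{|u|_q}=\Lambda_{1,q}(\Omega),
\]
so all inequalities are equalities. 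In particular $|\tilde u|_q=|u|_q$, which together with $0\leq|u|\leq\tilde u$ forces $|u|=\tilde u$ a.e.\ in $\Omega$; equivalently, $\min\{w^+,w^-\}=0$ a.e.\ in $\Omega$.

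The final step is a strong minimum principle argument. Each $w^\pm$ is the Green potential of a finite positive measure, hence, in its canonical representative, a nonnegative superharmonic function on the connected open set $\Omega$ which is not identically $+\infty$ (its integral over $\Omega$ is bounded by $\|\Psi\|_\infty|\mu|(\Omega)$, with $\Psi$ the torsion function). By the strong minimum principle for superharmonic functions, $w^\pm$ is either identically $0$ or strictly positive at every point of $\Omega$. If both $w^+$ and $w^-$ were nontrivial they would be positive everywhere, contradicting $\min\{w^+,w^-\}=0$ a.e.; since $u=w^+-w^-\not\equiv 0$, exactly one of them vanishes identically. Up to replacing $u$ by $-u$ (still a minimizer) we may assume $w^-\equiv 0$, i.e.\ $\mu^-=0$, so $\mu=-\Delta u\geq 0$ is a positive Radon measure and $u=w^+\geq 0$; being a nontrivial nonnegative superharmonic function, $u>0$ throughout $\Omega$ by the strong minimum principle once more.

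I expect the only delicate point to be this last step, namely ensuring the strong minimum principle is applied to the correct lower semicontinuous (possibly unbounded) superharmonic representatives of $w^\pm$, so that the statement ``a Green potential of a nonzero positive measure is strictly positive everywhere in $\Omega$'' is genuinely at our disposal; the admissibility of $\tilde u$ in $BL_0(\Omega)$ and the identity $|\Delta\tilde u|_T=|\mu|(\Omega)$ are routine consequences of the properties of $BL_0(\Omega)$ and of Green potentials collected in the appendix.
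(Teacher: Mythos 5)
Your proof is correct and is exactly the argument the paper has in mind: the paper's proof is a one-line deferral to \cite[Proposition 5.6]{PRT12} "using $L^q$ norms instead of $L^1$ norms," and your Jordan-decomposition/Green-potential competitor with the strong minimum principle is precisely that argument, carried out with the $L^q$ normalization $|u|_q=1$ and the pointwise inequality $|u|\leq\tilde u$ feeding into $|\tilde u|_q\geq|u|_q$. The one delicate point you flag at the end (working with the lower semicontinuous superharmonic representatives of $w^{\pm}$ so that "Green potential of a nonzero positive measure is strictly positive everywhere" is available) is indeed the only place requiring care, and it is handled the same way in \cite{PRT12}.
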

\begin{proof}
The proof is exactly the same as in \cite[Proposition 5.6]{PRT12} using $L^q$ norms instead of $L^1$ norms. 
\end{proof}

\begin{prop}\label{prop:uniqueness}
Let $u_1$ be a minimizer for $\Lambda_{1,q}(\Omega)$ such that $|u_1|_q=1$ and let $h$ be as in \eqref{hdef}. If $h$ admits only one maximum point $x_M\in\Omega$, then, $u_1$ is given by \eqref{exp:for2} and it is the unique (up to sign) minimizer of $\Lambda_{1,q}(\Omega)$ such that $|u_1|_q=1$.
\end{prop}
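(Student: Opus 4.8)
The plan is to combine the chain of inequalities in the proof of Proposition~\ref{prop1.7-part1} with the strict convexity available in Jensen's inequality, forcing equality everywhere and pinning down the representing measure $\mu$. First I would take an arbitrary minimizer $u_1$ with $|u_1|_q=1$. By Proposition~\ref{pos:prop} we may assume (up to sign) that $u_1>0$ in $\Omega$ and that $\mu:=-\Delta u_1$ is a positive Radon measure, so that $u_1(x)=\int_\Omega G_\Omega(x,y)\,d\mu(y)$. Retracing the estimate \eqref{eq1}, since $\Lambda_{1,q}=|\mu|(\Omega)=\mu(\Omega)$ is now known to equal $1/|G_\Omega(\cdot,x_M)|_q$, both inequalities in \eqref{eq1} must be equalities. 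The second one, namely $\int_\Omega G_\Omega(x,y)^q\,dx \le |G_\Omega(\cdot,x_M)|_q^q$ for $\mu$-a.e.\ $y$, becomes an equality integrated against $\mu$; since $h(y)=|G_\Omega(\cdot,y)|_q^q \le h(x_M)$ for every $y$ with equality only at $y=x_M$ (uniqueness of the maximum), this forces $\mu$ to be supported at $\{x_M\}$, i.e.\ $\mu = c\,\delta_{x_M}$ for some $c>0$.

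Next I would determine the constant. Since $u_1(x) = \int_\Omega G_\Omega(x,y)\,d\mu(y) = c\,G_\Omega(x,x_M)$ and $|u_1|_q=1$, we get $c = 1/|G_\Omega(\cdot,x_M)|_q$, so $u_1$ is exactly the function in \eqref{exp:for2}. This simultaneously proves that \eqref{exp:for2} is (up to sign) the only minimizer normalized by $|u_1|_q=1$.

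The main obstacle I anticipate is justifying rigorously that equality in the $\mu$-integrated inequality
\[
\int_\Omega\Big(\int_\Omega G_\Omega(x,y)^q\,dx\Big)d\mu(y) = \mu(\Omega)\,h(x_M)
\]
together with $h(y)\le h(x_M)$ and the strict inequality $h(y)<h(x_M)$ for $y\neq x_M$ implies $\mathrm{supp}\,\mu\subseteq\{x_M\}$; this is a standard measure-theoretic argument (the integrand $h(x_M)-h(y)$ is nonnegative, continuous, vanishes only at $x_M$, and has zero integral against the positive measure $\mu$, hence $\mu$ charges only $\{x_M\}$), but one should also double-check the first equality in \eqref{eq1} (the Jensen step) is consistent — with $\mu$ a point mass Jensen is trivially an equality, so no extra information is lost there. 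A minor point to address is the normalization bookkeeping when passing between $|\mu|(\Omega)$ and $\mu(\Omega)$, which is immediate once $\mu\ge 0$. Finally, the statement "$u_1$ is given by \eqref{exp:for2}" then follows at once, and uniqueness up to sign is exactly what the displayed computation yields, completing the proof.
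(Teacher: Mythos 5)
Your proof is correct and follows essentially the same route as the paper's: apply Proposition~\ref{pos:prop} to reduce to a positive representing measure $\mu=-\Delta u_1$, use that minimality together with Proposition~\ref{prop1.7-part1} forces equality throughout the chain \eqref{eq1}, deduce from the equality in the $\mu$-integrated bound $\int_\Omega h\,d\mu = h(x_M)\mu(\Omega)$ and the uniqueness of the maximizer of $h$ that $\operatorname{supp}\mu\subseteq\{x_M\}$, and then fix the constant by normalization. The paper's writeup is slightly terser (it does not comment on the Jensen step), but there is no substantive difference in the argument.
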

\begin{proof}
Let $u_1\in BL_0(\Omega)$ be a minimizer for $\Lambda_{1,q}(\Omega)$. Then, without loss of generality, we may assume that $\mu=-\Delta u_1$ is a positive Radon measure (by Proposition~\ref{pos:prop}). By the definition of $h$ and $x_M$, we have that \eqref{eq1} holds with equalities; in particular,
\begin{align}
\int_\Omega h(y)\, d\mu(y)=\int_\Omega \int_\Omega |G_\Omega(x,y)|^q\, dx\, d\mu(y)
=|G_\Omega(\cdot,x_M)|_q^q\int_\Omega \, d\mu(y)
=\int_\Omega h(x_M) \, d\mu(y),
\end{align}
with $h$ as in \eqref{hdef}.  Since $\mu$ is a positive Radon measure and $h(x_M)-h(y)\geq 0$ for all $y\in \Omega,$ we deduce that $\mu$ has support in the set $\{y\in\Omega:h(x_M)=h(y)\}=\{x_M\}$.  This implies that $\mu=\mu(\Omega)\delta_{x_M}$ and the claim follows.
\end{proof}

\begin{proof}[Proof of Theorem~\ref{char:thm}]
    This is now a consequence of Propositions~\ref{prop1.7-part1},~\ref{pos:prop}, and~\ref{prop:uniqueness}.
\end{proof}

\begin{remark}\label{rmk:q} 
In Section~\ref{sec:limit:ball} we show that, if $\Omega$ is a ball, then the point $x_M\in \Omega$ given by \eqref{xMdef} can be taken as the origin $x_M=0$. However, in more general domains, we conjecture that the point $x_M=x_M(q)$ may vary depending on $q$.  To see this, here we only provide a \emph{heuristic argument}\footnote{We thank Sven Jarohs and Tobias Weth for helpful discussions in this regard.}. Consider a ``spinning top" domain $\Omega$ given by 
\begin{align*}
\Omega=\{(x,y,z)\in \R^3 \::\: |(x,y)|<1, \, z\in(|(x,y)|,2)\},    
\end{align*}
see Figure~\ref{Rfig}.  By symmetry and convexity, it is expected that the point $x_M$ lies on the set $\{(x,y,z)\in\Omega\::\: x=y=0\}$ (the dotted vertical line in Figure~\ref{Rfig}).  For $q=1$, the point $x_M(1)$ should be close to the center of mass of $\Omega$. As $q$ increases, this enhances the singularity of the Green function and penalizes more the points that are close to the boundary (note that $G(x,\cdot)\to 0$ as $\dist(x,\partial \Omega)\to 0$ and $|G(x,\cdot)|^q$ goes faster to zero for $q$ large).  As a consequence, the point $x_M(q)$ should move towards the insphere center, namely, the center of the largest sphere contained in $\Omega$ (represented by the dotted circle in Figure~\ref{Rfig}).  This phenomenon can already be seen, for example, with the fundamental solution in dimension 3.  Indeed, for $N=3$ and $q\in(0,3)$, consider the function $h:(0,2)\to (0,\infty)$ given by
\begin{align}\label{eq:Lq_of_fundamental}
    h_q(t)
    :=\int_\Omega |(0,0,t)-(x,y,z)|^{-q}\, dx\,dy\,dz
    =2\pi\int_0^1\rho\int_{\rho-t}^{2-t} |\rho^2+z^2|^{-q}\, dz\, d\rho.
\end{align}
Since we are interested in the (unique) maximum point of $h_q$, we differentiate and obtain that
\begin{align*}
h_q'(t)=2\pi\int_0^1\rho(|\rho^2+(\rho-t)^2|^{-q}-|\rho^2+(2-t)^2|^{-q})\, d\rho.
\end{align*}
These integrals can be computed explicitly for $q=1$ and $q=2$, and the corresponding roots can be found numerically. In particular, the points $y_M(1)\sim 1.28$ and $y_M(2)\sim1.27$ maximize $h_1$ and $h_2$ respectively.

\setlength{\unitlength}{1cm}
\begin{figure}[ht]
\begin{center}
\begin{picture}(11,5)(0,0)
\put(0,0){\includegraphics[width=5cm]{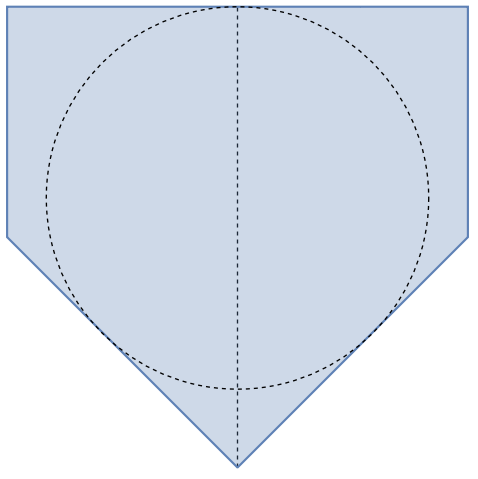}}
\put(6,0){\includegraphics[width=5cm]{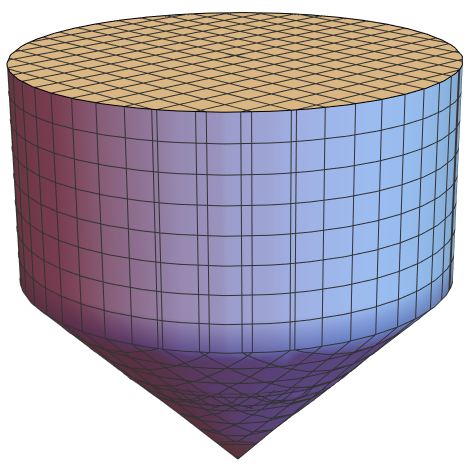}}
\put(2.41,3.2){$\bullet$}
\put(2.6,3.4){$y_M(1)$}
\put(2.41,3){$\bullet$}
\put(2.6,2.8){$y_{M}(2)$}
\end{picture}
\end{center}
\caption{On the right, the ``spinning top'' domain $\Omega$ introduced in Remark~\ref{rmk:q}. On the left, a transversal cut of $\Omega$ showing the maximizing points $y_{M}(1)$ (which is closer to the center of mass) and $y_{M}(2)$ (which is closer to the center of the largest sphere contained in $\Omega$), where for each $q$ and $N=3$, $y_M(q)$ denotes the maximum point of the $L^q$ norm of the fundamental solution, given by \eqref{eq:Lq_of_fundamental}.}
\label{Rfig}
\end{figure}
\end{remark}

\subsection{Asymptotics for \texorpdfstring{$p$}{p} and \texorpdfstring{$q$}{q}}

\begin{remark}\label{rem:beta=1}
Note that, when $p=1$, then \eqref{eq:lowerboundbeta} implies that
\[
q\in[1,1^{**})\quad\text{or, equivalently,}\quad q-1\in[0,1^{**}-1),
\] 
(see \eqref{eq:1*1**}).
\end{remark}
 Recall the definition of $\Lambda_{p,q}$ given in \eqref{alphaq}. The following result provides some uniform bounds for the minimizers.

\begin{lemma}\label{bounds:lemma}
Let $(p_n,q_n)$ and $(p,q)$  satisfy \eqref{eq:lowerboundbeta} for every $n\in \N$, and let $p_n\to p$ and $q_n\to q$ as $n\to\infty$. For any $n\in\N$, consider a minimizer $u_n$ for $\Lambda_{p_n,q_n}$ such that $|u_n|_{q_n}=1$. Then there exists $C>0$ such that, for every $n\in\N$,
\[
|\Delta u_n|_{p_n}\leq C, \quad\text{whenever }p_n>1,
\qquad \text{or} \qquad 
|\Delta u_n|_T\leq C, \quad\text{when } p_n=1.
\]
\end{lemma}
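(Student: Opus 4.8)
\textbf{Proof proposal for Lemma~\ref{bounds:lemma}.}

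The plan is to produce the uniform bound by exhibiting a single competitor whose Rayleigh-type quotient is controlled independently of $n$, and then to invoke the continuity of $(p,q)\mapsto\Lambda_{p,q}$ together with the normalization $|u_n|_{q_n}=1$. More precisely, since $p_n\to p$ and $q_n\to q$ with $(p,q)$ satisfying \eqref{eq:lowerboundbeta}, one checks first that the strict inequality $(N-2p)q<Np$ is stable under small perturbations, so that (for $n$ large) all pairs $(p_n,q_n)$ lie in a fixed compact subset $K$ of the admissible region \eqref{eq:lowerboundbeta}. The key quantitative input is that $\Lambda_{p_n,q_n}$ is bounded above uniformly on $K$: fix any nonzero test function $w\in C_c^\infty(\Omega)$ (which belongs to $W^{2,r}_\Delta(\Omega)$ for every $r\geq 1$); then $\Lambda_{p_n,q_n}\leq |\Delta w|_{p_n}/|w|_{q_n}$, and the right-hand side is continuous in $(p_n,q_n)$ on $K$ — the $L^{p_n}$ and $L^{q_n}$ norms of fixed functions in $C_c^\infty(\Omega)$ (a set of finite measure) depend continuously on the exponent — hence it is bounded by some $C=C(w,K,\Omega)$.

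With this in hand the bound on $u_n$ is immediate: by definition of a minimizer and the normalization $|u_n|_{q_n}=1$ we have, in the case $p_n>1$,
\[
|\Delta u_n|_{p_n}=\frac{|\Delta u_n|_{p_n}}{|u_n|_{q_n}}=\Lambda_{p_n,q_n}\leq C,
\]
and in the case $p_n=1$ the same computation with $|\Delta u_n|_1=|\Delta u_n|_T$ (Lemma~\ref{lemma:Deltau}) and Lemma~\ref{alpha1:lem} gives $|\Delta u_n|_T=\Lambda_{1,q_n}\leq C$. One small point to address is that in a minimizing sequence the minimizer $u_n$ is only guaranteed to exist by Lemma~\ref{alphap:lem} (for $p_n>1$) or Lemma~\ref{alpha1:lem} (for $p_n=1$), so the statement already presupposes $u_n$ is an actual minimizer; the argument above then applies verbatim. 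If along the sequence both values $p_n=1$ and $p_n>1$ occur, one simply splits into the two subsequences and applies the corresponding estimate, taking the larger of the two constants; note $|\Delta u_n|_1=|\Delta u_n|_T$ makes the two conclusions compatible.

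The only mild subtlety — and what I would flag as the main technical point rather than a genuine obstacle — is the continuity of $r\mapsto |f|_r$ at the relevant exponents for fixed $f\in C_c^\infty(\Omega)$: this is elementary since $|f|$ is bounded and supported in a set of finite measure, so $|f|_r$ is finite for all $r\in[1,\infty)$ and continuous in $r$ by dominated convergence (write $|f|^r=e^{r\log|f|}$ on $\{f\neq0\}$ and dominate by $\max(1,|f|_\infty)^{R}$ for $r$ in a bounded interval $[1,R]$). Thus $\sup_{(p',q')\in K}|\Delta w|_{p'}/|w|_{q'}<\infty$, which delivers the uniform constant $C$ and completes the proof. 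Alternatively, and even more simply, one may bypass the continuity discussion by using the crude bounds $|\Delta w|_{p_n}\leq |\Delta w|_\infty|\Omega|^{1/p_n}\leq |\Delta w|_\infty\max(1,|\Omega|)$ and $|w|_{q_n}\geq$ (a fixed positive lower bound obtained from, e.g., $|w|_{q_n}\to|w|_q>0$, or from Hölder on a fixed ball where $|w|$ is bounded below), which again produces $C$ independent of $n$.
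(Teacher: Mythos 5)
Your proof is correct and follows essentially the same strategy as the paper: fix a single test function $\varphi\in C_c^\infty(\Omega)\setminus\{0\}$, bound $\Lambda_{p_n,q_n}$ by the Rayleigh quotient $|\Delta\varphi|_{p_n}/|\varphi|_{q_n}$, and argue by dominated convergence that the latter converges (hence is uniformly bounded) as $(p_n,q_n)\to(p,q)$, which together with the normalization $|u_n|_{q_n}=1$ gives the claim. One small inaccuracy: in the case $p_n=1$ you cite Lemma~\ref{lemma:Deltau} to write $|\Delta u_n|_1=|\Delta u_n|_T$, but a minimizer $u_n\in BL_0(\Omega)$ provided by Lemma~\ref{alpha1:lem} need not satisfy $\Delta u_n\in L^1(\Omega)$; this is harmless since Lemma~\ref{alpha1:lem} directly gives $|\Delta u_n|_T=\Lambda_{1,q_n}$, which is all you actually use.
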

\begin{proof}
For every $n\in\N$, fix $\varphi\in C^\infty_c(\Omega)\setminus \{0\}$, which, in particular, lies in $W^ {2,p_n}_\Delta(\Omega)$, as well as in $BL_0(\Omega)$. 

Assume that $p_n>1$ for every $n\in\N$. Using that $\Lambda_{p_n,q_n}$ is an infimum and dominated convergence,
\begin{align*}
|\Delta u_n|_{p_n} \leq \frac{|\Delta \varphi|_{p_n}}{|\varphi|_{q_n}} \to \frac{|\Delta \varphi|_p}{|\varphi|_q}
\qquad\text{as }n\to\infty,
\end{align*} 
so that $|\Delta u_n|_{p_n} $ is uniformly bounded.

If, on the other hand, $\beta_n=1$ for infinitely many $n\in\N$, then
\begin{align*}
|\Delta u_n|_T \leq \frac{|\Delta \varphi|_{T}}{|\varphi|_{q_n}} \to \frac{|\Delta \varphi|_{1}}{|\varphi|_q}
\qquad\text{as }n\to\infty,
\end{align*} 
so that $|\Delta u_n|_T$ is also uniformly bounded.
\end{proof}

Next, we show the convergence of minimizers.

\begin{prop}\label{c1:prop2} 
The map
\[
(p,q)\mapsto \Lambda_{p,q}
\]
is continuous for $(p,q)$ satisfying \eqref{eq:lowerboundbeta}.

Let $(p_n,q_n)$ and $(p,q)$  satisfy \eqref{eq:lowerboundbeta} for every $n\in \N$, and let $p_n\to 1$ and $q_n\to q$ as $n\to\infty$. Let $u_n$ be the minimizer for $\Lambda_{p_n,q_n}$ with $|u_n|_{q_n}=1$.
There is a minimizer $u$ of $\Lambda_{p,q}$ such that:
\begin{enumerate}
    \item $u \in  BL_0(\Omega)\cap W^{1,r}_0(\Omega)$ for every $r\in [1,1^*)$, with $|u|_q=1$;   
    \item  $u_n\to u$ in $W^{1,r}_0(\Omega)$ for every $r\in [1,1^*)$ and $|\Delta u_n|_T\to |\Delta u|_T$, up to a subsequence.
\end{enumerate}
\end{prop}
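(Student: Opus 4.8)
The plan is to deduce the continuity of $(p,q)\mapsto\Lambda_{p,q}$ from the two one‑sided estimates $\limsup_n\Lambda_{p_n,q_n}\le\Lambda_{p,q}$ and $\liminf_n\Lambda_{p_n,q_n}\ge\Lambda_{p,q}$ for an arbitrary admissible sequence $(p_n,q_n)\to(p,q)$, and to obtain the convergence of minimizers as a by‑product of the compactness argument used for the $\liminf$ bound in the case $p_n\to 1$. Two elementary facts will be used throughout: on the bounded domain $\Omega$ one has $|f|_s\le|\Omega|^{\frac1s-\frac1t}|f|_t$ for $s\le t$, with $|\Omega|^{\frac1s-\frac1t}\to 1$ whenever $s,t$ tend to a common exponent; and $t\mapsto|f|_t$ is continuous at every exponent where it is finite, so that $|f_n|_{s_n}\to|f|_s$ when $s_n\to s$, $f_n\to f$ a.e. and $(f_n)$ is bounded in some $L^{s'}$ with $s'>s$. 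This is how the simultaneous variation of $p_n$ and $q_n$ gets absorbed.

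\textbf{The $\limsup$ bound.} If $p>1$, fix $\varepsilon>0$ and $w\in W^{2,p}_\Delta(\Omega)$ with $|\Delta w|_p/|w|_q<\Lambda_{p,q}+\varepsilon$. Truncating $\Delta w$ at level $k$ and solving the Dirichlet problem with the truncated data gives $w_k\in W^{2,s}_\Delta(\Omega)$ for every $s$ (using the standing regularity hypotheses on $\Omega$), with $\Delta w_k\in L^\infty(\Omega)$ and $w_k\to w$ in $W^{2,p}_\Delta(\Omega)$ (hence in $L^q(\Omega)$ by compactness), so for $k$ large the ratio of $w_k$ is still below $\Lambda_{p,q}+\varepsilon$; since $\Delta w_k\in L^\infty$, dominated convergence yields $|\Delta w_k|_{p_n}\to|\Delta w_k|_p$ and $|w_k|_{q_n}\to|w_k|_q$, whence $\Lambda_{p_n,q_n}\le|\Delta w_k|_{p_n}/|w_k|_{q_n}\to|\Delta w_k|_p/|w_k|_q<\Lambda_{p,q}+\varepsilon$, and $\varepsilon\to0$ closes the case. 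If $p=1$, then $q\in[1,1^{**})$ by \eqref{eq:lowerboundbeta}, and by Proposition~\ref{prop1.7-part1} the function $G_\Omega(\cdot,x_M)$ achieves $\Lambda_{1,q}=1/|G_\Omega(\cdot,x_M)|_q$. Mollifying the Dirac mass $-\Delta G_\Omega(\cdot,x_M)=\delta_{x_M}$ produces $\rho_k\in C^\infty_c(\Omega)$ with $|\rho_k|_1=1$; the Green potentials $u_k$ of $\rho_k$ lie in $W^{2,s}_\Delta(\Omega)$ for $s$ near $1$, have $|\Delta u_k|_1=1$, and $u_k\to G_\Omega(\cdot,x_M)$ in $L^q(\Omega)$ (by the same dominated‑convergence estimate that gives continuity of $x\mapsto|G_\Omega(\cdot,x)|_q$). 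Then $\Lambda_{p_n,q_n}\le|\Delta u_k|_{p_n}/|u_k|_{q_n}$; letting $n\to\infty$ and then $k\to\infty$ gives $\limsup_n\Lambda_{p_n,q_n}\le 1/|G_\Omega(\cdot,x_M)|_q=\Lambda_{1,q}$.

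\textbf{The $\liminf$ bound and the minimizers.} Let $u_n$ be minimizers with $|u_n|_{q_n}=1$. By Lemma~\ref{bounds:lemma} and $|\Delta u_n|_1\le|\Omega|^{1-\frac1{p_n}}|\Delta u_n|_{p_n}$, the sequence $(u_n)$ is bounded in $BL_0(\Omega)$, and, if $p>1$, in $W^{2,\tilde p}_\Delta(\Omega)$ for every $\tilde p<p$. If $p=1$, Proposition~\ref{compact:prop} gives, along a subsequence, $u_n\to u$ in $L^q(\Omega)$ and in $W^{1,r}_0(\Omega)$ for all $r\in[1,1^*)$, with $u\in BL_0(\Omega)$; hence $|u|_q=\lim_n|u_n|_{q_n}=1$, and lower semicontinuity of $|\Delta\cdot|_T$ (Lemma~\ref{lemma:lsc}) together with the Hölder estimate yields $|\Delta u|_T\le\liminf_n|\Delta u_n|_T\le\liminf_n\Lambda_{p_n,q_n}$, so $\Lambda_{1,q}\le|\Delta u|_T\le\liminf_n\Lambda_{p_n,q_n}$. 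If $p>1$, diagonalizing over $\tilde p\uparrow p$ I extract a weak limit $u\in W^{2,\tilde p}_\Delta(\Omega)$ for all $\tilde p<p$; the compact Sobolev embedding of Lemma~\ref{lemma:Sobolev_embedding} gives $u_n\to u$ in $L^q(\Omega)$, so $|u|_q=1$, and weak lower semicontinuity of $|\Delta\cdot|_{\tilde p}$ combined with $|\Delta u_n|_{\tilde p}\le|\Omega|^{\frac1{\tilde p}-\frac1{p_n}}\Lambda_{p_n,q_n}$ and then $\tilde p\to p$ gives $|\Delta u|_p\le\liminf_n\Lambda_{p_n,q_n}$, hence $\Lambda_{p,q}\le\liminf_n\Lambda_{p_n,q_n}$. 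Together with the $\limsup$ bound this proves continuity. Finally, when $p_n\to1$ the chain $\Lambda_{1,q}\le|\Delta u|_T\le\liminf_n\Lambda_{p_n,q_n}\le\limsup_n\Lambda_{p_n,q_n}\le\Lambda_{1,q}$ collapses to equalities: $u$ is a minimizer of $\Lambda_{1,q}$ with $|u|_q=1$, lying in $BL_0(\Omega)\cap W^{1,r}_0(\Omega)$ for $r\in[1,1^*)$; $u_n\to u$ in $W^{1,r}_0(\Omega)$ follows from the compact embedding, and $|\Delta u_n|_T\to|\Delta u|_T$ by squeezing $\liminf_n|\Delta u_n|_T\ge|\Delta u|_T$ against $\limsup_n|\Delta u_n|_T\le\limsup_n|\Omega|^{1-\frac1{p_n}}\Lambda_{p_n,q_n}=\Lambda_{1,q}=|\Delta u|_T$.

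The main obstacle is the $\limsup$ bound at $p=1$: one must exhibit competitors that are simultaneously close to a $\Lambda_{1,q}$‑minimizer in $L^q(\Omega)$, close to it in the total variation of the Laplacian, and regular enough that their Laplacian lies in $L^{p_n}(\Omega)$ for $p_n$ near $1$, so that dominated convergence in the exponent applies; replacing the minimizer's Laplacian (a measure) by mollified data, taking Green potentials, and using the explicit minimizer from Proposition~\ref{prop1.7-part1} is what makes the double limit go through. A secondary, more mechanical, point is the uniform bookkeeping of the two exponents $p_n$ and $q_n$ varying at once, which is precisely why the Hölder estimates with constants $|\Omega|^{1/s-1/t}\to1$ and the continuity of $t\mapsto|\cdot|_t$ are isolated at the outset.
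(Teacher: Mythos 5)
Your proof is correct, and the $\liminf$/compactness half of the argument (H\"older bound in $|\Omega|^{1-1/p_n}$, uniform $BL_0$ bound, compact embedding, lower semicontinuity, Willem-type domination for $|u_n|_{q_n}\to|u|_q$) is essentially identical to the paper's. The genuine divergence is in the $\limsup$ bound at $p=1$. The paper starts from an arbitrary near-minimizer $v_1\in W^{2,1}_\Delta(\Omega)$ (using that $\Lambda_{1,q}$ is, by~\eqref{alfas}, an infimum over $W^{2,1}_\Delta(\Omega)$) and approximates $\Delta v_1$ in $L^1$ by $C^2(\overline\Omega)$ competitors via the density Lemma~\ref{lemma:densityC^2}; you instead invoke the explicit Green-function minimizer from Proposition~\ref{prop1.7-part1} and mollify the Dirac mass at $x_M$ into smooth, compactly supported data whose Green potentials serve as competitors. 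Both close the double limit; yours trades the density lemma for the structure theorem. This is a real trade: Lemma~\ref{lemma:densityC^2} as stated requires $C^{2,\gamma}$ boundary regularity, whereas your mollification route only needs the standing convexity/$C^{1,\gamma}$ hypothesis (under which Proposition~\ref{prop1.7-part1} and Theorem~\ref{th:equivalence_solutions} already hold), so your argument is slightly more robust with respect to domain regularity. On the other hand, the paper's route is independent of any characterization of the minimizer, which keeps the continuity statement logically decoupled from the Green-function identification. You also spell out the case $p>1$ (truncation of $\Delta w$ for the upper bound, diagonal extraction in $W^{2,\tilde p}_\Delta$ for the lower bound), which the paper leaves implicit; that is a useful completion, though one small simplification is available there: boundedness in a single $W^{2,\tilde p_0}_\Delta$ with $\tilde p_0<p$ close to $p$ already pins down one weak limit belonging to every $W^{2,\tilde p}_\Delta$ with $\tilde p<p$, so no genuine diagonalization over $\tilde p$ is needed.
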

\begin{proof}
We first focus on a situation where $p_n>1$ for all $n\in\N$, so that minimizers of $\Lambda_{p_n,q_n}$ belong to $W^ {2,p_n}_\Delta(\Omega)$. Recall also that $q<1^{**}$ in this case (Remark \eqref{rem:beta=1}).  Then
\begin{equation}\label{aux:conv_pn+1}
|\Delta u_n|_1 \leq  |\Omega|^\frac{p_n-1}{p_n} |\Delta u_n|_{p_n}=|\Omega|^\frac{p_n-1}{p_n}\Lambda_{p_n,q_n}.
\end{equation}
Therefore, by Lemma~\ref{bounds:lemma} and Lemma~\ref{equiv:lem}-2, we have that ${(u_n)}_{n\in\N}$ is uniformly bounded in $W^{2,1}_\Delta(\Omega)$. By Proposition~\ref{compact:prop}  (see also Remark~\ref{rem:compactness}), there is $u\in W^{1,r}_0(\Omega)$ such that, up to extracting a subsequence, $u_n\to u$  strongly in $W^{1,r}_0(\Omega)$ for every $r\in[1,1^*)$ and strongly in  $L^{t}(\Omega)$ for $t\in [1,1^{**})$. Moreover, by Lemma~\ref{lemma:lsc}, $u\in BL_0(\Omega)$ and
\begin{equation}\label{eq:lowersemicont_aux}
|\Delta u|_T\leq \liminf_{n\to\infty} |\Delta u_n|_T=\liminf_{n\to\infty} |\Delta u_n|_1.
\end{equation}
Let $\bar t=\frac{q+1^{**}}{2}$. For large $n$, we have $q_n<\bar t$ and $u_n\to u$ strongly in $L^{\bar t}(\Omega)$. Hence, up to a sequence, $u_n\to u$ a.e., and there exists $h\in L^{\bar t}(\Omega)$ such that $|u_n|\leq h$ a.e. in $\Omega$ (see, for example, \cite[Lemma A.1]{W97}). By dominated convergence, we have that
\begin{equation}\label{eq_normLq=1}
1=|u_n|_{q_n}\to |u|_{q}.
\end{equation}
Fix $\eps>0$. By the definition of $\Lambda_{1,q}$ (see \eqref{alfas}), there is $v_1\in W^{2,1}_\Delta(\Omega)\setminus \{0\}$ such that
    \[
    \frac{|\Delta v_1|_T}{|v_1|_{q}}\leq  \Lambda_{1,q}+\eps.
    \]
     Since $v_1\in W^{2,1}_\Delta(\Omega)$, we have $|\Delta v_1|_T=|\Delta v_1|_1$, and (by Lemma~\ref{lemma:densityC^2}) there is $(v_k)\subset C^2(\overline\Omega)\subset W^{2,p_n}(\Omega)$ for every $n$ such that $\Delta v_k \to \Delta v_1$ in $L^1(\Omega)$ and (again by compact embeddings, as $q<1^{**}$), $v_k\to v$ in $L^{q}(\Omega)$.  Then
     \begin{align*}
     \Lambda_{1,q}+\eps \geq & \frac{|\Delta v_1|_1}{|v_1|_{q}}
     =\lim_{k\to\infty} \frac{|\Delta v_k |_1}{|v_k|_{q}} =\lim_{k\to\infty} \lim_{n\to \infty} \frac{|\Delta v_k |_{p_n}}{|v_k|_{q_n}} \geq \lim_{k\to\infty} \lim_{n\to \infty}\Lambda_{p_n,q_n}\\
     &=\lim_{n\to \infty}  \Lambda_{p_n,q_n} = \lim_{n\to \infty}|\Delta u_n |_{p_n} \geq  \lim_{n\to \infty}|\Omega|^{-\frac{p_n-1}{p_n}} | \Delta u_n |_1\\
     &=\lim_{n\to \infty}|\Delta u_n|_1\geq |\Delta u|_T\geq  \Lambda_{1,q},
     \end{align*}
where we have used Lebesgue dominated convergence, \eqref{aux:conv_pn+1}, H\"older's inequality, that $u_n$ is a minimizer for $\Lambda_{p_n,q_n}$,  \eqref{eq:lowersemicont_aux} and \eqref{eq_normLq=1}. Summarizing, $|u|_{q}=1$,
\[
\Lambda_{1,q}\leq |\Delta u_1|_T  \leq \lim_{n\to \infty} |\Delta u_n|_1= \lim_{n\to \infty}  \Lambda_{p_n,q_n}\leq  \Lambda_{1,q}+\eps \qquad \text{ for every } \eps>0.
\]
By letting $\eps\to 0$, equality holds true and the conclusion of the lemma follows for case 1 under the situation $p_n>1$.

Assume now that $p=p_n=1$ for every $n$. Then the minimizer $u_n$ of $\Lambda_{p_n,q_n}=\Lambda_{1,q_n}$ belongs to $BL_0(\Omega)$ and not necessarily to $W^{2,1}_\Delta(\Omega)$. By Lemma~\ref{bounds:lemma}, we have
\[
|\Delta u_n|_T\leq C
\]
and we can repeat almost word by word the proof of the previous case, simply replacing $|\Delta \cdot |_{p_n}$ with $|\Delta \cdot|_T$.
\end{proof}

\begin{proof}[Proof of Theorem~\ref{thm:nonlineareigenvalue_pq}]
Parts (a) and (b) are a direct consequence of Lemmas~\ref{alpha1:lem},~\ref{alphap:lem} and Proposition~\ref{prop:existence_of_les}, while the last paragraph is a consequence of Proposition~\ref{c1:prop2}.
\end{proof}

\begin{remark}\label{rmk:convergencePRT} In the case $p=1$, since $W^{2,1}(\Omega)$ is not reflexive, we cannot say that $u_n\rightharpoonup u$ weakly in $W^{2,1}(\Omega)$. This kind of convergence plays a key role in the asymptotic analysis of solutions. To overcome this difficulty, we use the space $BL_0(\Omega)$, as in \cite{PRT12}. In this paper, the authors studied in detail the quantity $\Lambda_{1,1}$, which is associated with an  eigenvalue problem for the $1$-biharmonic operator:
\begin{equation}\label{eq:1biharmoniclinea}
\Delta \left(\frac{\Delta u}{|\Delta u|}\right)=\Lambda_{1,1}\frac{u}{|u|} \text{ in } \Omega,\qquad u=\frac{\Delta u}{|\Delta u|}=0 \text{ in }\partial \Omega.
\end{equation}
See \cite{PRT12} for the notion of solution, which does not fall in the framework of our Section~\ref{sec:1-bilaplacian}, since the function $G_1(u)=|u|_1$ is not of class $C^1$. As a particular case of our study, we show that, as $p,q\to 1$ ($p\geq 1$), $\Lambda_{p,q}\to \Lambda_{1,1}$, and that solutions of 
\[
\Delta \left(|\Delta u|^{p-2}\Delta u\right)=\Lambda_{p,q}|u|^{q-2}u \text{ in } \Omega,\qquad u=\frac{\Delta u}{|\Delta u|}=0 \text{ in }\partial \Omega,
\]
 (which minimize $\Lambda_{p,q}$), converge to solutions of \eqref{eq:1biharmoniclinea} (which minimize $\Lambda_{1,1}$).
\end{remark}

\section{Hamiltonian systems and limit profiles}\label{sec:Hamiltonian}

Let $\Omega\subset \R^N$ be a bounded domain of class $C^{2,\gamma}$. For $\a,\b$ satisfying \eqref{subcritical}, by \cite[Theorem 4.5]{BMT14}, there is a positive least-energy strong solution $(U_\beta,V_\beta)$ of the system 
\begin{align}\label{hsystem}
 -\Delta U_\b = |V_\b|^{\b-1}V_\b
 \quad \text{ in }\Omega,\qquad
 -\Delta V_\b = |U_\b|^{\a-1}U_\b
 \quad \text{ in }\Omega,\qquad 
 U_\b=V_\b= 0\quad \text{ on }\partial \Omega.
 \end{align}
More precisely, one shows that there exists $U_\beta\in W^{2,\frac{\beta}{\beta+1}}(\Omega)\cap W^{1,\frac{\beta+1}{\beta}}_0(\Omega)$ such that
\[
J_{\alpha,\beta}(U_\beta)=\inf\{J_{\a,\b}(U):\ U \text{ is a nontrivial critical point of } J_{\alpha,\beta}\},
\]
where $J_{\alpha,\beta}$ is as in \eqref{J:intro}. In particular, 
\[
\Delta(|\Delta U_\b|^{\frac{1}{\b}-1}\Delta U_\b)=|U_\b|^{\a-1}U_\b\quad \text{ in }\Omega,\qquad U_\b=\Delta U_\b=0\quad \text{ on }\partial \Omega
\]
and, for $V_\beta:=-|\Delta U_\beta|^{\frac{1}{\beta}-1}\Delta U_\beta$, we have $V_\beta\in W^{2,\frac{\a}{\a+1}}(\Omega)\cap W^{1,\frac{\a+1}{\beta}}_0(\Omega)$ and $(U_\beta,V_\beta)$ is a (strong) solution of \eqref{hsystem:intro}. Moreover, $U_\beta\cdot V_\beta>0$ in $\Omega$, and a standard bootstrap argument yields that
\begin{align}\label{class:sol}
(U_\b,V_\b)\in C^{2,\sigma_1}(\Omega)\times C^{2,\sigma_2}(\Omega)    
\end{align}
for some $\sigma_1,\sigma_2\in(0,1)$. For the details, see \cite[Section 4]{BMT14} and references therein.

\begin{theorem}\label{Uq:conv:thm}
 There is a (least-energy) solution $U_\infty\in BL_0(\Omega)$ of \eqref{1bi} such that, up to a subsequence, as $\beta\to \infty$:
\begin{align}
U_\b\to U_\infty\quad \text{in $W^{1,r}_0(\Omega)$ for every $r\in[1,1^*)$},\quad \text{ and } \quad |\Delta U_\beta|_1\to |\Delta U_\infty|_T.
\end{align}
Moreover, $U_\infty=\Lambda_{1,\a+1}^\frac{1}{\a} u_1$, where $u_1$ is a minimizer for $\Lambda_{1,\a+1}$, given in \eqref{alfas}, with $|u_1|_{\a+1}=1$.
\end{theorem}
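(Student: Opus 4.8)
The plan is to transfer the convergence of the minimizers $u_n$ of $\Lambda_{p_n,q_n}$ (which is already established in Proposition~\ref{c1:prop2}) to the least-energy solutions $(U_\beta,V_\beta)$ of the Hamiltonian system, via the correspondence between the two. First I would set up the dictionary: choose $p_n=\frac{\beta_n+1}{\beta_n}\to 1$ and $q=q_n=\alpha+1$ (fixed), which satisfy \eqref{eq:lowerboundbeta} precisely because $\alpha\in(0,1^{**}-1)$, so that Remark~\ref{rem:beta=1} applies. The reduction-by-inversion method (as recalled around \eqref{J:intro}) tells us that the least-energy solution $U_\beta$ is, up to a positive multiplicative constant, a minimizer of the Rayleigh quotient $\Lambda_{p_n,q_n}$; more precisely, if $u_n$ is the normalized minimizer with $|u_n|_{q_n}=1$, then $U_{\beta_n}=t_n u_n$ for a suitable $t_n>0$ determined by the Euler-Lagrange scaling. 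I would compute $t_n$ explicitly by plugging $U_{\beta_n}=t_n u_n$ into the equation $\Delta(|\Delta U_{\beta_n}|^{1/\beta_n-1}\Delta U_{\beta_n})=|U_{\beta_n}|^{\alpha-1}U_{\beta_n}$: matching homogeneities gives $t_n^{p_n-1}\Lambda_{p_n,q_n}=t_n^{q_n-1}$, i.e. $t_n=\Lambda_{p_n,q_n}^{1/(q_n-p_n)}$, which converges to $\Lambda_{1,\alpha+1}^{1/\alpha}$ by the continuity of $(p,q)\mapsto\Lambda_{p,q}$ from Theorem~\ref{thm:nonlineareigenvalue_pq}.

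Next I would invoke Proposition~\ref{c1:prop2} directly: up to a subsequence, $u_n\to u_1$ in $W^{1,r}_0(\Omega)$ for every $r\in[1,1^*)$ with $|\Delta u_n|_T\to|\Delta u_1|_T$, where $u_1$ is a minimizer of $\Lambda_{1,\alpha+1}$ normalized by $|u_1|_{\alpha+1}=1$. Combining with $t_n\to\Lambda_{1,\alpha+1}^{1/\alpha}$, I get $U_{\beta_n}=t_nu_n\to\Lambda_{1,\alpha+1}^{1/\alpha}u_1=:U_\infty$ in $W^{1,r}_0(\Omega)$ for $r\in[1,1^*)$. For the total variation convergence, note $|\Delta U_{\beta_n}|_1=t_n|\Delta u_n|_1=t_n|\Delta u_n|_{p_n}\cdot(\text{something})$—more carefully, since $u_n\in W^{2,p_n}_\Delta(\Omega)$ we have $|\Delta u_n|_1\le|\Omega|^{(p_n-1)/p_n}|\Delta u_n|_{p_n}=|\Omega|^{(p_n-1)/p_n}\Lambda_{p_n,q_n}$, and the chain of inequalities inside the proof of Proposition~\ref{c1:prop2} actually forces $|\Delta u_n|_1\to|\Delta u_1|_T$; hence $|\Delta U_{\beta_n}|_1=t_n|\Delta u_n|_1\to\Lambda_{1,\alpha+1}^{1/\alpha}|\Delta u_1|_T=|\Delta U_\infty|_T$ by homogeneity of $|\Delta\cdot|_T$. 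Finally, that $U_\infty$ is a least-energy solution of \eqref{1bi} is exactly the content of Proposition~\ref{prop:existence_of_les} with $q=\alpha+1$: the scaled minimizer $\Lambda_{1,\alpha+1}^{1/(q-1)}u_1=\Lambda_{1,\alpha+1}^{1/\alpha}u_1$ is a least-energy solution, and this is precisely $U_\infty$.

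I would write out one subtle point carefully: Proposition~\ref{c1:prop2} is stated for minimizers of $\Lambda_{p_n,q_n}$, but one must make sure that the $u_n$ arising from the least-energy solutions $U_{\beta_n}$ are indeed such minimizers (and normalized correctly). This is where \cite[Theorem 4.5]{BMT14} and the reduction-by-inversion framework enter: the least-energy critical value $c_{\alpha,\beta}$ corresponds, after the substitution $U\mapsto U/|U|_{q}$, exactly to the constrained minimization defining $\Lambda_{p,q}$, so $u_n:=U_{\beta_n}/|U_{\beta_n}|_{q}$ minimizes $\Lambda_{p_n,q_n}$. One also needs $U_{\beta_n}\not\equiv 0$, which holds since these are nontrivial least-energy solutions. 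The main obstacle is not conceptual but bookkeeping: keeping track of the various exponents $p_n=\frac{\beta_n+1}{\beta_n}$, $q_n=\alpha+1$, the normalization constant $t_n$, and verifying that the homogeneity bookkeeping in the Euler-Lagrange equation produces exactly the exponent $\frac{1}{\alpha}$ claimed in the statement $U_\infty=\Lambda_{1,\alpha+1}^{1/\alpha}u_1$. Everything else follows by assembling Theorem~\ref{thm:nonlineareigenvalue_pq}, Proposition~\ref{c1:prop2}, and Proposition~\ref{prop:existence_of_les}.
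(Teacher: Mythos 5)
Your proposal is correct and follows essentially the same route as the paper: identify $u_\beta := t_\beta^{-1} U_\beta$ (with $t_\beta = \Lambda_{p_\beta,q}^{1/(q-p_\beta)}$, $p_\beta=\tfrac{\beta+1}{\beta}$, $q=\alpha+1$) as the normalized minimizer of $\Lambda_{p_\beta,q}$, invoke Proposition~\ref{c1:prop2} for the convergence $u_\beta\to u_1$ and $|\Delta u_\beta|_T\to|\Delta u_1|_T$, and conclude via Proposition~\ref{prop:existence_of_les}. The only difference is cosmetic: the paper simply cites \cite[Lemma~4.8]{BMT14} for the exact normalization $u_\beta=\Lambda_{1+\frac1\beta,\alpha+1}^{-\frac{\beta}{\alpha\beta-1}}U_\beta$, whereas you rederive the same exponent $\tfrac{1}{q-p_\beta}=\tfrac{\beta}{\alpha\beta-1}$ by matching homogeneities in the Euler--Lagrange equation, which is a fine self-contained alternative.
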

\begin{proof}
 By \cite[Lemma 4.8]{BMT14},
\begin{align}\label{eq:scaling}
u_\b=\Lambda_{1+\frac1\b,\a+1}^{-\frac{\b}{\a\b-1}}U_\b
\end{align}
is a minimizer for $\Lambda_{1+\frac1\b,\a+1}$ with $|u_\beta|_{\a+1}=1$. Then, by Proposition~\ref{c1:prop2},
\begin{align*}
U_\b
=\Lambda_{1+\frac1\beta,\a+1}^{\frac{\b}{\a\b-1}}u_\beta
\to \Lambda_{1,\a+1}^\frac{1}{\a}u_1=U_\infty\quad \text{ in $W^{1,r}_0(\Omega)$ as $\b\to\infty$}
\end{align*}
and
\[
|\Delta U_\beta|_1=\Lambda_{1+\frac1\beta,\a+1}^{\frac{\b}{\a\b-1}}|\Delta u_\beta|_1\to  \Lambda_{1,\a+1}^\frac{1}{\a}|u_1|_T =|\Delta U_\infty|_T.
\]

By Proposition~\ref{prop:existence_of_les}, $U_\infty$ is a least-energy solution of \eqref{1bi}.
\end{proof}

Theorem~\ref{Uq:conv:thm} also yields the convergence for the component $V_\b$.
\begin{coro}\label{Vq:conv:coro}
There is $V_\infty\in W^{2,1+\frac1\a}(\Omega)\cap W_0^{1,1+\frac1\a}(\Omega)$ such that
\begin{align*}
    V_\b\rightharpoonup V_\infty\qquad \text{weakly in }W^{2,1+\frac{1}{\alpha}}(\Omega).
\end{align*}

In particular:
\begin{enumerate}
\item if $\alpha\in (0,1^*-1)$, then $V_\infty\in C^{1, \sigma(\alpha)}(\overline \Omega)$ and 
\[
V_\beta\to V_\infty \text{ in } C^{1,\sigma}(\overline \Omega),
\text{ for every  } 0<\sigma<\sigma(\alpha):=1-\frac{N\alpha}{\alpha+1};
\]
\item if $\alpha=1^*-1=\frac{1}{N-1}$, then $V_\infty\in C^{0, \gamma}(\overline \Omega)$ and
\[
V_\beta\to V_\infty\quad  \text{ strongly in } W^{1,\eta}_0(\Omega) \text{ and } C^{0,\sigma}(\overline \Omega), \text{ for every $\eta\geq 1$ and $0<\sigma<1$;}
\]
\item if $\alpha\in (1^*-1,1^{**}-1)$, then  $V_\infty\in W^{1,\frac{N(\alpha+1)}{\alpha(N-1)-1}}_0(\Omega) \cap C^{0, \sigma(\alpha)+1}(\overline \Omega)$ and 
\begin{multline*}
V_\beta\to V_\infty\quad  \text{ strongly in } W^{1,\eta}_0(\Omega) \text{ and } C^{0,\sigma}(\overline \Omega), \\
\text{ for every } 1\leq \eta<\frac{N(\alpha+1)}{\alpha(N-1)-1}\text{ and } 0<\sigma<\sigma
(\a)+1.
\end{multline*}
\end{enumerate}
\end{coro}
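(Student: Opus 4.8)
The plan is to read off the behaviour of $V_\b$ directly from the identity $-\Delta V_\b=|U_\b|^{\a-1}U_\b$, using only the convergence of $U_\b$ already proved in Theorem~\ref{Uq:conv:thm}, the $L^p$-theory for the Dirichlet Laplacian, and standard Sobolev embeddings. The first step is to record the convergence of the right-hand sides along the subsequence of Theorem~\ref{Uq:conv:thm}. Inspecting the proof of that theorem (which proceeds through Proposition~\ref{c1:prop2}), along that same subsequence one has, besides $U_\b\to U_\infty$ in $W^{1,r}_0(\Omega)$ for $r\in[1,1^*)$, also $U_\b\to U_\infty$ strongly in $L^t(\Omega)$ for every $t\in[1,1^{**})$; this follows from the boundedness of $(U_\b)$ in $W^{2,1}_\Delta(\Omega)$ and the compact embeddings of Proposition~\ref{compact:prop}. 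Since $\a\in(0,1^{**}-1)$, the exponent $t=\a+1$ is admissible, so $U_\b\to U_\infty$ in $L^{\a+1}(\Omega)$. I then upgrade this to $|U_\b|^{\a-1}U_\b\to|U_\infty|^{\a-1}U_\infty$ strongly in $L^{1+\frac1\a}(\Omega)=L^{\frac{\a+1}{\a}}(\Omega)$: passing to a further subsequence along which $U_\b\to U_\infty$ a.e.\ with $|U_\b|\le h$ for some $h\in L^{\a+1}(\Omega)$, the continuity of $s\mapsto|s|^{\a-1}s$ and dominated convergence give the claim (the same device as in the proof of Proposition~\ref{c1:prop2}); by uniqueness of the limit the whole subsequence converges.

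Next I would pass from the data to $V_\b$. Write $p:=1+\frac1\a=\frac{\a+1}{\a}>1$. Since $\Omega$ is of class $C^{2,\gamma}\subset C^{1,1}$, the Calder\'{o}n--Zygmund / Agmon--Douglis--Nirenberg $L^p$-estimates show that $(-\Delta)^{-1}$, the solution operator of the Dirichlet problem, is a bounded linear map $L^{p}(\Omega)\to W^{2,p}(\Omega)\cap W^{1,p}_0(\Omega)$. Define $V_\infty:=(-\Delta)^{-1}\big(|U_\infty|^{\a-1}U_\infty\big)$, the unique solution in $W^{2,1+\frac1\a}(\Omega)\cap W_0^{1,1+\frac1\a}(\Omega)$ of $-\Delta V_\infty=|U_\infty|^{\a-1}U_\infty$ in $\Omega$, $V_\infty=0$ on $\partial\Omega$. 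Applying the bounded operator $(-\Delta)^{-1}$ to the convergence of the previous step yields $V_\b\to V_\infty$ strongly, hence weakly, in $W^{2,1+\frac1\a}(\Omega)$.

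The three refined statements then follow from the Sobolev embeddings of $W^{2,p}(\Omega)$ with $p=\frac{\a+1}{\a}$, for which $\frac Np=\frac{N\a}{\a+1}$, so that $1-\frac Np=\sigma(\a)$ and $2-\frac Np=\sigma(\a)+1$. If $\a<1^*-1$, then $(N-1)\a<1$, i.e.\ $p>N$, so $W^{2,p}(\Omega)\hookrightarrow C^{1,\sigma(\a)}(\overline\Omega)$ continuously, and $V_\b\to V_\infty$ in $C^{1,\sigma}(\overline\Omega)$ for every $\sigma<\sigma(\a)$, which is case (1). If $\a=1^*-1=\frac1{N-1}$, then $p=N$, so $W^{2,N}(\Omega)\hookrightarrow W^{1,\eta}(\Omega)$ for every $\eta\in[1,\infty)$ and $W^{2,N}(\Omega)\hookrightarrow C^{0,\sigma}(\overline\Omega)$ for every $\sigma\in(0,1)$, giving case (2). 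If $1^*-1<\a<1^{**}-1$, then $\frac N2<p<N$, so $W^{2,p}(\Omega)\hookrightarrow W^{1,p^*}(\Omega)$ with $p^*=\frac{Np}{N-p}=\frac{N(\a+1)}{\a(N-1)-1}$ and $W^{2,p}(\Omega)\hookrightarrow C^{0,\sigma(\a)+1}(\overline\Omega)$; hence $V_\b\to V_\infty$ strongly in $W^{1,\eta}_0(\Omega)$ for $1\le\eta<p^*$ (the space $W^{1,\eta}_0(\Omega)$ being closed in $W^{1,\eta}(\Omega)$) and in $C^{0,\sigma}(\overline\Omega)$ for $\sigma<\sigma(\a)+1$, giving case (3). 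In every case one checks directly that $p^*>1^*=\frac N{N-1}$, so $\eta$ may indeed be taken strictly larger than $1^*$, as claimed in Theorem~\ref{Uq:conv:thm:intro}.

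The main obstacle here is nothing deep: it is the exponent bookkeeping that makes the chain work, namely checking that $\a+1<1^{**}$ (so the strong $L^{\a+1}$-convergence of $U_\b$ is available from Proposition~\ref{c1:prop2}) and that $1+\frac1\a>1$ (so $L^p$-elliptic regularity applies with a genuine $W^{2,p}$ space, $p>1$), and that these remain compatible throughout $\a\in(0,1^{**}-1)$. The one analytically delicate point is the continuity of the superposition map $L^{\a+1}(\Omega)\to L^{1+\frac1\a}(\Omega)$, $w\mapsto|w|^{\a-1}w$, at precisely these exponents; it is handled by the dominated-convergence argument above, and with it in hand the remainder is routine.
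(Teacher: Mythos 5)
Your proof is correct, and it takes a genuinely different (and in fact strictly stronger) route than the paper's. The paper's proof of this corollary argues only by \emph{boundedness}: using the scaling identity $U_\b=\Lambda_{1+1/\b,\a+1}^{\b/(\a\b-1)}u_\b$ with $|u_\b|_{\a+1}=1$, it gets a uniform bound $|U_\b|_{\a+1}\le C_1$, then invokes $L^p$-elliptic estimates to bound $\|V_\b\|_{W^{2,1+1/\a}}$ uniformly, extracts a \emph{weakly} convergent subsequence, and obtains the strong convergences in items (1)--(3) from the \emph{compact} parts of the Sobolev embeddings of $W^{2,p}$ (hence the strict inequalities $\sigma<\sigma(\a)$, $\eta<p^*$). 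You instead observe that along the subsequence $U_\b\to U_\infty$ strongly in $L^{\a+1}$ (available from the proof of Proposition~\ref{c1:prop2}), that the Nemytskii map $w\mapsto|w|^{\a-1}w$ is continuous from $L^{\a+1}$ to $L^{(\a+1)/\a}$, and that $(-\Delta)^{-1}$ is a bounded linear operator $L^p\to W^{2,p}\cap W^{1,p}_0$; composing these gives \emph{strong} convergence $V_\b\to V_\infty$ in $W^{2,1+1/\a}(\Omega)$, from which the refined conclusions follow from the \emph{continuous} embeddings alone. Your argument thus buys a stronger statement (strong $W^{2,p}$ convergence, and in items (1) and (3) even the endpoint exponents $\sigma=\sigma(\a)$, $\eta=p^*$) and at the same time explicitly identifies the limit $V_\infty$ as the solution of $-\Delta V_\infty=|U_\infty|^{\a-1}U_\infty$, something the paper defers to the proof of Theorem~\ref{Uq:conv:thm:intro}. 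The exponent bookkeeping you do (the equivalences $p>N\iff\a<1^*-1$, $p=N\iff\a=1^*-1$, $N/2<p<N\iff 1^*-1<\a<1^{**}-1$, and $p^*=\frac{N(\a+1)}{\a(N-1)-1}$) matches the paper's. The only caveat is that your appeal to the strong $L^{\a+1}$ convergence of $U_\b$ requires quoting the \emph{proof} of Proposition~\ref{c1:prop2} rather than the \emph{statement} of Theorem~\ref{Uq:conv:thm}; you flag this yourself, and it is indeed there, so the argument is sound.
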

\begin{proof}
Since $(U_\b,V_\b)$ is a least-energy solution of \eqref{hsystem}, there is a constant $C_1>0$ independent of $\b$ such that $|U_\b|_{\a+1}<C_1$. Indeed, recalling \eqref{eq:scaling}, we have
\[
|U_\beta|_{\a+1}=\Lambda_{1+\frac{1}{\b},\a+1}^\frac{\b}{\a\b-1}|u_\b|_{\a+1}=\Lambda_{1+\frac{1}{\b},\a+1}^\frac{\b}{\a\b-1},
\]
which is bounded for $\a$ fixed and large $\b$, by Proposition~\ref{c1:prop2}.

Then, by elliptic regularity, there is $C_2>0$ independent of $q$ such that 
\[
\|V_\b\|_{W^{2,1+\frac1\a}}\leq C_2 \big||U_\b|^\a\big|_{1+\frac1\a}=C_2|U_\b|_{\a+1}^{\a}<C_2\,C_1^{p}. 
\]
Then, there is $V_\infty\in W^{2,1+\frac1\a}(\Omega)\cap W_0^{1,1+\frac1\a}(\Omega)$ with $V_\b\weakto V_\infty$ weakly in $W^{2,1+\frac1\a}(\Omega)$.
The other statements follow from Sobolev embeddings (Lemma~\ref{lemma:Sobolev_embedding}), observing that $2\frac{\alpha+1}{\alpha}>N$ if and only if $\alpha<1^{**}-1$, that
\[
0< 1-\frac{N\alpha}{\alpha+1}<1 \iff \alpha < 1^*-1, \quad \text{ and } \quad \left(\frac{\alpha+1}{\a}\right)^*=\frac{N(\alpha+1)}{\alpha(N-1)-1}.
\]
\end{proof}

We are ready to show Theorem~\ref{Uq:conv:thm:intro}.

\begin{proof}[Proof of Theorem~\ref{Uq:conv:thm:intro}]
The theorem follows from Theorem~\ref{Uq:conv:thm} and Corollary~\ref{Vq:conv:coro}.  Note that, since $U_\b\to U_\infty$ in $W^{1,r}_0(\Omega)$ for all $r\in[1,1^*)$, then $U^\a_\b\to U^\a_\infty$ in $L^t(\Omega)$ for all $t\in [1,1^{**}/\a)$, therefore, for a.e. $x\in \Omega$, 
\begin{align*}
   V_\infty(x)
   =\lim_{\b\to\infty} V_\b(x) 
   =\lim_{\b\to\infty} \int_{\Omega} G_{\Omega}(x,y)\,U_\b^\a(y)\;dy
   =\int_{\Omega} G_{\Omega}(x,y)\,U_\infty^\alpha(y)\;dy,
\end{align*}
where $G_{\Omega}$ denotes the Green function of the Dirichlet Laplacian in $\Omega$ and where we have used that it lies in the dual of $L^t(\Omega)$. \end{proof}

\section{The case of the ball}\label{ball:sec}

In the case of the ball some explicit formulas can be obtained.  We collect some auxiliary lemmas first. Let, for $x\in \overline{B_1}\backslash \{0\}$,
$N\geq 3$. 
\begin{align}\label{F:def}
    G_{B_1}(x,0)=c_N(|x|^{2-N}-1),\qquad c_N=\frac{1}{(N-2)|\partial B_1|}=\frac{\Gamma(\frac{N}{2})}{2(N-2)\pi^{N/2}}.\end{align}
In particular, 
\begin{align}\label{FunSol}
-\Delta G_{B_1}(\cdot ,0) = \delta_0\quad \text{in }B_1,
\qquad G_{B_1}(\cdot ,0)=0\quad\text{on }\partial B_1.     
\end{align}

\begin{lemma}\label{Fnorm:lem}
Let $\Omega=B_1$. Then
\begin{align*}
    |\Delta G_{B_1}(\cdot ,0)|_T=1\quad \text{ and }\quad |G_{B_1}(\cdot ,0)|_q&=c_N\left(\frac{\pi^{N/2}}{\Gamma(\frac{N}2+1)}
    \frac{\Gamma(\frac{N}{N-2}-q)\,\Gamma(q+1)}{\Gamma(\frac{N}{N-2})}\right)^\frac{1}{q}.
\end{align*}
\end{lemma}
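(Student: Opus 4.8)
The plan is to prove the two identities in Lemma~\ref{Fnorm:lem} separately, both by direct computation using the explicit formula \eqref{F:def} for $G_{B_1}(\cdot,0)$.

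For the first identity, $|\Delta G_{B_1}(\cdot,0)|_T = 1$: I would argue exactly as in the proof of Proposition~\ref{prop1.7-part1}, using \eqref{FunSol}. Since $-\Delta G_{B_1}(\cdot,0) = \delta_0$ as a distribution, for any $\varphi \in C^\infty_c(B_1)$ we have $\int_{B_1} G_{B_1}(x,0)\,\Delta\varphi(x)\,dx = -\varphi(0)$. Hence
\[
|\Delta G_{B_1}(\cdot,0)|_T = \sup\big\{-\varphi(0) : \varphi \in C^\infty_c(B_1),\ |\varphi|_\infty \leq 1\big\} = 1,
\]
where the supremum is $1$ because we may pick test functions with $\varphi(0)$ arbitrarily close to $-1$. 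One should also note that $G_{B_1}(\cdot,0) \in BL_0(B_1)$ so that the total-variation notation is meaningful; this is immediate since $G_{B_1}(\cdot,0) \in W^{1,r}_0(B_1)$ for $r < 1^*$ and $|\Delta G_{B_1}(\cdot,0)|_T < \infty$ by the computation above.

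For the second identity, the task is the elementary integral
\[
|G_{B_1}(\cdot,0)|_q^q = c_N^q \int_{B_1} \big(|x|^{2-N} - 1\big)^q\,dx.
\]
I would pass to polar coordinates, writing the integral as $|\partial B_1| \int_0^1 (r^{2-N}-1)^q\, r^{N-1}\,dr$, then substitute $r = s^{1/(N-2)}$ (or equivalently $t = r^{N-2}$) to bring it to a Beta-function form $\int_0^1 (t^{-1}-1)^q\, t^{a}\,(\text{Jacobian})\,dt = \int_0^1 (1-t)^q\, t^{b}\,dt = B(q+1, b+1)$ for the appropriate exponent $b$, and finally convert $B(q+1,b+1) = \Gamma(q+1)\Gamma(b+1)/\Gamma(q+b+2)$; tracking the exponents carefully should produce $\Gamma(q+1)\Gamma(\tfrac{N}{N-2}-q)/\Gamma(\tfrac{N}{N-2})$ up to the constant $|\partial B_1|/(N/2) \cdot (\text{powers of }\pi)$ which collapses to $\pi^{N/2}/\Gamma(\tfrac N2+1)$ using $|\partial B_1| = 2\pi^{N/2}/\Gamma(\tfrac N2)$ and $\tfrac N2\Gamma(\tfrac N2) = \Gamma(\tfrac N2+1)$. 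The convergence of the integral near $r=0$ requires $q(N-2) < N$, i.e. $q < 1^{**}$, which is exactly our standing assumption $q \in [1,1^{**})$, and this is also what makes $\tfrac{N}{N-2}-q > 0$ so the Gamma function is well-defined.

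The main obstacle — really the only nontrivial point — is bookkeeping the constants and exponents in the substitution so that the final expression matches the claimed formula exactly; there is no conceptual difficulty, but it is easy to misplace a factor of $N-2$ or a power of $\pi$. I would double-check the result by testing the limiting case $q \to 1$ against the known value of the torsion function $\Psi(0) = |G_{B_1}(\cdot,0)|_1$ for the ball, and by verifying dimensional/homogeneity consistency of the exponents on both sides.
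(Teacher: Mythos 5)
Your proposal is correct and follows essentially the same route as the paper: the total variation identity is obtained from the distributional identity $-\Delta G_{B_1}(\cdot,0)=\delta_0$ together with the supremum characterization, and the $L^q$ norm is computed in polar coordinates via the substitution $\tau=t^{N-2}$, reduction to a Beta integral, and the standard Gamma-function identities $B(a,b)=\Gamma(a)\Gamma(b)/\Gamma(a+b)$, $z\Gamma(z)=\Gamma(z+1)$, and $|\partial B_1|=2\pi^{N/2}/\Gamma(N/2)$. Your sanity check against the torsion function at $q\to 1^+$ is a nice addition that the paper carries out in Remark~\ref{L11:rmk}.
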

\begin{proof}
    Passing to spherical coordinates and applying the change of variables $\tau=t^{N-2}$, 
    \begin{align*}
    |G_{B_1}(\cdot ,0)|_q^q &=
    c_N^q\int_B\big(|x|^{2-N}-1\big)^q\;dx=
    c_N^q|\partial B_1|\int_0^1\big(t^{2-N}-1\big)^qt^{N-1}\;dt \\
    &=
    c_N^q|\partial B_1|
    \int_0^1t^{N-1-(N-2)q}\big(1-t^{N-2}\big)^q\;dt=
    \frac{c_N^q|\partial B_1|}{N-2}
    \int_0^1\tau^{\frac{N-1}{N-2}-q}\big(1-\tau\big)^{q}\tau^{-\frac{N-3}{N-2}}\;d\tau\\
    &=
    \frac{c_N^q|\partial B_1|}{N-2}
    \int_0^1\tau^{\frac{2}{N-2}-q}\big(1-\tau\big)^q\;d\tau=\mathrm{B}\left(\frac{2}{N-2}-q+1,q+1\right)\\
    &=\frac{c_N^q|\partial B_1|}{N-2}\frac{\Gamma(q+1)\Gamma\left(\frac{2}{N-2}-q+1\right)}{\Gamma
   \left(\frac{2(N-1)}{N-2}\right)}
   =c_N^q\frac{\pi^{N/2}\Gamma(q+1)\Gamma\left(\frac{2}{N-2}-q+1\right)}{\Gamma
   \left(\frac{N}{2}+1\right) \Gamma \left(\frac{N}{N-2}\right)},
    \end{align*}
    where we used the definition of the Beta function $\mathrm{B}(\cdot, \cdot)$ and its relation with the Gamma function $\Gamma(\cdot)$, the fact that $z\Gamma(z)=\Gamma(z+1)$,  $\frac{2(N-1)}{N-2}=\frac{N}{N-2}+1$ and the characterization of $c_N$ given in \eqref{F:def}. On the other hand, by \eqref{FunSol} and Lemma~\ref{alt:lem},
    \[
    |\Delta G_{B_1}(\cdot,0)|_T 
    =
    |\delta_0|(\Omega)
    =
    \sup\bigg\{-\varphi(0):\varphi\in C^2_c(\Omega),\ |\varphi|_\infty\leq 1\bigg\}=1. \qedhere
    \]
\end{proof}

\begin{lemma}\label{talenti}
Let $\Omega\subseteq\R^N$ be open, bounded, and such that $|\Omega|=|B_1|$. 
For any fixed $x\in\Omega$,
if $G_\Omega(x,\cdot)^\sharp$ denotes the radial symmetric decreasing rearrangement of $G_\Omega(x,\cdot)$, then it holds
\begin{align}\label{hrfwn}
G_\Omega(x,\cdot)^\sharp(y)\leq G_{B_1}(0,y)
\qquad\text{for a.e. }y\in B_1\setminus\{0\}.
\end{align}
\end{lemma}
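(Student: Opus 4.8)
The plan is to use Talenti's symmetrization/comparison principle for the Dirichlet problem. Fix $x \in \Omega$ and consider $w := G_\Omega(x,\cdot)$, which is the solution of $-\Delta w = \delta_x$ in $\Omega$ with $w = 0$ on $\partial\Omega$. The function $w$ is nonnegative, and for any level $t > 0$ the superlevel set $\{w > t\}$ is a bounded open subset of $\Omega$ on which $w$ is harmonic away from $x$; indeed $w$ solves $-\Delta w = \mu$ with $\mu = \delta_x$ a positive measure of total mass $1$. The model problem on the ball $B_1$ (which has the same measure as $\Omega$ by hypothesis) is $-\Delta z = \delta_0$, whose solution is precisely $z = G_{B_1}(0,\cdot)$, the radial function in \eqref{F:def}.

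First I would recall (or invoke) the version of Talenti's comparison theorem valid for measure-valued data: if $-\Delta w = \mu \geq 0$ in $\Omega$ with $w=0$ on $\partial\Omega$, and $-\Delta z = \mu^\sharp$ in $B_1$ with $z = 0$ on $\partial B_1$, where $\mu^\sharp$ is the symmetric decreasing rearrangement of $\mu$ (here a Dirac mass at the origin of the same total mass), then the symmetric decreasing rearrangement $w^\sharp$ satisfies $w^\sharp \leq z$ a.e. in $B_1$. Since $\mu = \delta_x$ has total mass $1$, its rearrangement is $\delta_0$, so $z = G_{B_1}(0,\cdot)$ exactly, and the conclusion $G_\Omega(x,\cdot)^\sharp(y) \leq G_{B_1}(0,y)$ for a.e. $y \in B_1\setminus\{0\}$ follows.

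The technical heart of the argument — which I would either cite from the literature (Talenti, and its extensions to $L^1$/measure data, e.g. by Alvino–Lions–Trombetti or by Betta–Mercaldo–Murat–Porzio) or sketch — is the rearrangement inequality itself: writing $\mu(t) = |\{w > t\}|$ for the distribution function, one integrates $-\Delta w = \mu$ over superlevel sets, uses the coarea formula, the isoperimetric inequality, and the Cauchy–Schwarz inequality to obtain a differential inequality for $w^\sharp$; the same chain of inequalities is an equality for the radial solution $z$ on the ball, so comparing ODEs gives $w^\sharp \leq z$. The only subtlety here relative to the classical $L^2$ statement is the singularity at $x$ (resp. at $0$): one works on $\Omega \setminus \overline{B_\rho(x)}$ and lets $\rho \to 0$, noting that the point mass contributes $1$ to $\int_{\{w>t\}}(-\Delta w) = \mu(\{w>t\})$ for every $t>0$ since $x$ lies in every superlevel set.

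The main obstacle, then, is purely a matter of which form of Talenti's principle to invoke and how much of its proof to reproduce: the classical statement assumes $L^2$ or at least $L^p$ data with $p > N/2$, which fails for $\delta_x$, so I would need the refinement handling measures. Once that is in place the proof is short: identify the rearranged data as $\delta_0$, identify the comparison solution on the ball as the explicit radial Green function from \eqref{F:def}, and conclude \eqref{hrfwn}. I would present this as a one-paragraph proof citing the appropriate version of Talenti's theorem, after remarking that $-\Delta G_\Omega(x,\cdot) = \delta_x$ is a positive measure of unit mass whose decreasing rearrangement is $\delta_0 = -\Delta G_{B_1}(0,\cdot)$.
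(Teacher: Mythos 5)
Your proposal is correct but takes a genuinely different route from the paper. You invoke a measure-data version of Talenti's comparison principle directly, applying it to $-\Delta w = \delta_x$ and noting that $\delta_x^\sharp = \delta_0$; this reduces the lemma to a one-line citation, modulo locating a reference that states Talenti's theorem for right-hand sides that are positive Radon measures rather than $L^p$ functions (you correctly flag Alvino--Lions--Trombetti or Betta--Mercaldo--Murat--Porzio as candidates). The paper instead avoids the measure-valued refinement entirely: it mollifies $\delta_0$ to obtain a sequence $\psi_j$ of smooth, radially decreasing approximations of unit mass, solves the Dirichlet problems with data $\psi_j(\cdot - x)$ on $\Omega$ and $\psi_j$ on $B_1$, applies the \emph{classical} Talenti theorem \cite{zbMATH03531830} (for which $L^\infty$ data is more than enough) to get $v_j^\sharp \leq u_j$, and then passes to the limit $j\to\infty$. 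The passage to the limit on the rearranged side is the only nontrivial step and is handled via the Almgren--Lieb continuity-in-measure result for symmetric decreasing rearrangement \cite{zbMATH04132888}. The trade-off is clear: your argument is shorter and conceptually cleaner once the measure-data Talenti theorem is in hand, while the paper's argument is longer but needs only the original, universally cited 1976 Talenti result and a standard continuity fact, so it is easier to verify and more self-contained. Your sketch of re-deriving the measure-data estimate (working on $\Omega \setminus \overline{B_\rho(x)}$, letting $\rho \to 0$, tracking the unit contribution of the point mass to every superlevel set) is plausible but would need to be carried out carefully; as presented it is a program, not a proof, and the paper's approximation scheme sidesteps exactly those delicacies.
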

\begin{proof}
Consider $\psi\in C^\infty_c(B_1)$ to be radially decreasing, and such that $\psi\geq0,|\psi|_1=1$. Construct the sequence
\begin{align*}
\psi_j(y)=j^{N}\psi(j y)
\qquad\text{for any }y\in\R^N,\ j\in\N,
\end{align*}
which is weakly converging to the Dirac measure $\delta_0$. Consider the sequences defined by, for any $j\in\N$,
\begin{align*}
-\Delta u_j=\psi_j \quad\text{in }B_1,
\qquad u_j=0\quad\text{on }\partial B_1,
\end{align*}
and , for $x\in \Omega$ fixed,
\begin{align*}
-\Delta v_j=\psi_j(\cdot-x) \quad\text{in }\Omega,
\qquad v_j=0\quad\text{on }\partial\Omega.
\end{align*}
As $\psi_j(\cdot-x)^\sharp=\psi_j$, by the Talenti's comparison principle \cite{zbMATH03531830}, we deduce
\begin{align}\label{0952385u}
v_j^\sharp\leq u_j
\qquad\text{in }B_1,\text{ for any }j\in\N.
\end{align}
As
\begin{align*}
v_j\longrightarrow G_\Omega(x,\cdot),
\quad u_j\rightarrow G_{B_1}(0,\cdot),
\qquad\text{pointwisely as }j\to+\infty,
\end{align*}
by continuity of the radial symmetric decreasing rearrangement in measure \cite[Theorem 2.4]{zbMATH04132888} we can push inequality \eqref{0952385u} to the limit to get \eqref{hrfwn}.
\end{proof}

\begin{prop}\label{uniquexm}
The function $B_1\ni x\mapsto|G_{B_1}(x,\cdot)|_q^q$ has a maximum point at $x_M=0$.
\end{prop}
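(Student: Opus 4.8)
The plan is to deduce the statement directly from the Talenti-type comparison in Lemma~\ref{talenti}, applied with $\Omega=B_1$. The first step is to observe that $G_{B_1}(0,\cdot)$ is already radially symmetric and strictly decreasing in $|y|$: indeed, by \eqref{F:def} we have $G_{B_1}(0,y)=c_N(|y|^{2-N}-1)$, so $G_{B_1}(0,\cdot)$ coincides (up to the single point $y=0$, which is irrelevant for the integrals) with its own radially symmetric decreasing rearrangement. Hence, taking $\Omega=B_1$ in Lemma~\ref{talenti}, for every fixed $x\in B_1$ we obtain
\[
G_{B_1}(x,\cdot)^\sharp(y)\leq G_{B_1}(0,y)\qquad\text{for a.e. }y\in B_1\setminus\{0\}.
\]

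The second step is to use that the radially symmetric decreasing rearrangement is equimeasurable, hence norm-preserving on every $L^q$: $|G_{B_1}(x,\cdot)|_q=|G_{B_1}(x,\cdot)^\sharp|_q$. Combining this with the pointwise inequality above and the monotonicity of the integral gives
\[
|G_{B_1}(x,\cdot)|_q^q=\int_{B_1}\big(G_{B_1}(x,\cdot)^\sharp(y)\big)^q\,dy\leq\int_{B_1}G_{B_1}(0,y)^q\,dy=|G_{B_1}(0,\cdot)|_q^q,
\]
where all integrals are finite because $q\in[1,1^{**})$ and by the explicit form of $G_{B_1}(0,\cdot)$, exactly as already exploited in the computation of Lemma~\ref{Fnorm:lem}. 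Since $x\in B_1$ was arbitrary, this shows that $x\mapsto|G_{B_1}(x,\cdot)|_q^q$ attains its maximum at $x_M=0$ (and, incidentally, the same inequality holds trivially for $x\in\partial B_1$, where the left-hand side vanishes).

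The argument is essentially immediate once Lemma~\ref{talenti} is available, so there is no substantial obstacle; the only point that deserves (minor) care is verifying that $G_{B_1}(0,\cdot)$ genuinely equals its own radial decreasing rearrangement, so that the right-hand side of the comparison is precisely the quantity we want to bound against, together with the integrability of $G_{B_1}(0,\cdot)^q$ near the origin, which is guaranteed by the restriction $q<1^{**}$.
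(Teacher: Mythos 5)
Your proof is correct and follows exactly the same route as the paper's: apply Lemma~\ref{talenti} with $\Omega=B_1$, use that rearrangement preserves $L^q$ norms, and conclude by monotonicity of the integral. The paper's version is just more compressed; your extra observation that $G_{B_1}(0,\cdot)$ equals its own rearrangement, implicitly used in the paper, is the same content spelled out.
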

\begin{proof}
Let $x\in B_1.$ By the properties of the symmetric decreasing rearrangement and by Lemma~\ref{talenti},
\begin{align*}
|G_{B_1}(x,\cdot)|_q=|G_{B_1}(x,\cdot)^\sharp|_q\leq|G_{B_1}(0,\cdot)|_q,
\end{align*}
as claimed.
\end{proof}

We are ready to show Theorem~\ref{eigenball:thm}.
\begin{proof}[Proof of Theorem~\ref{eigenball:thm}]
This follows from Theorem~\ref{char:thm}, Lemma~\ref{Fnorm:lem}, and Proposition~\ref{uniquexm}. 
\end{proof}

\begin{remark}\label{L11:rmk}
Note that 
\[
\lim_{q\to 1^+}\Lambda_{1,q}(B_1)=2N,
\]
which is, by \cite[Theorem 1.2]{PRT12}, the first eigenvalue of the 1-bilaplacian equation on the unitary ball. Indeed, 
\begin{align*}
\lim_{q\to 1^+}\Lambda_{1,q}(B_1)=
\frac{4\pi^{N/2}}{\Gamma(\frac{N}2-1)}
    \frac{\Gamma(\frac{N}{N-2})\Gamma(\frac{N}2+1)}{\pi^{N/2}\Gamma(\frac{N}{N-2}-1)}
    =4\bigg(\frac{N}{N-2}-1\bigg)\frac{N}2\bigg(\frac{N}2-1\bigg)=2N,
\end{align*}
where we have used several times the recurrence identity $\Gamma(t+1)=t\Gamma(t)$ for $t>0$.
\end{remark}

\begin{proof}[Proof of Proposition~\ref{faber-krahn}]
Exploiting the characterization of $\Lambda_{1,q}(\Omega)$ given in Theorem~\ref{char:thm}, we have, by Lemma~\ref{talenti} and Proposition~\ref{uniquexm},
\begin{align*}
\Lambda_{1,q}(\Omega)=\frac1{|G_\Omega(x_M,\cdot)|_q}
=\frac1{|G_\Omega(x_M,\cdot)^\sharp|_q}
\geq\frac1{|G_{B_1}(0,\cdot)|_q}=\Lambda_{1,q}(B_1).
\end{align*}
\end{proof}

\subsection{Limiting profiles in a ball}\label{sec:limit:ball}

Let $\Omega=B_1$ be the unitary ball of $\R^N$ ($N\geq 3$) centered at the origin, $\a\in(0,\frac{2}{N-2})$, $\b>0$, and let $(U_\b, V_\b)$ be a least-energy solution of 
\begin{align}\label{ball:sys}
-\Delta U_\b = |V_\b|^{\b-1}V_\b\quad \text{ in }B_1,\qquad
-\Delta V_\b = |U_\b|^{\a-1}U_\b\quad \text{ in }B_1,\qquad 
U_\b =V_\b = 0\quad \text{ on }\partial B_1.
\end{align}

In this section, we give an explicit characterization of the limiting profiles $U_\infty$ and $V_\infty$, given by  Theorem~\ref{Uq:conv:thm:intro}, in $B_1$.

\begin{prop}\label{kappa:prop}
There is $\kappa_{N,\a}>0$ such that, as $\beta\to \infty$,
\[
U_\b\to U_\infty=\kappa_{N,\a} G_{B_1}(\cdot ,0)
\quad \text{ in }W^{1,r}_0(B_1)
\text{ for all }r\in [1,1^*),\qquad |\Delta U_\b|_1\to \kappa_{N,\a}.
\]
Moreover, $V_\infty\in W^{2,1+\frac1\a}(B_1)\cap W_0^{1,1+\frac1\a}(B_1)$ is a strong solution of $-\Delta V_\infty = U_\infty^\a$ in $B_1$.
\end{prop}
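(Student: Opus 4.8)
The plan is to identify $U_\infty$ explicitly by combining Theorem~\ref{Uq:conv:thm} with the results of Section~\ref{ball:sec} on the minimizers of $\Lambda_{1,\a+1}(B_1)$, and then to transfer the information to $V_\infty$ via Corollary~\ref{Vq:conv:coro}. By Theorem~\ref{Uq:conv:thm}, up to a subsequence, $U_\b\to U_\infty=\Lambda_{1,\a+1}^{1/\a}u_1$ in $W^{1,r}_0(B_1)$ for every $r\in[1,1^*)$, with $|\Delta U_\b|_1\to|\Delta U_\infty|_T$, where $u_1\in BL_0(B_1)$ minimizes $\Lambda_{1,\a+1}$ and $|u_1|_{\a+1}=1$. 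Since $\a<\frac{2}{N-2}$ gives $\a+1\in(1,1^{**})$, all the results of Section~\ref{ball:sec} apply with $q=\a+1$.

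The heart of the argument is to show that $u_1$ is uniquely determined, namely that the function $h(x)=|G_{B_1}(x,\cdot)|_{\a+1}^{\a+1}$ has $x_M=0$ as its \emph{only} maximum point in $\overline{B_1}$; this sharpens Proposition~\ref{uniquexm}. By Lemma~\ref{talenti}, $G_{B_1}(x,\cdot)^\sharp\le G_{B_1}(0,\cdot)$ a.e.\ in $B_1$ for every $x\in B_1$. If $x\ne0$, this inequality must be strict on a set of positive measure: otherwise $G_{B_1}(x,\cdot)^\sharp=G_{B_1}(0,\cdot)$ a.e., and since the symmetric decreasing rearrangement preserves the $L^1$ norm this would force $\Psi_{B_1}(x)=\int_{B_1}G_{B_1}(x,y)\,dy=\int_{B_1}G_{B_1}(0,y)\,dy=\Psi_{B_1}(0)$, contradicting the fact that the torsion function $\Psi_{B_1}(x)=\frac{1-|x|^2}{2N}$ is strictly radially decreasing. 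As $t\mapsto t^{\a+1}$ is strictly increasing on $[0,\infty)$, this yields $h(x)=|G_{B_1}(x,\cdot)^\sharp|_{\a+1}^{\a+1}<|G_{B_1}(0,\cdot)|_{\a+1}^{\a+1}=h(0)$. With $x_M=0$ the unique maximizer, Proposition~\ref{prop:uniqueness} (together with Proposition~\ref{pos:prop} for the sign) forces $u_1=G_{B_1}(\cdot,0)/|G_{B_1}(\cdot,0)|_{\a+1}$.

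It then remains to assemble the conclusion. Using $\Lambda_{1,\a+1}(B_1)=|G_{B_1}(\cdot,0)|_{\a+1}^{-1}$ (Proposition~\ref{prop1.7-part1}, see also Theorem~\ref{eigenball:thm}), one gets $U_\infty=\Lambda_{1,\a+1}^{1/\a}u_1=\kappa_{N,\a}\,G_{B_1}(\cdot,0)$ with $\kappa_{N,\a}:=\Lambda_{1,\a+1}(B_1)^{(1+\a)/\a}>0$, and by Lemma~\ref{Fnorm:lem} we have $|\Delta U_\infty|_T=\kappa_{N,\a}|\Delta G_{B_1}(\cdot,0)|_T=\kappa_{N,\a}$, so $|\Delta U_\b|_1\to\kappa_{N,\a}$. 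Since this limit does not depend on the chosen subsequence, a routine Urysohn argument promotes the convergence to the full family $\b\to\infty$. Finally, by Corollary~\ref{Vq:conv:coro}, $V_\b\weakto V_\infty$ weakly in $W^{2,1+1/\a}(B_1)\cap W^{1,1+1/\a}_0(B_1)$, and passing to the limit in the Green representation $V_\b(x)=\int_{B_1}G_{B_1}(x,y)\,U_\b(y)^\a\,dy$ (as in the proof of Theorem~\ref{Uq:conv:thm:intro}) identifies $V_\infty(x)=\int_{B_1}G_{B_1}(x,y)\,U_\infty(y)^\a\,dy$; since $U_\infty^\a\in L^{1+1/\a}(B_1)$ (because $(\a+1)(N-2)<N$), elliptic $L^p$-regularity gives $-\Delta V_\infty=U_\infty^\a$ in $B_1$, $V_\infty=0$ on $\partial B_1$, in the strong sense.

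The only non-routine step is the strict Talenti comparison in the second paragraph — equivalently, the uniqueness of $x_M=0$; everything else is an assembly of results already established. I expect this to be the main obstacle, and it is resolved here by exploiting the radial symmetry of $B_1$ together with the strict monotonicity of its torsion function.
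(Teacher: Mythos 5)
Your proof is correct, but it takes a genuinely different route from the paper's. The paper argues through the PDE structure and concentration: after establishing (via a symmetrization result) that $U_\b$, $V_\b$, $U_\infty$, $V_\infty$ are radial and radially decreasing, it shows $V_\infty\le 1$ in $B_1$ (otherwise $|\Delta U_\b|_T = \int_{B_1}|\Delta U_\b|\,\varphi \geq \int_{B_1}V_\b^\b\,\varphi\to\infty$ for a suitable test function, contradicting $U_\infty\in BL_0$), then $V_\infty<1$ on $B_1\setminus\{0\}$ (otherwise $-\Delta V_\infty=U_\infty^\a\equiv 0$ on a small ball, forcing $U_\infty\equiv 0$ by monotonicity), and concludes that $V_\b^\b\to 0$ locally uniformly away from the origin, so $-\Delta U_\infty$ is a measure supported at $\{0\}$, i.e.\ a multiple of $\delta_0$. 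The constant $\kappa_{N,\a}$ is only identified afterwards in Lemma~\ref{kappa1:lem}. You instead bypass the limit PDE entirely and work at the level of the eigenvalue problem: you sharpen Proposition~\ref{uniquexm} to a \emph{strict} inequality $h(x)<h(0)$ for $x\ne 0$ (via Talenti plus the $L^1$-norm/torsion-function argument, which correctly rules out a.e.\ equality of the rearrangements), and then invoke Proposition~\ref{prop:uniqueness} to pin down the minimizer $u_1$, and hence $U_\infty$, outright. Your route is more modular and proves along the way a fact the paper never explicitly establishes, namely the uniqueness of the maximizer $x_M=0$ in the ball; the paper's route, on the other hand, exhibits the concentration mechanism $V_\b^\b\weakto\kappa\,\delta_0$ directly and does not need the strict Talenti refinement or the symmetry of $G_{B_1}$, only the symmetry and monotonicity of the solutions themselves. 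Both arrive at $\kappa_{N,\a}=\Lambda_{1,\a+1}(B_1)^{(\a+1)/\a}$ and at full (not merely subsequential) convergence by the same uniqueness-of-limit observation.
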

\begin{proof}
Let $(U_\b, V_\b)$ be a positive least-energy solution of \eqref{ball:sys}. By \eqref{class:sol}, $(U_\b, V_\b)$ is also a classical solution.  A standard symmetrization argument yields that $U_\b$ and $V_\b$ are  radially symmetric, and  decreasing in the radial variable (see for example \cite[Theorem 4.7]{BMT14}). By Theorem~\ref{Uq:conv:thm:intro}, we have that $U_\b\to U_\infty$ in $W^{1,r}(B_1)$ and $V_\b\to V_\infty$ in $C^{0,\sigma}(\overline \Omega)$ as $\b\to\infty$. In particular, $U_\infty$ and $V_\infty$ are also nonnegative, radially symmetric, and decreasing in the radial variable.

Now we claim that $V_\infty\leq 1$ in $B_1$.  If there is $z_0\in B_1$ such that $\eps=V_\infty(z_0)-1>0$, then (by monotonicity and uniform convergence), we have that $V_\beta(x)>1+\frac{\eps}{2}$ for every $x\in B_1$ with $|x|\leq|z_0|$ and for $\b$ sufficiently large. But then $-\Delta U_\b(x) = V_\b(x)^\b\to\infty$ as $\b\to\infty$ for every $x\in B_1$ with $|x|\leq|z_0|$. In particular, if $\varphi\in C^\infty_c(B_1)$ is a nonnegative function such that $\varphi=1$ in $B_{|z_0|}$ and $|\varphi|_\infty\leq 1$, then
    \begin{align*}
    |\Delta U_\infty|_T=\lim_{\b\to\infty}|\Delta U_\b|_T\geq \lim_{\b\to\infty}\int_{B_1}U_\b\,(-\Delta\varphi)
    =\lim_{\b\to\infty}\int_{B_1}(-\Delta U_\b)\,\varphi
    =\infty\qquad \text{ as }\b\to\infty,
    \end{align*}
    which would contradict the fact that $U_\infty\in BL_0(\Omega)$.
    
Next we show that $V_\infty<1$ in $B_1\setminus\{0\}$. Indeed, assume by contradiction that there is $r_0\in(0,1)$ such that $V_\infty=1$ in $B_{r_0}$.  Then, $0=-\Delta V_\infty = U_\infty^\a$ in $B_{r_0}$. Since $U_\infty$ is a nonnegative monotone function, this implies that $U_\infty\equiv 0$ in $B_1$, but this contradicts the fact that $U_\infty$ is nontrivial (see Lemma~\ref{alpha1:lem}).
  
    Since $V_\infty<1$ in $B_1\setminus\{0\}$, this implies that $V_\b^\b\to 0$ locally uniformly in $B_1\setminus\{0\}$ as $\b\to\infty$. Therefore,
    \begin{align*}
    -\Delta U_\b = V_\b^\b\to 0\qquad\text{ locally uniformly in }B_1\setminus\{0\}
    \quad\text{as }\b\to\infty.
    \end{align*}
    As a consequence, for every $\varphi\in C^\infty_c(B_1\backslash \{0\})$,
    \begin{align*}
        \int_\Omega U_\infty\,\Delta\varphi
        =\lim_{\b\to\infty}\int_\Omega U_\b\,\Delta\varphi
        =-\lim_{\b\to\infty}\int_\Omega V_\b^\b\,\varphi =0,
    \end{align*}
namely, the nontrivial Radon measure $-\Delta U_\infty$ has support only on $\{0\}$. Then, for every $\varphi\in C^\infty_c(B_1)$, it holds
\[
\int_{B_1} \varphi \, d(-\Delta U_\infty)=\varphi(0)(-\Delta U_\infty)(B_1),
\]
which implies that $-\Delta U_\infty$ is a multiple of $\delta_0$ and therefore $U_\infty(x)=\kappa_{N,\a}G_{B_1}(\cdot,0)$ in $B_1$ for some $\kappa_{N,\a}>0$, where $G_{B_1}(\cdot,0)$ is given by \eqref{F:def}.
\end{proof}

\begin{lemma}\label{kappa1:lem}
    If $\kappa_{N,\a}$ is as in Proposition~\ref{kappa:prop}, then 
    \begin{align*}
    \kappa_{N,\a}
    = \Lambda_{1,\a+1}^\frac{\a+1}{\a}
    =c_N^{-1-\frac{1}{\a}}\left(
    \frac{\Gamma(\frac{N}{N-2})\Gamma(\frac{N}2+1)}{\pi^{N/2}\Gamma(\frac{2}{N-2}-\a)\,\Gamma(\a+2)}\right)^\frac{1}{\a}.
    \end{align*}
\end{lemma}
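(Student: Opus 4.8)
The plan is to combine two independent identifications of the constant $\kappa_{N,\a}$: one coming from the general asymptotic theory (Theorem~\ref{Uq:conv:thm} together with the explicit value of $\Lambda_{1,\a+1}(B_1)$ from Theorem~\ref{eigenball:thm}), and one coming from the explicit formula $G_{B_1}(\cdot,0)=c_N(|x|^{2-N}-1)$ together with the $L^{q}$-norm computation in Lemma~\ref{Fnorm:lem}. First I would recall from Theorem~\ref{Uq:conv:thm} that $U_\infty=\Lambda_{1,\a+1}^{1/\a}u_1$, where $u_1$ is the $L^{\a+1}$-normalized minimizer of $\Lambda_{1,\a+1}$. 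On the other hand, Proposition~\ref{kappa:prop} gives $U_\infty=\kappa_{N,\a}G_{B_1}(\cdot,0)$, and by Theorem~\ref{eigenball:thm} (via Theorem~\ref{char:thm} and Proposition~\ref{uniquexm}, $x_M=0$) the normalized minimizer is exactly $u_1=G_{B_1}(\cdot,0)/|G_{B_1}(\cdot,0)|_{\a+1}$. Equating the two expressions for $U_\infty$ yields
\[
\kappa_{N,\a}=\frac{\Lambda_{1,\a+1}^{1/\a}}{|G_{B_1}(\cdot,0)|_{\a+1}}.
\]

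Next I would simplify this quotient. Since $G_{B_1}(\cdot,0)$ achieves $\Lambda_{1,\a+1}$ with $|\Delta G_{B_1}(\cdot,0)|_T=1$ (Lemma~\ref{Fnorm:lem}), we have directly $\Lambda_{1,\a+1}=1/|G_{B_1}(\cdot,0)|_{\a+1}$, whence
\[
\kappa_{N,\a}=\Lambda_{1,\a+1}^{1/\a}\cdot\Lambda_{1,\a+1}=\Lambda_{1,\a+1}^{\frac{\a+1}{\a}},
\]
which is the first claimed equality. For the second equality I would substitute the explicit value $|G_{B_1}(\cdot,0)|_{\a+1}=c_N\big(\frac{\pi^{N/2}}{\Gamma(\frac N2+1)}\frac{\Gamma(\frac{N}{N-2}-(\a+1))\Gamma(\a+2)}{\Gamma(\frac{N}{N-2})}\big)^{\frac{1}{\a+1}}$ from Lemma~\ref{Fnorm:lem} with $q=\a+1$, so that $\Lambda_{1,\a+1}=|G_{B_1}(\cdot,0)|_{\a+1}^{-1}$ and hence $\kappa_{N,\a}=|G_{B_1}(\cdot,0)|_{\a+1}^{-\frac{\a+1}{\a}}$. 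Raising to the power $-\frac{\a+1}{\a}$ turns the outer exponent $\frac{1}{\a+1}$ into $-\frac1\a$ and the prefactor $c_N$ into $c_N^{-\frac{\a+1}{\a}}=c_N^{-1-\frac1\a}$; finally, using $\frac{N}{N-2}-(\a+1)=\frac{2}{N-2}-\a$ one recognizes exactly the stated closed form
\[
\kappa_{N,\a}=c_N^{-1-\frac1\a}\left(\frac{\Gamma(\frac{N}{N-2})\Gamma(\frac N2+1)}{\pi^{N/2}\Gamma(\frac{2}{N-2}-\a)\Gamma(\a+2)}\right)^{\frac1\a}.
\]

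I expect no conceptual obstacle here; the lemma is essentially a bookkeeping consequence of results already established. The only points requiring a little care are: (i) checking the exponent arithmetic (that $\Lambda_{1,\a+1}^{1/\a}/|G_{B_1}(\cdot,0)|_{\a+1}$ collapses to $\Lambda_{1,\a+1}^{(\a+1)/\a}$, which hinges on the normalization $|\Delta G_{B_1}(\cdot,0)|_T=1$ from Lemma~\ref{Fnorm:lem} making $\Lambda_{1,\a+1}$ and $|G_{B_1}(\cdot,0)|_{\a+1}$ reciprocal); (ii) confirming that the admissibility condition $\a<\frac{2}{N-2}$ guarantees $q=\a+1<\frac{N}{N-2}=1^{**}$, so that Lemma~\ref{Fnorm:lem} and Theorem~\ref{eigenball:thm} apply; and (iii) the Gamma-function identity $\Gamma\!\big(\frac{2}{N-2}-\a+1\big)=\big(\frac{2}{N-2}-\a\big)\Gamma\!\big(\frac{2}{N-2}-\a\big)$ combined with $\frac{N}{N-2}-q=\frac{2}{N-2}-\a$, which rewrites the argument appearing in Lemma~\ref{Fnorm:lem} into the form displayed in the statement. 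Once these are in place the proof is a one-line substitution.
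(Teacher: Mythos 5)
Your proposal is correct and follows essentially the same path as the paper: equate the two expressions for $U_\infty$ coming from Theorem~\ref{Uq:conv:thm} and Proposition~\ref{kappa:prop}, extract $\kappa_{N,\a}$, and substitute the explicit value from Theorem~\ref{eigenball:thm}. The paper obtains $\kappa_{N,\a}=\Lambda_{1,\a+1}^{(\a+1)/\a}$ by applying $|\Delta\cdot|_T$ to both sides of $\Lambda_{1,\a+1}^{1/\a}u_1=\kappa_{N,\a}G_{B_1}(\cdot,0)$, using $|\Delta u_1|_T=\Lambda_{1,\a+1}$ and $|\Delta G_{B_1}(\cdot,0)|_T=1$; you instead identify $u_1=G_{B_1}(\cdot,0)/|G_{B_1}(\cdot,0)|_{\a+1}$ and use $\Lambda_{1,\a+1}=1/|G_{B_1}(\cdot,0)|_{\a+1}$. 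These are the same computation seen through dual normalizations. One small imprecision: you invoke Theorem~\ref{char:thm} and Proposition~\ref{uniquexm} to assert $u_1=G_{B_1}(\cdot,0)/|G_{B_1}(\cdot,0)|_{\a+1}$, but Proposition~\ref{uniquexm} only shows that $0$ is \emph{a} maximum point of $h$, not the unique one, so the uniqueness clause of Theorem~\ref{char:thm} is not directly available. This is immaterial, though: the identity already follows from $\Lambda_{1,\a+1}^{1/\a}u_1=\kappa_{N,\a}G_{B_1}(\cdot,0)$ together with $|u_1|_{\a+1}=1$, so your conclusion stands without appealing to uniqueness.
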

\begin{proof}
    By Theorem~\ref{Uq:conv:thm} and Proposition~\ref{kappa:prop}, we have $\Lambda_{1,\a+1}^\frac{1}{\a}u_1=\kappa_{N,\a}G_{B_1}(\cdot, 0)$, so that  
    \[\Lambda_{1,\a+1}^{\frac{\a+1}{\a}}=\Lambda_{1,\a+1}^\frac{1}{\a}|\Delta u_1|_T=\kappa_{N,p} |\Delta G_{B_1}(\cdot,0)|_T=\kappa_{N,p},\]
and the claim now follows from Theorem~\ref{eigenball:thm}.
\end{proof}

Let $G_{B_1}$ denote the Green's function for the Dirichlet Laplacian in the ball $B_1$ and recall that $\kappa_{N,\a}$ is given in Lemma~\ref{kappa1:lem},  and $(U_\b,V_\b)$ denotes a least-energy solution of \eqref{ball:sys}.  We are ready to show Theorem~\ref{ball:thm}.

\begin{proof}[Proof of Theorem~\ref{ball:thm}]
The proof follows from Proposition~\ref{kappa:prop}. Note that the constant $A_{N,\a}$ given in \eqref{k:def} is $A_{N,\a}=\kappa_{N,\a}c_N$, see Lemma~\ref{kappa1:lem}. 
\end{proof}

\begin{remark}
Consider now a general bounded domain $\Omega\subseteq\R^N$ of class $C^{2,\gamma}$ and a least-energy positive solution $(U_\b,V_\b)$ of 
\begin{align*}
-\Delta U_\b = |V_\b|^{\b-1}V_\b\quad \text{ in }\Omega,\qquad
-\Delta V_\b = |U_\b|^{\a-1}U_\b\quad \text{ in }\Omega,\qquad 
U_\b =V_\b = 0\quad \text{ on }\partial \Omega.
\end{align*}
We conjecture that $|V_\b|^{\b-1}V_\b \to C_{N,\a}\delta_{x_M}$ in the sense of distributions as $\b\to\infty$, with $x_M$ as in Corollary~\ref{explicit:cor} and 
\[
C_{N,\a}=|G_\Omega(\cdot,x_M)|_{\a+1}^{-1-\frac1\a}.
\]
Then, $-\Delta U_\infty=C_{N,\a}\delta_{x_M}$ and $U_\infty(x)=C_{N,\a}G_\Omega(x,x_M)$ for $x\in\Omega$, that is, $U_\infty$ is a multiple of the Green's function for the Dirichlet Laplacian in $\Omega$ centered at $x_M$. Then,
\[
V_\infty(x)=C_{N,\a}^\a\int_\Omega G_\Omega(x,y)\,G_\Omega(y,x_M)^\a\;dy
\qquad\text{for }x\in\Omega.
\]
This is a conjecture, because Corollary~\ref{explicit:cor} does not claim that all the minimizers of $\Lambda_{1,p}(\Omega)$ have the same shape (see also Proposition~\ref{prop:uniqueness} for a uniqueness statement in this regard).
\end{remark}

\appendix

\section{Useful results}\label{appendix}
The purpose of this appendix is to give a self-contained description of the space $BL_0(\Omega)$ given by \eqref{eq:BL0}, and some of its properties.  In some cases we write new proofs of known results (Lemma~\ref{lemma:density}) or present new results (Lemma~\ref{lemma:densityC^2}); other short proofs are included for completeness. Remarks~\ref{sense_of_Stampacchia} and~\ref{rem:BP_comment}, on the other hand, are intended to comment some references, while Remark~\ref{nico:rmk} presents a way of introducing the Green function on a Lipchitz domain via regularity results.

Unless otherwise stated, we take $\Omega$ to be a bounded Lipschitz domain, and we recall from \eqref{eq:1*1**} the definitions of $1^*$ and $1^{**}$.

\subsection{General results}

Recall the definitions of $W^{2,p}_\Delta(\Omega)$, for $p\geq 1$, and $BL_0(\Omega)$ given in \eqref{def:Xp} and \eqref{def:BL} respectively, and the total variation $|\Delta \cdot |_T$ of the Laplacian of an $L^1_{loc}(\Omega)$-function, given in \eqref{def:Tnorm}. We use the symbol $\sim$ for the equivalence of two norms. Recall also that the total variation of a Radon measure $\mu$ on $\Omega$ is defined as
\[
|\mu|(\Omega)=\sup \left\{\int_{\Omega} \varphi\, d\mu : \varphi \in C_{c}(\Omega),|\varphi|_{\infty} \leq 1\right\}.
\]

\begin{lemma}[Alternative characterization of $BL_0(\Omega)$]\label{alt:lem}
We have
\[
    BL_0(\Omega)=\left\{u \in W_{0}^{1,1}(\Omega): \Delta u \text{ is a Radon measure with } |\Delta u|(\Omega)<\infty\right\}.
\]  
In particular, for $u\in BL_0(\Omega)$, we have
\begin{align}\label{measures_equiv}
|\Delta u|_T&=|\Delta u|(\Omega)=\sup \left\{\int_{\Omega}  \varphi\,  d\Delta u  : \varphi \in C_c^\infty(\Omega),|\varphi|_{\infty} \leq 1\right\}.
\end{align}
\end{lemma}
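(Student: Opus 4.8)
The plan is to prove the claimed identity
\[
BL_0(\Omega)=\left\{u \in W_{0}^{1,1}(\Omega): \Delta u \text{ is a Radon measure with } |\Delta u|(\Omega)<\infty\right\}
\]
by a double inclusion, after establishing the Riesz-representation step that a distribution whose action against test functions is dominated by the sup-norm is in fact a signed Radon measure. First I would note the trivial implication: if $u\in W_0^{1,1}(\Omega)$ and $\Delta u$ (as a distribution) is a finite Radon measure $\mu$, then for $\varphi\in C_c^\infty(\Omega)$ with $|\varphi|_\infty\le 1$ we have $\int_\Omega u\,\Delta\varphi = \langle \Delta u,\varphi\rangle = \int_\Omega \varphi\,d\mu \le |\mu|(\Omega)$, so $|\Delta u|_T\le|\mu|(\Omega)<\infty$ and hence $u\in BL_0(\Omega)$; this also gives $\le$ in the middle term of \eqref{measures_equiv}.

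For the converse, suppose $u\in BL_0(\Omega)$, i.e. $u\in W_0^{1,1}(\Omega)$ and $|\Delta u|_T<\infty$. The key step is to view the map $\varphi\mapsto \int_\Omega u\,\Delta\varphi$, a priori defined and linear on $C_c^\infty(\Omega)$, and observe that by the very definition of $|\Delta u|_T$ it satisfies $|\int_\Omega u\,\Delta\varphi|\le |\Delta u|_T\,|\varphi|_\infty$ for all such $\varphi$. Since $C_c^\infty(\Omega)$ is dense in $C_0(\Omega)$ (uniform norm), this functional extends uniquely to a bounded linear functional on $C_0(\Omega)$ of norm $\le|\Delta u|_T$, and the Riesz representation theorem produces a finite signed Radon measure $\mu$ with $\int_\Omega\varphi\,d\mu=\int_\Omega u\,\Delta\varphi$ for all $\varphi\in C_c^\infty(\Omega)$ and $|\mu|(\Omega)\le|\Delta u|_T$. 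This identity says precisely that $\Delta u=\mu$ in the distributional sense, so $u$ belongs to the right-hand side, and combined with the previous paragraph we get $|\Delta u|_T=|\mu|(\Omega)=|\Delta u|(\Omega)$. The final equality in \eqref{measures_equiv}, namely that the supremum over $C_c^\infty$ test functions equals the supremum over $C_c$ test functions defining $|\mu|(\Omega)$, follows again from the density of $C_c^\infty(\Omega)$ in $C_c(\Omega)$ with respect to uniform convergence on compact sets, together with an exhaustion/truncation argument to pass from compactly supported continuous functions to the full total variation.

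The main subtlety — rather than a genuine obstacle — is the passage from a functional bounded on $C_c^\infty(\Omega)$ to one represented by a Radon measure: one must be careful that the relevant space is $C_0(\Omega)$ (continuous functions vanishing at the boundary and at infinity), so that Riesz representation yields a measure on $\Omega$ (not charging $\partial\Omega$), and that the density of $C_c^\infty(\Omega)$ in $C_0(\Omega)$ is used in the right topology. The boundary condition $u\in W_0^{1,1}(\Omega)$ is carried along unchanged and is what distinguishes $BL_0(\Omega)$ from the larger set $\{u\in L^1_{loc}:|\Delta u|_T<\infty\}$; it plays no role in the measure-representation argument itself but must be retained in both descriptions. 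All remaining points are standard facts about Radon measures and mollification, so I would only sketch them.
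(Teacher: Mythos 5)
Your argument is correct. The paper itself gives no proof here, simply citing \cite[Proposition~2.2]{PRT12}, and the Riesz-representation route you take (the functional $\varphi\mapsto\int_\Omega u\,\Delta\varphi$ is bounded in sup-norm on $C_c^\infty(\Omega)$, hence extends to $C_0(\Omega)$ and is represented by a finite signed Radon measure whose total variation equals $|\Delta u|_T$, with density of $C_c^\infty$ in $C_c$ handling the final equality) is the standard argument that the cited reference carries out.
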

\begin{proof}
The alternative characterizations of $BL_0(\Omega)$ and of $|\Delta u|_T$ correspond to \cite[Proposition 2.2]{PRT12}. 
\end{proof}

\begin{lemma}\label{lemma:Deltau}
Let $u\in L^1_{\rm loc}(\Omega)$ be such that $\Delta u\in L^1(\Omega)$. Then
\[
|\Delta u|_1=|\Delta u|_T.
\]
In particular, $W^{2,1}_\Delta(\Omega)\subseteq BL_0(\Omega)$ and the inclusion is strict.
\end{lemma}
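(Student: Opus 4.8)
The plan is to prove the two assertions of Lemma~\ref{lemma:Deltau} separately: first the identity $|\Delta u|_1=|\Delta u|_T$ for $u\in L^1_{\rm loc}(\Omega)$ with $\Delta u\in L^1(\Omega)$, and then the strict inclusion $W^{2,1}_\Delta(\Omega)\subsetneq BL_0(\Omega)$.

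\medskip

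\noindent\textbf{Step 1: The identity $|\Delta u|_1=|\Delta u|_T$.}
Assume $u\in L^1_{\rm loc}(\Omega)$ with $f:=\Delta u\in L^1(\Omega)$, meaning $\int_\Omega u\,\Delta\varphi=\int_\Omega f\,\varphi$ for every $\varphi\in C^\infty_c(\Omega)$. From the very definition \eqref{def:Tnorm},
\[
|\Delta u|_T=\sup\Big\{\int_\Omega f\,\varphi:\varphi\in C^\infty_c(\Omega),\ |\varphi|_\infty\leq 1\Big\}.
\]
The inequality $|\Delta u|_T\leq |f|_1$ is immediate, since $\int_\Omega f\,\varphi\leq \int_\Omega|f||\varphi|\leq|f|_1$ whenever $|\varphi|_\infty\leq 1$. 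For the reverse inequality, I would approximate $\operatorname{sgn}(f)$ in a suitable sense: pick a sequence $(\varphi_k)\subset C^\infty_c(\Omega)$ with $|\varphi_k|_\infty\leq 1$ and $\varphi_k\to\operatorname{sgn}(f)$ pointwise a.e.\ on $\Omega$ (this is standard—first approximate $\operatorname{sgn}(f)$ by a compactly supported simple function in $L^1$, then mollify and truncate). By dominated convergence, $\int_\Omega f\,\varphi_k\to\int_\Omega f\,\operatorname{sgn}(f)=|f|_1$, hence $|\Delta u|_T\geq|f|_1$. Combining the two bounds gives $|\Delta u|_1=|\Delta u|_T$. (Alternatively, one may simply cite Lemma~\ref{alt:lem}: when $\Delta u=f\,dx$ with $f\in L^1$, the Radon measure $\Delta u$ has total variation $|\Delta u|(\Omega)=|f|_1$, and \eqref{measures_equiv} identifies this with $|\Delta u|_T$.)

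\medskip

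\noindent\textbf{Step 2: The inclusion $W^{2,1}_\Delta(\Omega)\subseteq BL_0(\Omega)$ and its strictness.}
If $u\in W^{2,1}_\Delta(\Omega)$, then $u\in W^{1,1}_0(\Omega)$ and $\Delta u\in L^1(\Omega)$; by Step~1, $|\Delta u|_T=|\Delta u|_1<\infty$, so $u\in BL_0(\Omega)$ by definition \eqref{def:BL}. For strictness, I would invoke the example already recorded in the paper: as noted in the Notation subsection (citing \cite[Exercise 5.2, p.~83]{PonceBook}), for $\Omega=B_1$ and $N\geq 3$ there exists $u\in W^{2,1}_\Delta(\Omega)\setminus W^{2,1}(\Omega)$; such a $u$ lies in $BL_0(\Omega)$ but its distributional Laplacian, while in $L^1$, does not yield $u\in W^{2,1}$. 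More to the point, the function $G_{B_1}(\cdot,0)=c_N(|x|^{2-N}-1)$ has $\Delta G_{B_1}(\cdot,0)=-\delta_0$, a measure that is not absolutely continuous, so $G_{B_1}(\cdot,0)\in BL_0(B_1)\setminus W^{2,1}_\Delta(B_1)$; this settles strictness for $N\geq 3$. I do not anticipate a genuine obstacle here: Step~1 is a routine density argument (or a direct application of Lemma~\ref{alt:lem}), and strictness is witnessed by an explicit example already available in the text. The only point requiring a little care is ensuring the approximating sequence $(\varphi_k)$ in Step~1 can be taken in $C^\infty_c(\Omega)$ with the uniform bound $|\varphi_k|_\infty\leq1$ preserved under mollification, which holds because convolution with a probability mollifier is an $L^\infty$-contraction.
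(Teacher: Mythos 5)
Your Step 1 is correct and essentially self-contained: the inequality $|\Delta u|_T\leq|\Delta u|_1$ is trivial, and for the converse your density argument (approximate $\operatorname{sgn}(f)$ pointwise a.e.\ by $C^\infty_c(\Omega)$ functions bounded by $1$, then apply dominated convergence with majorant $|f|$) is a standard and valid route. Note, though, that your parenthetical alternative is not a literal application of Lemma~\ref{alt:lem}, since that lemma's identity \eqref{measures_equiv} is stated for $u\in BL_0(\Omega)$, i.e.\ assuming $u\in W^{1,1}_0(\Omega)$ a priori, whereas here we only have $u\in L^1_{\rm loc}(\Omega)$; the underlying measure-theoretic fact is of course true, but the citation as written is slightly circular. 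The paper does not prove Step 1 at all — it simply cites \cite[pp.~313--314]{PRT12} — so your explicit argument is a welcome addition.

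In Step 2, the inclusion is immediate from Step 1, as you say. For strictness, your first suggested witness is the wrong one: a function $u\in W^{2,1}_\Delta(\Omega)\setminus W^{2,1}(\Omega)$ (the \cite{PonceBook} example) demonstrates that $W^{2,1}_\Delta(\Omega)\ne W^{2,1}(\Omega)\cap W^{1,1}_0(\Omega)$, which is a different strict inclusion; such a function still lies in $W^{2,1}_\Delta(\Omega)$, so it says nothing about $W^{2,1}_\Delta(\Omega)\subsetneq BL_0(\Omega)$. Your second example, $G_{B_1}(\cdot,0)$ with $\Delta G_{B_1}(\cdot,0)=-\delta_0$, is the correct witness — and it is exactly the one the paper uses (solution of $-\Delta u=\delta_y$). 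Two small refinements: (i) you restricted to $N\geq3$ because you used the formula $c_N(|x|^{2-N}-1)$, but the Green function argument works for all $N\geq1$ (replace by the appropriate fundamental solution); (ii) to assert strictness on a general admissible $\Omega$, rather than just $B_1$, one should use the Dirichlet Green function $G_\Omega(\cdot,y)$ of $\Omega$ itself, which is what the paper does. These are cosmetic; the substance of your argument is sound and coincides with the paper's.
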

\begin{proof}
The proof of the inclusion can be found in \cite[pages 313--314]{PRT12}. It is strict by the following: a solution of $-\Delta u=\delta_y$ in $\Omega$, $u=0$ on $\partial \Omega$, where $\delta_y$ is the Dirac delta concentrated at $y\in \Omega$, belongs to $BL_0(\Omega)$ but not to $W^{2,1}_\Delta(\Omega)$, see \cite[p. 313]{PRT12}.
\end{proof}

\begin{lemma}[Lower-semicontinuity of $|\Delta\cdot|_T$]\label{lemma:lsc} Let $(u_n)_{n\in\N}\subset BL_0(\Omega)$ be a sequence such that 
\[
u_{n} \to u\quad\text{in }L^{1}(\Omega)\text{ as }n\to\infty
\qquad\text{and}\qquad
\sup_{n\in\N}|\Delta u_n|_T<\infty.
\]
Then $u \in B L_{0}(\Omega)$ and
\[
|\Delta u|_T \leq \liminf _{n\to\infty} |\Delta u_n|_T .
\]
\end{lemma}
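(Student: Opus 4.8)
The plan is to establish the inequality by duality against smooth test functions, and then to upgrade the $L^1$-limit $u$ to an element of $BL_0(\Omega)$ separately.

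First I would prove $|\Delta u|_T\le\liminf_{n\to\infty}|\Delta u_n|_T$ straight from the definition \eqref{def:Tnorm}. Fix $\varphi\in C^\infty_c(\Omega)$ with $|\varphi|_\infty\le 1$. Since $u_n\to u$ in $L^1(\Omega)$ and $\Delta\varphi\in L^\infty(\Omega)$, we have $\int_\Omega u\,\Delta\varphi=\lim_{n\to\infty}\int_\Omega u_n\,\Delta\varphi\le\liminf_{n\to\infty}|\Delta u_n|_T$, where the last inequality uses $\int_\Omega u_n\,\Delta\varphi\le|\Delta u_n|_T$ for every such $\varphi$. Taking the supremum over $\varphi$ gives the claimed bound, and the right-hand side is finite by hypothesis; by Lemma~\ref{alt:lem} this already says that $\Delta u$ is a Radon measure on $\Omega$ with finite total variation.

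The main obstacle is then to check that $u\in W^{1,1}_0(\Omega)$, which is what is still missing to conclude $u\in BL_0(\Omega)$: one cannot argue directly in $W^{1,1}_0(\Omega)$ because it is not reflexive, so a bound on $\|u_n\|_{W^{1,1}_0}$ alone need not produce a convergent subsequence. To get around this I would pass to the reflexive scale $W^{1,r}_0(\Omega)$ with $1<r<1^*$ (a nonempty range for every $N\ge 1$, recalling \eqref{eq:1*1**}). The hypothesis $\sup_n|\Delta u_n|_T<\infty$ says precisely that $(u_n)$ is bounded in $BL_0(\Omega)$, hence, by Proposition~\ref{compact:prop}, bounded in $W^{1,r}_0(\Omega)$ for every $r\in[1,1^*)$; the underlying fact is the Stampacchia-type estimate $\|u_n\|_{W^{1,r}_0}\le C(r)\,|\Delta u_n|(\Omega)$ for the Dirichlet problem with measure datum $\Delta u_n$, cf. Remark~\ref{nico:rmk}. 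Fixing some $r\in(1,1^*)$, reflexivity gives, along a subsequence, $u_n\rightharpoonup w$ weakly in $W^{1,r}_0(\Omega)$ with $w\in W^{1,r}_0(\Omega)$; the compact embedding $W^{1,r}_0(\Omega)\hookrightarrow L^1(\Omega)$ forces $u_n\to w$ in $L^1(\Omega)$ along that subsequence, so $w=u$ and therefore $u\in W^{1,r}_0(\Omega)\subset W^{1,1}_0(\Omega)$. Combined with the first step, this yields $u\in BL_0(\Omega)$ together with $|\Delta u|_T\le\liminf_{n\to\infty}|\Delta u_n|_T$. Apart from this membership check, where the non-reflexivity of $W^{1,1}$ must be handled by lifting to $W^{1,r}_0(\Omega)$ with $r>1$, the argument is the standard lower-semicontinuity proof for total variation, so no further difficulties are expected.
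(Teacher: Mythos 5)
Your first step --- testing against $\varphi\in C^\infty_c(\Omega)$ with $|\varphi|_\infty\le 1$ and passing to the limit using $u_n\to u$ in $L^1$ --- is word for word the paper's own proof. Taking the supremum gives $|\Delta u|_T\le\liminf_n|\Delta u_n|_T<\infty$, so $\Delta u$ is a finite Radon measure; that is where the paper's proof stops.

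Your second step, establishing $u\in W^{1,1}_0(\Omega)$ so that the definition \eqref{def:BL} of $BL_0(\Omega)$ is actually satisfied, addresses a point the paper's sketch does not spell out; as you correctly note, $|\Delta u|_T<\infty$ alone does not place $u$ in $W^{1,1}_0(\Omega)$. Your route --- lift to the reflexive scale $W^{1,r}_0(\Omega)$ for some $r\in(1,1^*)$ via the a priori bound in Proposition~\ref{compact:prop}, extract a weak limit, identify it with $u$ through the compact embedding into $L^1(\Omega)$ --- is sound, and there is no circularity since Proposition~\ref{compact:prop} rests on Corollary~\ref{BL0_inequality} and the Ponce/Stampacchia estimates, not on Lemma~\ref{lemma:lsc}. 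The one caveat is scope: Proposition~\ref{compact:prop} (via Proposition~\ref{prop:equivalence_solutions}) requires $\Omega$ convex or $C^{1,\gamma}$, whereas the Appendix's standing hypothesis, under which Lemma~\ref{lemma:lsc} is nominally stated, is only that $\Omega$ is bounded Lipschitz. You should flag this: your membership argument covers all the situations in which the paper actually invokes the lemma, but it does not settle the general Lipschitz case as literally stated --- nor does the paper itself, which defers to \cite[Remark 2.1]{PRT12} and reproduces only the duality inequality.
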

\begin{proof}
See \cite[Remark 2.1]{PRT12}. For completeness, we include the proof here. Take $\varphi\in C^\infty_c(\Omega)$ with $|\varphi|_\infty\leq 1$. Then
\[
\int_\Omega u\,\Delta \varphi 
=\lim_{n\to\infty}\int_\Omega u_n\,\Delta\varphi 
\leq \liminf_{n\to\infty}|\Delta u_n|_T,
\]
and the result now follows by taking the supremum on $\varphi$.
\end{proof}

Next, we recall a few things about the notion of solution to linear Dirichlet boundary problems with measurable data (see \cite[Definition 3.1]{PonceBook} and references before it).

\begin{definition}[Littman, Stampacchia, and Weinberger \cite{LSW63}]\label{sense_of_Stampacchia} 
Let $\mu$ be a finite Radon measure in $\Omega$. We say that $u\in L^1(\Omega)$ is a very weak solution\footnote{Following the wording of, for example, \cite{zbMATH00840906}.} to
\begin{equation}\label{linear_problem_meas}
-\Delta u=\mu \text{ in } \Omega,\qquad u=0 \text{ on }\quad\partial\Omega,
\end{equation}
if, for every $\varphi\in C^\infty_0(\overline \Omega)=\{\zeta\in C^\infty(\overline\Omega):\ \zeta=0\text{ on }\partial\Omega\}$,
\begin{equation}\label{eq:notion_solution}
\int_\Omega \varphi\, d\mu= -\int_\Omega u\,\Delta \varphi.
\end{equation}
\end{definition}

\begin{remark}
The authors of \cite{PRT12} call this a solution \emph{in the sense of Stampacchia}, but take, instead, test functions $\varphi\in W^{1,2}_0(\Omega)\cap C(\overline \Omega)$ such that $\Delta \varphi\in C(\overline \Omega)$. 
\end{remark}

\begin{definition}[Distributional solutions]
Let $\mu$ be a finite Radon measure in $\Omega$. We say that $u\in L^1_{loc}(\Omega)$ is a distributional solution to
\[
-\Delta u=\mu \text{ in } \Omega
\]
if \eqref{eq:notion_solution} holds for every $\varphi\in C^\infty_c(\Omega)$.
\end{definition}

Related to this, we have the following result that was shown in \cite[Theorem 5.1]{LSW63} and \cite[Theor\`eme 9.1]{Stampacchia} (see also \cite[Proposition 5.1]{PonceBook}).

\begin{theorem}\label{th:equivalence_solutions} Let $\mu$ be a finite Radon measure on $\Omega$. Then there exists exactly one very weak solution $u$ of \eqref{linear_problem_meas}.
Moreover, for every $q\in [1,1^*)$, we have $u\in W^{1,q}_0(\Omega)$ and there exists a universal constant $C>0$ such that
\[
\|u\|_{W^{1,q}_0}\leq C |\mu|(\Omega).
\]
\end{theorem}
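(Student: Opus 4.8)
The plan is to combine an existence argument by approximation with a uniqueness argument based on duality, and then to extract the Sobolev bound from a single a~priori estimate. First I would address \emph{uniqueness}: if $u_1,u_2$ are two very weak solutions of \eqref{linear_problem_meas} associated with the same measure $\mu$, then $w=u_1-u_2\in L^1(\Omega)$ satisfies $\int_\Omega w\,\Delta\varphi=0$ for every $\varphi\in C^\infty_0(\overline\Omega)$. Given any $g\in C^\infty_c(\Omega)$, by standard elliptic regularity the problem $-\Delta\varphi=g$ in $\Omega$, $\varphi=0$ on $\partial\Omega$ has a solution $\varphi\in C^\infty_0(\overline\Omega)$ (using here the smoothness, or at least $C^{2,\gamma}$, regularity of $\partial\Omega$; for merely Lipschitz $\Omega$ one uses instead the $C^0(\overline\Omega)$ test functions with $\Delta\varphi\in C^0(\overline\Omega)$ as in the remark following Definition~\ref{sense_of_Stampacchia}). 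Hence $\int_\Omega w\,g=0$ for all such $g$, so $w=0$ a.e.\ in $\Omega$, giving uniqueness.

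For \emph{existence and the a~priori estimate}, I would first establish the key duality bound. Fix $q\in[1,1^*)$ and let $q'\in(N,\infty]$ be its conjugate exponent. For $\Phi\in C^\infty_c(\Omega;\R^N)$ with $|\Phi|_{q'}\le 1$, solve $-\Delta\varphi=\operatorname{div}\Phi$ in $\Omega$, $\varphi=0$ on $\partial\Omega$; by Calderón--Zygmund / Agmon--Douglis--Nirenberg estimates and Morrey's embedding ($q'>N$), one has $\varphi\in C^0(\overline\Omega)$ with $\|\varphi\|_{L^\infty}\le C\|\Phi\|_{q'}\le C$, with $C=C(N,q,\Omega)$, and $\varphi$ is an admissible test function. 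Then, for a would-be solution $u$,
\[
\int_\Omega u\,\operatorname{div}\Phi=-\int_\Omega u\,\Delta\varphi=\int_\Omega\varphi\,d\mu,
\qquad\text{so}\qquad
\Big|\int_\Omega u\,\operatorname{div}\Phi\Big|\le \|\varphi\|_{L^\infty}|\mu|(\Omega)\le C\,|\mu|(\Omega).
\]
Taking the supremum over such $\Phi$ identifies $\nabla u$ as an $L^q$ function with $\|\nabla u\|_q\le C|\mu|(\Omega)$; combined with Poincaré this yields $\|u\|_{W^{1,q}_0}\le C|\mu|(\Omega)$. To actually \emph{produce} $u$, I would approximate $\mu$ by smooth data: take $\mu_n:=\rho_n*\mu$ (mollified, restricted to $\Omega$, or more carefully cut off near $\partial\Omega$) so that $\mu_n\to\mu$ weakly-$*$ with $\sup_n\int_\Omega|\mu_n|\le|\mu|(\Omega)$, solve $-\Delta u_n=\mu_n$, $u_n=0$ on $\partial\Omega$ with $u_n\in C^\infty(\overline\Omega)$, apply the uniform bound $\|u_n\|_{W^{1,q}_0}\le C|\mu|(\Omega)$, and pass to a weak limit $u_n\weakto u$ in $W^{1,q}_0(\Omega)$ (and strongly in $L^1$ by compact embedding). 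Passing to the limit in $\int_\Omega\varphi\,d\mu_n=-\int_\Omega u_n\,\Delta\varphi$ for each fixed $\varphi\in C^\infty_0(\overline\Omega)$ gives that $u$ is a very weak solution, and lower semicontinuity of the norm preserves the estimate.

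The main obstacle I anticipate is the \emph{duality/regularity step} producing the $L^\infty$ bound on the dual test function $\varphi$: one needs $q'>N$ precisely so that $W^{1,q'}\hookrightarrow C^0(\overline\Omega)$, and one must be careful that the elliptic estimate for $-\Delta\varphi=\operatorname{div}\Phi$ with zero boundary data holds up to the boundary with a constant depending only on $N,q,\Omega$ — this is where the regularity of $\partial\Omega$ enters, and for a merely Lipschitz domain one has to invoke the appropriate $W^{1,q'}$-regularity theory (or, as \cite{LSW63,Stampacchia} do, work directly with the Green function and its pointwise bounds). Everything else (uniqueness, the approximation, and the limit passage) is routine once this estimate is in hand; indeed the statement is classical and I would simply cite \cite[Theorem 5.1]{LSW63} and \cite[Théorème 9.1]{Stampacchia} for the full details, presenting the above only as a sketch.
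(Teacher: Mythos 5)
The paper does not prove Theorem~\ref{th:equivalence_solutions}; it treats it as classical and simply cites \cite[Theorem 5.1]{LSW63}, \cite[Th\'eor\`eme 9.1]{Stampacchia}, and \cite[Proposition 5.1]{PonceBook}. Your proposal ends by deferring to the same two primary references, so in that sense you arrive at exactly what the paper does, and your preceding sketch is a fair reconstruction of the underlying duality argument.

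One technical caveat worth being explicit about, since the appendix's standing hypothesis is that $\Omega$ is merely a bounded Lipschitz domain. The step where you solve $-\Delta\varphi=\operatorname{div}\Phi$, $\varphi=0$ on $\partial\Omega$, and claim $\|\varphi\|_{W^{1,q'}}\lesssim\|\Phi\|_{q'}$ followed by Morrey's embedding is \emph{not} available in this generality: $W^{1,p}$-solvability of the Dirichlet Laplacian on a Lipschitz domain is restricted to a bounded range of $p$ (Jerison--Kenig), which in general excludes $p=q'>N$. Likewise, the pointwise bound $|\nabla_y G_\Omega(x,y)|\lesssim|x-y|^{1-N}$ that would salvage the estimate via the representation formula needs $C^{1,\gamma}$ or convex boundary, not just Lipschitz. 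What Stampacchia actually does to obtain the crucial bound $\|\varphi\|_\infty\le C$ on a general bounded open set is his truncation method (the level-set argument that now bears his name), not Calder\'on--Zygmund regularity; the duality step you describe then closes the loop. Your uniqueness argument has the same flavor of issue (one cannot in general produce admissible $\varphi\in C^\infty_0(\overline\Omega)$ by solving $-\Delta\varphi=g$ on a Lipschitz domain), but again the cited references handle this by working with a weaker test class. Since you explicitly flag these points and fall back on the citations, the proposal is sound; it is just worth knowing that for Lipschitz $\Omega$ the Calder\'on--Zygmund/Morrey route genuinely fails and the truncation argument is the one doing the work.
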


\begin{remark}\label{nico:rmk}
We note that Theorem~\ref{th:equivalence_solutions} can be used to build the Green function $G_\Omega$ of a Lipschitz domain $\Omega$, and to deduce some of its properties. Indeed, for every fixed $y\in\Omega$, $G(\cdot, y)$ can be defined as the unique very weak solution to 
\[
-\Delta u=\delta_y \text{ in } \Omega,\qquad u=0 \text{ on }\partial\Omega,
\]
where $\delta_y$ stands for the Dirac delta centred at $y$. Then, by Definition~\ref{sense_of_Stampacchia}, for every $\varphi\in C^\infty_0(\overline{\Omega})$ it holds
\begin{align}\label{reprrrr}
\varphi(y)=-\int_\Omega G_\Omega(x,y)\,\Delta\varphi(x)\;dx,
\end{align}
which is a representation formula for $\varphi$.
Take now $\varphi,\psi\in C^\infty_0(\overline{\Omega})$ such that $\Delta\varphi,\Delta\psi\in C^\infty_c(\Omega)$: then, on the one hand, using \eqref{reprrrr} above, one obtains
\[
-\int_\Omega\varphi(y)\,\Delta\psi(y)\;dy=-\int_\Omega\bigg(\int_\Omega G_\Omega(x,y)\,\Delta\psi(y)\;dy\bigg)\Delta\varphi(x)\;dx,
\]
while, on the other hand, using \eqref{reprrrr} on $\psi$,
\[
-\int_\Omega\varphi(x)\,\Delta\psi(x)\;dx=-\int_\Omega\psi(x)\,\Delta\varphi(x)\;dx=-\int_\Omega\bigg(\int_\Omega G(y,x)\,\Delta\psi(y)\;dy\bigg)\Delta\varphi(x)\;dx.
\]
From this, one deduces 
\[
\int_\Omega G_\Omega(x,y)\,\Delta\psi(y)\;dy=\int_\Omega G_\Omega(y,x)\,\Delta\psi(y)\;dy
\qquad\text{for a.e. }x\in\Omega.
\]
and therefore
\[
G_\Omega(x,y)=G_\Omega(y,x) 
\qquad\text{for a.e. }x,y\in\Omega.
\]
Furthermore, the unique very weak solution to \eqref{linear_problem_meas} can be represented by
\[
u(x)=\int_\Omega G_\Omega(x,y)\,d\mu(y)
\qquad\text{for a.e. }x\in\Omega,
\]
because, for every $\varphi\in C^\infty_0(\overline\Omega)$, using \eqref{eq:notion_solution}, \eqref{reprrrr}, and Fubini's theorem:
\[
-\int_\Omega u\,\Delta \varphi=\int_\Omega\varphi\;d\mu=-\int_\Omega\bigg(\int_\Omega G_\Omega(x,y)\;d\mu(y)\bigg)\;\Delta\varphi(x)\;dx.
\]
\end{remark}

The following result shows the equivalence between the two notions of solution under geometric or regularity assumptions on $\Omega$.

\begin{prop}\label{prop:equivalence_solutions}
Let $\Omega$ be a bounded domain, which is either convex or of class $C^{1,\gamma}$, for some $\gamma\in (0,1]$. Take a finite Radon measure $\mu$ on $\Omega$. Then $u \in W^{1,1}_0(\Omega)$ is a distributional solution of \eqref{linear_problem_meas} if, and only if, $u\in L^1(\Omega)$ is a very weak solution of \eqref{linear_problem_meas}. 
\end{prop}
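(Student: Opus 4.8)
The plan is to prove both implications separately, noting that the nontrivial direction is ``distributional $\Rightarrow$ very weak''. The reverse implication is immediate: any very weak solution is in particular a distributional solution, since $C^\infty_c(\Omega)\subset C^\infty_0(\overline\Omega)$, so testing \eqref{eq:notion_solution} against all $\varphi\in C^\infty_0(\overline\Omega)$ includes testing against all $\varphi\in C^\infty_c(\Omega)$. Moreover, by Theorem~\ref{th:equivalence_solutions}, a very weak solution automatically lies in $W^{1,q}_0(\Omega)$ for every $q\in[1,1^*)$, hence in $W^{1,1}_0(\Omega)$. So it remains to show that a distributional solution $u\in W^{1,1}_0(\Omega)$ of \eqref{linear_problem_meas} is a very weak solution, i.e. that the identity $\int_\Omega \varphi\,d\mu = -\int_\Omega u\,\Delta\varphi$ extends from test functions $\varphi\in C^\infty_c(\Omega)$ to all $\varphi\in C^\infty_0(\overline\Omega)$.

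The key idea is a density/approximation argument combined with the geometric or regularity hypothesis on $\Omega$, which guarantees elliptic regularity up to the boundary for the Dirichlet Laplacian (this is precisely where convexity or $C^{1,\gamma}$ smoothness is used; on such domains $W^{2,p}_\Delta(\Omega)=W^{2,p}(\Omega)\cap W^{1,p}_0(\Omega)$ for $p>1$, as recalled in the excerpt). First I would let $u$ be a distributional solution with $u\in W^{1,1}_0(\Omega)$, and let $\tilde u\in W^{1,1}_0(\Omega)$ be the unique very weak solution of \eqref{linear_problem_meas} provided by Theorem~\ref{th:equivalence_solutions}; by the already-established reverse implication, $\tilde u$ is also a distributional solution. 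Then $w:=u-\tilde u\in W^{1,1}_0(\Omega)$ satisfies $-\Delta w=0$ in $\Omega$ in the distributional sense, so by Weyl's lemma $w$ is harmonic (hence smooth) in $\Omega$. The goal reduces to showing that a harmonic function $w\in W^{1,1}_0(\Omega)$ on such a domain must vanish identically, which would give $u=\tilde u$ and hence that $u$ is a very weak solution.

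To finish, I would argue that $w\equiv 0$ as follows. Since $w\in W^{1,1}_0(\Omega)$ is harmonic, for any $\varphi\in C^\infty_0(\overline\Omega)$ one wants $\int_\Omega w\,\Delta\varphi=0$; approximate $\varphi$ by a sequence $\varphi_k\in C^\infty_c(\Omega)$ with $\varphi_k\to\varphi$ appropriately — but the subtlety is that one cannot control $\Delta\varphi_k$ merely from $\varphi_k\to\varphi$, so instead I would test the harmonicity of $w$ against a well-chosen family. Concretely, for fixed $\varphi\in C^\infty_0(\overline\Omega)$ solve $-\Delta\psi=\Delta\varphi$ in $\Omega$, $\psi=0$ on $\partial\Omega$: by the elliptic regularity afforded by the hypothesis on $\Omega$, $\psi\in W^{2,p}(\Omega)\cap W^{1,p}_0(\Omega)$ for $p$ large, in particular $\psi\in W^{1,\infty}$ or at least $\psi\in W^{1,\infty'}$-dual-compatible with $W^{1,1}_0$; then $\varphi+\psi$ is harmonic and smooth up to the boundary, so it can be approximated in $W^{2,\infty}_{loc}$-type norms by $C^\infty_c(\Omega)$ cutoffs in a way controlled by the harmonicity of $w$, and an integration by parts (valid because $w\in W^{1,1}_0$ and $\psi$ has enough regularity near $\partial\Omega$ with zero trace) yields $\int_\Omega w\,\Delta\varphi=\int_\Omega w\,(-\Delta\psi)=0$ after using $w\,\Delta w=0$; since this holds for all $\varphi\in C^\infty_0(\overline\Omega)$, and these $\Delta\varphi$ span a dense enough set, one concludes $w=0$. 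The main obstacle is exactly this last density/integration-by-parts step: making rigorous that the distributional identity against $C^\infty_c(\Omega)$ upgrades to an identity against $C^\infty_0(\overline\Omega)$ requires the boundary regularity to pass the integration by parts through the boundary without boundary terms, which is where the convexity or $C^{1,\gamma}$ assumption is essential and where I would expect to cite a known elliptic-regularity result (e.g. for convex domains, the $W^{2,2}$ regularity of the Dirichlet Laplacian) rather than reprove it.
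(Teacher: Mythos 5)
Your treatment of the easy direction (``very weak $\Rightarrow$ distributional'') is correct and matches the paper's exactly: inclusion of test spaces plus the $W^{1,1}_0$-regularity from Theorem~\ref{th:equivalence_solutions}. For the converse, however, the paper does not give a proof at all --- it simply cites \cite[Proposition~4.3]{PRT12}. So you are attempting to supply an argument the paper deliberately outsources, and that attempt has a real gap.

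Your reduction is the right one: subtracting the very weak solution $\tilde u$ from Theorem~\ref{th:equivalence_solutions}, you need to show that a distributionally harmonic function $w\in W^{1,1}_0(\Omega)$ on a convex or $C^{1,\gamma}$ domain vanishes. (This is indeed where the geometric hypothesis enters; the paper's own Remark~\ref{rem:inequality} records a Lipschitz counterexample.) But the argument you sketch for this step does not go through. First, the auxiliary function $\psi$ solving $-\Delta\psi=\Delta\varphi$, $\psi=0$ on $\partial\Omega$, is nothing but $\psi=-\varphi$: since $\varphi\in C^\infty_0(\overline\Omega)$ already solves $-\Delta(-\varphi)=\Delta\varphi$ with zero boundary data, uniqueness forces $\psi=-\varphi$. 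So the construction is vacuous and you are back to needing $\int_\Omega w\,\Delta\varphi=0$ directly, which is the entire content of the step. Second, the crucial passage --- upgrading the distributional identity against $C^\infty_c(\Omega)$ to an identity against $C^\infty_0(\overline\Omega)$ --- is exactly the place where the integration by parts near $\partial\Omega$ must be justified, and you assert it without a concrete approximation scheme; the phrases ``controlled by the harmonicity of $w$'', ``after using $w\,\Delta w=0$'', and ``$W^{1,\infty'}$-dual-compatible'' do not resolve into a rigorous argument. A workable route would be: solve $-\Delta v=g$ in $\Omega$, $v=0$ on $\partial\Omega$ for arbitrary $g\in C^\infty_c(\Omega)$, show $\nabla v\in L^\infty(\Omega)$ (this is where convexity or $C^{1,\gamma}$ smoothness enters), approximate $v$ by cutoffs $v\eta_k$ with $\eta_k\in C^\infty_c(\Omega)$, noting that the Lipschitz bound $|v|\lesssim\dist(\cdot,\partial\Omega)$ keeps $\nabla(v\eta_k)$ uniformly bounded, so $\nabla(v\eta_k)\rightharpoonup^*\nabla v$ weak-$*$ in $L^\infty$; then $0=\int_\Omega w\,\Delta(v\eta_k)=-\int_\Omega\nabla w\cdot\nabla(v\eta_k)\to-\int_\Omega\nabla w\cdot\nabla v=\int_\Omega w\,\Delta v=-\int_\Omega wg$, giving $w\equiv0$. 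Finally, your claim $\psi\in W^{2,p}(\Omega)$ for large $p$ implicitly assumes $C^{1,1}$ boundary regularity; on a merely $C^{1,\gamma}$ (with $\gamma<1$) or convex domain the full $W^{2,p}$ scale is not available, and only the weaker gradient bound $\nabla v\in L^\infty$ survives, which is what the argument actually needs. As it stands, the nontrivial half of the proposition is not established in your proposal.
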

\begin{proof}
  Let  $u\in L^1(\Omega)$ be a solution of \eqref{linear_problem_meas} in the sense of Definition~\ref{sense_of_Stampacchia} (i.e., a very weak solution). Then $u\in W^{1,1}_0(\Omega)$, by Theorem~\ref{th:equivalence_solutions}.  Moreover, $u$ is a distributional solution since $C^\infty_c(\Omega)\subseteq C^{\infty}_0(\overline \Omega)$.

  The converse statement, on the other hand, is the content of \cite[Proposition 4.3]{PRT12}.
\end{proof}

\begin{coro}\label{BL0_inequality}
    Let $\Omega$ be a domain, either convex or of class $C^{1,\gamma}$, for some $\gamma\in (0,1]$. Given $q\in [1,1^*)$, there exists $C>0$ such that
    \begin{equation}\label{eq:inequality_BL0}
    \|u\|_{W^{1,q}_0} \leq C |\Delta u|_T \qquad \text{for any } u\in BL_0(\Omega).
    \end{equation}
\end{coro}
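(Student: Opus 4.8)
\begin{proof}[Proof of Corollary~\ref{BL0_inequality}]
The plan is to identify an element of $BL_0(\Omega)$ with the (very weak) solution of a Dirichlet problem with measure datum, and then invoke the a priori estimate of Theorem~\ref{th:equivalence_solutions}.

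Let $u\in BL_0(\Omega)$. By the alternative characterization in Lemma~\ref{alt:lem}, $u\in W^{1,1}_0(\Omega)$ and $\mu:=\Delta u$ is a (finite, signed) Radon measure on $\Omega$ with $|\mu|(\Omega)=|\Delta u|_T<\infty$. In particular, writing $\nu:=-\mu$, the identity $\int_\Omega u\,\Delta\varphi=\int_\Omega \varphi\,d\mu=-\int_\Omega\varphi\,d\nu$ for every $\varphi\in C^\infty_c(\Omega)$ says precisely that $u\in W^{1,1}_0(\Omega)$ is a distributional solution of $-\Delta u=\nu$ in $\Omega$.

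Since $\Omega$ is either convex or of class $C^{1,\gamma}$, Proposition~\ref{prop:equivalence_solutions} applies to the finite Radon measure $\nu$, and guarantees that $u$ is in fact a very weak solution of $-\Delta u=\nu$ in $\Omega$, $u=0$ on $\partial\Omega$, in the sense of Definition~\ref{sense_of_Stampacchia}. By Theorem~\ref{th:equivalence_solutions}, for every $q\in[1,1^*)$ we then have $u\in W^{1,q}_0(\Omega)$ together with the estimate
\[
\|u\|_{W^{1,q}_0}\leq C\,|\nu|(\Omega)=C\,|\mu|(\Omega)=C\,|\Delta u|_T,
\]
with $C>0$ a constant depending only on $\Omega$ and $q$ (not on $u$). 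As $u\in BL_0(\Omega)$ was arbitrary, this proves \eqref{eq:inequality_BL0}.
\end{proof}

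Nothing in this argument is genuinely delicate: it is a chaining of results already established in the appendix. The only point that deserves a moment's care is the passage from ``$\Delta u$ is a Radon measure'' to ``$u$ is the very weak solution'', which is exactly where the geometric/regularity hypothesis on $\Omega$ enters through Proposition~\ref{prop:equivalence_solutions}; without it, a distributional solution need not coincide with the very weak solution, and the estimate of Theorem~\ref{th:equivalence_solutions} could not be invoked.
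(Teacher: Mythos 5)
Your proof is correct and follows essentially the same route as the paper: identify $\Delta u$ as a finite Radon measure, observe $u$ is a distributional solution of the corresponding Dirichlet problem, upgrade to a very weak solution via Proposition~\ref{prop:equivalence_solutions} (this is where convexity or $C^{1,\gamma}$ regularity enters), and then invoke the a priori estimate of Theorem~\ref{th:equivalence_solutions} together with \eqref{measures_equiv}. The only cosmetic difference is the explicit sign change $\nu=-\mu$, which the paper handles implicitly.
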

\begin{proof}
Any $u\in BL_0(\Omega)$ is a $W^{1,1}_0(\Omega)$--distributional solution of the equation
\begin{equation}\label{eq:dist_aux}
\Delta w=\Delta u \text{ in } \Omega.
\end{equation}
Indeed, for $\varphi\in C^\infty_c(\Omega)$, by the embedding of the space of Radon measures in the space of distributions and since $\Delta u\in \mathcal{D}'(\Omega)$ is a Radon measure:
\[
\int_\Omega \Delta u \varphi = \langle \Delta u,\varphi\rangle_{\mathcal{D}'(\Omega)\times \mathcal{D}(\Omega)}=\int_\Omega u\Delta \varphi.
\]
Then, by Theorem~\ref{th:equivalence_solutions} and Proposition~\ref{prop:equivalence_solutions}, $u$ is the (unique) very weak solution of \eqref{eq:dist_aux} with zero Dirichlet boundary conditions. The result now follows directly from Theorem~\ref{th:equivalence_solutions} and \eqref{measures_equiv}.
\end{proof}

\begin{remark}\label{rem:inequality}
    In \cite[p.  318]{PRT12}, the authors give an example of a Lipschitz bounded domain in $\R^2$ and of a \emph{nontrivial} function $u\in BL_0(\Omega)$ such that $\Delta u=0$ in the classical sense. Then, in particular, $u$ is a distributional solution of
    \begin{equation}\label{eq:harmonic}
    -\Delta u=0 \text{ in } \Omega.
    \end{equation}
    Therefore, it is not a very weak solution of \eqref{eq:harmonic} with $u=0$ on $\partial \Omega$(which is unique and is the trivial one, by Theorem~\ref{th:equivalence_solutions}). This also shows that an inequality like \eqref{eq:inequality_BL0} may not hold for a general Lipschitz domain, and more regularity  is needed (or convexity).
\end{remark}

\begin{lemma}[Equivalent norms]\label{equiv:lem} The following are Banach spaces:
\begin{enumerate}
\item ($\Omega$ is of class $C^{1,1}$) $W^{2,p}_\Delta(\Omega)$ ($p>1$),  when endowed with 
$|\Delta u |_p\sim \|u\|_{W^{1,p}}+|\Delta u|_p\sim \|u\|_{W^{2,p}}$. In particular, $W^{2,p}_\Delta(\Omega)=W^{2,p}(\Omega)\cap W^{1,p}_0(\Omega)$.

\item ($\Omega$ convex, or $C^{1,\gamma}$, for some $\gamma\in (0,1]$) $W^{2,1}_\Delta(\Omega)$,  when endowed with $|\Delta u |_1\sim \|u\|_{W^{1,1}}+|\Delta u|_1$.
\item ($\Omega$ convex, or $C^{1,\gamma}$, for some $\gamma\in (0,1]$) $BL_0(\Omega)$, when endowed with $|\Delta u |_T\sim \|u\|_{W^{1,1}}+|\Delta u|_T$.
\end{enumerate}
\end{lemma}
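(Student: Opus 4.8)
The statement collects three facts about function spaces, and in each case the key is an \emph{a priori} estimate that allows one to upgrade the natural seminorm $|\Delta\cdot|_p$ (or $|\Delta\cdot|_T$) to a genuine norm controlling the full Sobolev norm, plus a completeness argument. The plan is to treat the three items in order of increasing difficulty, namely (1), then (2), then (3), since (2) and (3) rely on the delicate regularity theory for the Poisson problem with $L^1$ or measure data in convex or $C^{1,\gamma}$ domains, while (1) is the classical elliptic $L^p$ theory.

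\textbf{Item (1).} First I would recall that for $p>1$ and $\Omega$ of class $C^{1,1}$, the Calder\'on--Zygmund estimate for the Dirichlet Laplacian gives $\|u\|_{W^{2,p}}\leq C\big(\|u\|_{L^p}+|\Delta u|_p\big)$ for $u\in W^{2,p}(\Omega)\cap W^{1,p}_0(\Omega)$, and then Poincar\'e's inequality (valid since $u\in W^{1,p}_0(\Omega)$) removes the lower-order term, yielding $\|u\|_{W^{2,p}}\leq C|\Delta u|_p$; the reverse inequality $|\Delta u|_p\leq C\|u\|_{W^{2,p}}$ is trivial. This shows the three norms are equivalent on $W^{2,p}(\Omega)\cap W^{1,p}_0(\Omega)$. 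To identify this space with $W^{2,p}_\Delta(\Omega)$, note the inclusion $\subseteq$ is immediate, and conversely if $u\in W^{1,p}_0(\Omega)$ with $\Delta u\in L^p(\Omega)$, then solving the Dirichlet problem $-\Delta w=-\Delta u$ with $w\in W^{2,p}\cap W^{1,p}_0$ and using uniqueness of very weak solutions (Theorem~\ref{th:equivalence_solutions} and Proposition~\ref{prop:equivalence_solutions}, here applicable since $L^p\hookrightarrow L^1\hookrightarrow$ measures) gives $w=u$, hence $u\in W^{2,p}(\Omega)$. Completeness then follows: a Cauchy sequence for $|\Delta\cdot|_p$ is Cauchy in $W^{2,p}(\Omega)$ by the norm equivalence, hence converges in $W^{2,p}$, and the limit lies in $W^{2,p}_\Delta(\Omega)$.

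\textbf{Items (2) and (3).} For these I would invoke Corollary~\ref{BL0_inequality}, which already provides, for $\Omega$ convex or $C^{1,\gamma}$ and any $q\in[1,1^*)$, the estimate $\|u\|_{W^{1,q}_0}\leq C|\Delta u|_T$ for all $u\in BL_0(\Omega)$; in particular (taking $q=1$) $\|u\|_{W^{1,1}}\leq C|\Delta u|_T$, which immediately gives the claimed norm equivalence $|\Delta u|_T\sim \|u\|_{W^{1,1}}+|\Delta u|_T$ on $BL_0(\Omega)$, and the same estimate restricted to $W^{2,1}_\Delta(\Omega)$ (where $|\Delta u|_T=|\Delta u|_1$ by Lemma~\ref{lemma:Deltau}) gives the norm equivalence in item (2). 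It remains to prove completeness. Let $(u_n)\subset BL_0(\Omega)$ be Cauchy for $|\Delta\cdot|_T$; by the estimate it is Cauchy in $W^{1,1}_0(\Omega)$, hence $u_n\to u$ in $W^{1,1}_0(\Omega)$ for some $u$, and in particular in $L^1(\Omega)$. The sequence $(|\Delta u_n|_T)$ is bounded, so by the lower-semicontinuity Lemma~\ref{lemma:lsc} we get $u\in BL_0(\Omega)$ with $|\Delta u|_T\leq\liminf|\Delta u_n|_T$. To see $|\Delta(u_n-u)|_T\to 0$, apply the same lower-semicontinuity to the sequence $(u_n-u_m)_n$ for fixed $m$: $|\Delta(u-u_m)|_T\leq\liminf_{n}|\Delta(u_n-u_m)|_T\leq\sup_{n\geq m}|\Delta(u_n-u_m)|_T$, which tends to $0$ as $m\to\infty$ by the Cauchy property. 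Completeness of $W^{2,1}_\Delta(\Omega)$ in item (2) is then a consequence: a sequence Cauchy for $|\Delta\cdot|_1$ converges in $BL_0(\Omega)$ to some $u$, and since $\Delta u_n\to$ some $g$ in $L^1(\Omega)$ while $\Delta u_n\to\Delta u$ as measures, one gets $\Delta u=g\in L^1(\Omega)$, so $u\in W^{2,1}_\Delta(\Omega)$.

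\textbf{Main obstacle.} The delicate point is not the functional-analytic bookkeeping but the input estimate Corollary~\ref{BL0_inequality}, i.e.\ the fact that for convex or $C^{1,\gamma}$ domains an element of $BL_0(\Omega)$ (which a priori only has a distributional Laplacian that is a finite measure and satisfies $u\in W^{1,1}_0$) is automatically the \emph{very weak} Stampacchia solution, so that the Littman--Stampacchia--Weinberger estimate $\|u\|_{W^{1,q}_0}\leq C|\Delta u|(\Omega)$ applies. This equivalence of the two notions of solution genuinely fails on general Lipschitz domains (Remark~\ref{rem:inequality}), which is exactly why the geometric/regularity hypothesis cannot be dropped in items (2) and (3); fortunately this has already been established in the excerpt, so here it only needs to be cited.
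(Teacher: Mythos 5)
Your proposal is correct and follows essentially the same route as the paper: the Calder\'on--Zygmund $L^p$ estimate for item (1), the Littman--Stampacchia--Weinberger estimate (via Corollary~\ref{BL0_inequality} and Proposition~\ref{prop:equivalence_solutions}) for the norm equivalences in items (2) and (3), and completeness obtained from the lower-semicontinuity of $|\Delta\cdot|_T$ applied along a Cauchy sequence (your indexing swaps the roles of $n$ and $m$ relative to the paper's version, but the argument is the same). The only place you go beyond what the paper writes is in item (1), where you spell out the identification $W^{2,p}_\Delta(\Omega)=W^{2,p}(\Omega)\cap W^{1,p}_0(\Omega)$ via uniqueness of very weak solutions; the paper leaves this step implicit (``and so the conclusion follows''), so your elaboration is a reasonable, consistent filling-in rather than a different approach.
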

\begin{proof}
 1. From elliptic regularity theory (see for instance  in \cite[Lemma 9.17]{GT98}), we have the existence of $C>0$ such that
\[
\|u\|_{W^{2,p}}\leq C |\Delta u|_p \qquad \text{for every }u\in W^{2,p}(\Omega)\cap W^{1,p}_0(\Omega).
\] 
and so the conclusion follows.

 3. As for $BL_0(\Omega)$, the fact that it is a Banach space when endowed with $\|u\|_{W^{1,1}(\Omega)}+|\Delta u|_T$ follows from \cite[Proposition 2.3]{BP18}. We include here the proof for completeness. Take a Cauchy sequence ${(u_n)}_{n\in\N}\subseteq BL_0(\Omega)$. Then ${(u_n)}_{n\in\N}$ is a Cauchy sequence in $W^{1,1}_0(\Omega)$; since this space is complete, there exists $u\in W^{1,1}_0(\Omega)$ such that 
 \[
 u_n \to u \quad \text{ in } W^{1,1}_0(\Omega), \text{ hence also in } L^1(\Omega).
 \]
 Given $\eps>0$, take $\bar n\in\N$ such that
 \[
 |\Delta u_n-\Delta u_m|_T<\eps \qquad \text{ for every $n,m\geq \bar n$.} 
 \]
 Since, for each $n\geq \bar n$, we have $u_n-u_m\in BL_0(\Omega)$, $u_n-u_m\to u_n-u$ as $m\to \infty$ in $L^1(\Omega)$ and $\sup_{m\geq \bar n}|\Delta (u_n-u_m)|_T<\infty$, then by Lemma~\ref{lemma:lsc} we have $u_n-u\in BL_0(\Omega)$ (so that also $u\in BL_0(\Omega)$) and
 \[
 |\Delta u_n-\Delta u|_T=|\Delta (u_n-u)|_T\leq \lim_{m\to\infty} |\Delta (u_n-u_m)|_T\leq \varepsilon.
 \]
 Therefore, $|\Delta u_n-\Delta u|_T\to 0$ as $n\to\infty$.
 
 The equivalence of the norms $|\Delta u |_T\sim \|u\|_{W^{1,1}}+|\Delta u|_T$ is a direct consequence of Corollary~\ref{BL0_inequality}, which yields (for $q=1$):
 \begin{equation}\label{eq:Stampacchia}
 \|u\|_{W^{1,1}}\leq C |\Delta u|_T.
 \end{equation}

\smallbreak

2.  As for $W^{2,1}_\Delta(\Omega)$, the  equivalence of the norms is a consequence of \eqref{eq:Stampacchia} together with the fact that $|\Delta u|_1=|\Delta u|_T$ when $u\in W^{2,1}_\Delta(\Omega)$ (Lemma~\ref{lemma:Deltau}). The fact that $W^{2,1}_\Delta(\Omega)$ is a Banach space is shown in \cite[Proposition 11]{CassaniRufTarsi}, but we include a proof for completeness: taking a Cauchy sequence ${(u_n)}_{n\in\N}$ in $W^{2,1}_\Delta(\Omega)$, then ${(u_n)}_{n\in\N}$ is a Cauchy sequence in $W^{1,1}_0(\Omega)$ and ${(\Delta u_n)}_{n\in\N}$ is a Cauchy sequence in $L^1(\Omega)$. Then there exist $u\in W^{1,1}_0(\Omega)$ and $v\in L^1(\Omega)$ such that
\[
u_n\to u \quad\text{in } W^{1,1}(\Omega),\qquad \Delta u\to v \quad\text{in } L^1(\Omega).
\]
Given $\varphi\in C^\infty_c(\Omega)$, we have
\[
\int_\Omega u \,\Delta \varphi=\lim_{n\to\infty} \int_\Omega u_n\, \Delta \varphi=\lim_{n\to\infty} \int_\Omega \Delta u_n \,\varphi=\int_\Omega v \varphi,
\]
so $\Delta u=v\in L^1(\Omega)$ and $u_n\to u$ in $W^{2,1}_\Delta(\Omega)$.
\end{proof}

\subsection{Embeddings}

For $N\geq 3$, if $\Omega$ is a bounded set, then
\[
W^{1,1}_0(\Omega)\hookrightarrow L^{t}(\Omega) \text{ is continuous for $t\in [1,1^*]$, compact for $t\in [1,1^*)$},
\]
where $1^*$ is defined as in \eqref{eq:1*1**}. If $\Omega$ is a bounded Lipschitz domain, then
\[
W^{2,1}(\Omega)\hookrightarrow L^{t}(\Omega)\text{ is continuous for $t\in [1,1^{**}]$, compact for $t\in [1,1^{**})$},
\]
where $1^{**}$ is defined as in \eqref{eq:1*1**}. See for instance \cite[Theorems 7.22 and 7.26]{GT98}. The last embedding also holds true for $W^{2,1}(\Omega)\cap W^{1,1}_0(\Omega)$, which is a closed subset of $W^{2,1}(\Omega)$.

It is also useful to recall the general case:
\begin{lemma}\label{lemma:Sobolev_embedding} The following hold true when $\Omega$ is a bounded Lipschitz domain:
\begin{enumerate}
\item For $2p<N$, $W^{2,p}(\Omega)\hookrightarrow L^t(\Omega)$ continuous if $t\in  [1,\frac{Np}{N-2p}]$, compact if $t\in [1,\frac{Np}{N-2p})$;
\item For $2p=N$, $W^{2,p}(\Omega)\hookrightarrow L^t(\Omega)$ is compact for every $t\in [1,\infty)$;
\item For $2p>N$, $W^{2,p}(\Omega)\hookrightarrow C^{m,\gamma}(\overline \Omega)$ is continuous, where $m$ is the largest positive integer such that $\gamma=2-\frac{N}p-m\in (0,1)$, and $W^{2,\beta}(\Omega)\hookrightarrow C^{m',\gamma'}(\overline \Omega)$ is compact for $m'\leq m$, $\gamma'\leq \gamma$ and either $m'<m$ or $\gamma'<\gamma$.
\end{enumerate}
\end{lemma}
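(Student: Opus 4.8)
The statement collects the classical Sobolev, Rellich--Kondrachov and Morrey embeddings for second--order spaces on a bounded Lipschitz domain, and the plan is to deduce all three items from their first--order counterparts by iteration. The point is that a bounded Lipschitz $\Omega$ is an extension domain, so one may freely use the standard first--order embeddings: for $1\le p<N$ the continuous inclusion $W^{1,p}(\Omega)\hookrightarrow L^{p^*}(\Omega)$ with $p^*=\tfrac{Np}{N-p}$, which is compact into $L^t(\Omega)$ for every $t\in[1,p^*)$ (Rellich--Kondrachov); for $p=N$ the continuous inclusion $W^{1,N}(\Omega)\hookrightarrow L^t(\Omega)$ for every finite $t$; and for $p>N$ the Morrey inclusion $W^{1,p}(\Omega)\hookrightarrow C^{0,1-N/p}(\overline\Omega)$, compact into $C^{0,\gamma'}(\overline\Omega)$ for $\gamma'<1-N/p$ (Arzel\`a--Ascoli). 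All of these can be found, e.g., in \cite[Chapter 7]{GT98}. Since $u\in W^{2,p}(\Omega)$ forces both $u$ and each $\partial_i u$ to lie in $W^{1,p}(\Omega)$, everything reduces to applying these inclusions once or twice.

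For item (1), with $2p<N$, applying the first--order embedding to $\nabla u$ gives $u\in W^{1,p^*}(\Omega)$; a direct computation yields $N-p^*=\tfrac{N(N-2p)}{N-p}>0$ and $(p^*)^*=\tfrac{Np}{N-2p}$, so a second application gives the continuous inclusion $W^{2,p}(\Omega)\hookrightarrow L^{Np/(N-2p)}(\Omega)$. For $t<\tfrac{Np}{N-2p}$ the step $W^{1,p^*}(\Omega)\hookrightarrow L^t(\Omega)$ is compact, and composing it with the continuous inclusion $W^{2,p}(\Omega)\hookrightarrow W^{1,p^*}(\Omega)$ yields compactness. For item (2), with $2p=N$, here $p=N/2<N$ and $p^*=N$, so $u\in W^{1,N}(\Omega)\hookrightarrow L^t(\Omega)$ for every finite $t$; for compactness, given $t<\infty$ choose $q\in(1,N)$ with $q^*>t$ and factor $W^{2,N/2}(\Omega)\hookrightarrow W^{1,N}(\Omega)\hookrightarrow W^{1,q}(\Omega)\hookrightarrow L^t(\Omega)$, the last inclusion being compact.

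For item (3), with $2p>N$: if $N/2<p<N$ then $p^*=\tfrac{Np}{N-p}>N$ (because $2p>N$), so $u\in W^{1,p^*}(\Omega)\hookrightarrow C^{0,1-N/p^*}(\overline\Omega)$ and $1-\tfrac{N}{p^*}=2-\tfrac Np\in(0,1)$, which is the asserted inclusion with $m=0$; if $p>N$ then already $\nabla u\in W^{1,p}(\Omega)\hookrightarrow C^{0,1-N/p}(\overline\Omega)$, hence $u\in C^{1,1-N/p}(\overline\Omega)$, i.e. $m=1$, $\gamma=2-\tfrac Np-1$. In either case the compact inclusion into $C^{m',\gamma'}(\overline\Omega)$ for $(m',\gamma')$ strictly below $(m,\gamma)$ follows from Arzel\`a--Ascoli applied to $u$ and its derivatives of order $\le m$. (The borderline $p=N$, corresponding to $2-N/p=1$, gives $W^{2,N}(\Omega)\hookrightarrow C^{0,\gamma}(\overline\Omega)$ for every $\gamma<1$.)

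There is no genuine obstacle: the only care required is the bookkeeping of the Sobolev exponents when iterating and the handling of the critical case $2p=N$ and the borderline $p=N$ in item (3); alternatively, the whole statement may be quoted directly from standard references.
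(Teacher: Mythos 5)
Your proof is correct: iterating the first-order Sobolev, Rellich--Kondrachov and Morrey embeddings on a Lipschitz extension domain is exactly the standard route, and your exponent bookkeeping (including the borderline cases $2p=N$ and $p=N$) checks out. The paper itself gives no proof of this lemma — it simply recalls the statement and refers to \cite[Theorems 7.22 and 7.26]{GT98} — so you have supplied the argument that the cited reference encapsulates.
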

We recall the definition of the weak-$L^q$ spaces, which are nothing else than the Lorentz spaces $L^{q,\infty}(\Omega)$. Given a measurable function $u:\Omega\to \R$, its distribution function $\mu_u:\R^+\to \R$ is given by $\mu_u(t):=|\{x\in \Omega:\ |u(x)|>t\}|$. For $q\geq 1$, we define
\[
\|u\|_{q,\infty}^q:=\sup_{t>0} t^q \mu_u(t),\quad L^{q,\infty}(\Omega):=\{u:\Omega\to \R \text{ measurable}:\ \|u\|_{q,\infty}<\infty\}.
\]
Recall that $L^p(\Omega)\hookrightarrow L^{p,\infty}(\Omega)$ and $L^{p,\infty}(\Omega)\hookrightarrow L^q(\Omega)$, for $1\leq q<p$, are continuous embeddings (see for instance \cite[Section 1.4]{Grafakos}).
\begin{prop}[Embeddings]\label{compact:prop}
Let $\Omega$ be either a convex set or a set of class $C^{1,\gamma}$, for some $\gamma\in (0,1]$. The following embedding is continuous
\[
B L_{0}(\Omega) \hookrightarrow L^{1^{**},\infty}(\Omega).
\]
Moreover, the following embeddings are compact:
\begin{align*}
BL_{0}(\Omega) &\hookrightarrow W^{1,q}_0(\Omega) && \text{for } q\in [1,1^*),\\
BL_{0}(\Omega) &\hookrightarrow L^r(\Omega) && \text{for } r\in [1,1^{**}),\\ 
BL_0(\Omega)\cap L^\infty(\Omega) &\hookrightarrow W_0^{1,q}(\Omega) && \text{for }  q\in [1,2).
\end{align*}
\end{prop}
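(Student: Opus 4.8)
The plan is to read off all four statements from the mapping properties of the Green operator
\[
\cK\mu(x):=\int_\Omega G_\Omega(x,y)\,d\mu(y)
\]
acting on the space $\cM(\overline\Omega)$ of finite signed Radon measures on $\overline\Omega$. Indeed, by Lemma~\ref{alt:lem} and Proposition~\ref{prop:equivalence_solutions} every $u\in BL_0(\Omega)$ is the unique very weak (equivalently, distributional) solution of $-\Delta u=\mu$ with $\mu:=-\Delta u$ a finite measure on $\Omega$ and $|\mu|(\Omega)=|\Delta u|_T$, so by Remark~\ref{nico:rmk} one has $u=\cK\mu$ and $\nabla u=\int_\Omega\nabla_xG_\Omega(\cdot,y)\,d\mu(y)$ a.e., differentiation under the integral sign being legitimate by the pointwise bound $|\nabla_xG_\Omega(x,y)|\le C|x-y|^{1-N}$ (valid on convex or $C^{1,\gamma}$ domains). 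Conversely $\cK\mu\in BL_0(\Omega)$ with $|\Delta(\cK\mu)|_T=|\mu|(\Omega)$ for every finite measure, and the part of a measure carried by $\partial\Omega$ contributes nothing since $G_\Omega(x,\cdot)$ vanishes there; so $\cK$ identifies $BL_0(\Omega)$ isometrically with a subspace of $\cM(\overline\Omega)$. Hence it suffices to prove: (i) $\cK\colon\cM(\overline\Omega)\to L^{1^{**},\infty}(\Omega)$ is bounded, and (ii) $\cK\colon\cM(\overline\Omega)\to W^{1,q}_0(\Omega)$ is compact for $q\in[1,1^*)$.

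For (i) I would use $0\le G_\Omega(x,y)\le C|x-y|^{2-N}$ (maximum-principle comparison with the fundamental solution; here $N\ge3$), which bounds $|\cK\mu|$ by a multiple of the Riesz potential of order $2$ of $|\mu|$; the classical weak-type (Marcinkiewicz) estimate for Riesz potentials of finite measures --- equivalently, the Littman--Stampacchia--Weinberger estimate \cite{LSW63} (see also \cite{Stampacchia}) --- then gives $\|u\|_{L^{1^{**},\infty}(\Omega)}\le C|\mu|(\Omega)=C|\Delta u|_T$. (When $N\le2$, $1^{**}=\infty$ and this is either immediate, e.g.\ every $u\in BL_0(\Omega)$ is Lipschitz for $N=1$, or a matter of interpretation of the statement.) For (ii) the idea is to split the kernel near the diagonal: fix $\eps>0$, pick a smooth cutoff $\eta_\eps$ with $\eta_\eps\equiv0$ on $[0,\eps/2]$ and $\eta_\eps\equiv1$ on $[\eps,\infty)$, and set $\cK_\eps\mu(x):=\int_\Omega G_\Omega(x,y)\,\eta_\eps(|x-y|)\,d\mu(y)$. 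Since $G_\Omega(\cdot,y)$ is harmonic (hence smooth) away from $y$ and, on convex or $C^{1,\gamma}$ domains, H\"older-continuously differentiable up to $\partial\Omega$ away from $y$ with bounds uniform in $y$, the kernel of $\cK_\eps$ is H\"older in $x$ uniformly in $y$ and vanishes for $x\in\partial\Omega$; thus $\cK_\eps$ sends the unit ball of $\cM(\overline\Omega)$ into a bounded set of $C^{1,\gamma'}(\overline\Omega)$-functions with zero boundary values, which is precompact in $W^{1,q}_0(\Omega)$. Meanwhile, Minkowski's integral inequality and the bound $|\nabla_x[G_\Omega(x,y)(1-\eta_\eps(|x-y|))]|\le C|x-y|^{1-N}$ on $\{|x-y|\le\eps\}$ give $\|(\cK-\cK_\eps)\mu\|_{W^{1,q}_0(\Omega)}\le C\big(\int_0^\eps\rho^{(1-N)q+N-1}\,d\rho\big)^{1/q}|\mu|(\overline\Omega)$, and the integral is finite and tends to $0$ as $\eps\to0$ exactly when $q<1^*$. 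So $\cK$ is a norm limit of compact operators, hence compact. Granting (i) and (ii): the continuous embedding is (i); and if $(u_n)\subset BL_0(\Omega)$ is bounded then $\mu_n:=-\Delta u_n$ is bounded in $\cM(\overline\Omega)$, so by (ii) a subsequence of $(u_n)=(\cK\mu_n)$ converges in $W^{1,q}_0(\Omega)$ for every $q<1^*$, hence in particular in $L^r(\Omega)$ for every $r<1^{**}$. This gives the first three embeddings; the $L^r$-one alternatively follows directly from Corollary~\ref{BL0_inequality} and the Rellich--Kondrachov theorem, the Sobolev exponents $\tfrac{Nq}{N-q}$ exhausting $[1^*,1^{**})$ as $q\uparrow1^*$.

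The compact embedding $BL_0(\Omega)\cap L^\infty(\Omega)\hookrightarrow W^{1,q}_0(\Omega)$ for $q\in[1,2)$ is the delicate step, since the $L^q$-mapping of $\cK$ reaches only the exponents $1^*$ and $1^{**}$, and $1^*<2$ whenever $N\ge3$: one must leave the purely measure-data regime. I would first upgrade the integrability of the gradient: by the standard truncation estimate for solutions with measure data, $\int_\Omega|\nabla T_k(u)|^2\le k\,|\Delta u|_T$ for all $k>0$ (see e.g.\ \cite[Ch.\ 4]{PonceBook}), so taking $k=\|u\|_\infty$ (whence $T_k(u)=u$ a.e.) gives $\int_\Omega|\nabla u|^2\le\|u\|_\infty|\Delta u|_T$; thus a bounded sequence $(u_n)$ in $BL_0(\Omega)\cap L^\infty(\Omega)$ is bounded in $W^{1,2}_0(\Omega)$. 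Extract $u_n\weakto u$ in $W^{1,2}_0(\Omega)$; by Rellich--Kondrachov $u_n\to u$ in $L^s(\Omega)$ for $s<2^*$, in particular in measure. Put $v_n:=u_n-u$ and $\nu_n:=-\Delta v_n$; then $|\nu_n|(\Omega)$ is bounded and $\nu_n\in W^{-1,2}(\Omega)$ (being a difference of Laplacians of $W^{1,2}_0$-functions), so $T_k(v_n)\in W^{1,2}_0(\Omega)\cap L^\infty(\Omega)$ is an admissible test function and $\int_{\{|v_n|<k\}}|\nabla v_n|^2=\langle\nu_n,T_k(v_n)\rangle\le k\,|\nu_n|(\Omega)\le Ck$. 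Splitting $\int_\Omega|\nabla v_n|^q$ over $\{|v_n|<k\}$ and $\{|v_n|\ge k\}$ and using H\"older's inequality with the $W^{1,2}_0$-bound and $|\{|v_n|\ge k\}|\to0$ (for each fixed $k$, as $n\to\infty$), one obtains $\limsup_n\int_\Omega|\nabla v_n|^q\le|\Omega|^{1-q/2}(Ck)^{q/2}$ for every $k>0$; letting $k\to0$ gives $\nabla v_n\to0$ in $L^q(\Omega)^N$, hence $u_n\to u$ in $W^{1,q}_0(\Omega)$.

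I expect the main obstacle to be twofold: making fully rigorous the identification $u=\cK\mu$, $\nabla u=\nabla\cK\mu$ together with the pointwise Green-function and gradient estimates on convex or $C^{1,\gamma}$ domains --- this is precisely where the regularity/convexity hypothesis on $\Omega$ is used --- and, for the last embedding, the point that the datum $\nu_n$ tests against $W^{1,2}_0$-functions only because it lies in $W^{-1,2}(\Omega)$, which is what legitimizes the truncated test function.
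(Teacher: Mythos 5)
Your proposal is correct in substance but takes a genuinely different, self-contained route from the paper, whose proof of the first three embeddings consists almost entirely of citations to Ponce's book \cite{PonceBook}. You instead reconstruct these results from scratch: the continuous $L^{1^{**},\infty}$ embedding via the pointwise Green-function bound and the Marcinkiewicz weak-type estimate for Riesz potentials of measures; compactness into $W^{1,q}_0$, $q<1^*$, via the kernel-splitting $\cK=\cK_\eps+(\cK-\cK_\eps)$ with the smooth part of the kernel giving $C^{1,\gamma'}(\overline\Omega)$-equicontinuity and the singular part controlled by Minkowski's integral inequality; and the $L^r$-embedding as a corollary, either by composing with Rellich--Kondrachov or by chaining the first two. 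This buys you a fully explicit argument, whereas the paper buys brevity by outsourcing.

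There is one real concern. Your compactness argument for $\cK:\cM(\overline\Omega)\to W^{1,q}_0(\Omega)$ rests on two facts: the pointwise bound $|\nabla_xG_\Omega(x,y)|\le C|x-y|^{1-N}$, and H\"older-$C^1$ regularity of $G_\Omega(\cdot,y)$ up to $\partial\Omega$ away from the pole, uniformly in $y$. Both are standard on $C^{1,\gamma}$ domains, but the statement also admits merely \emph{convex} domains, which can have corners; there, $C^{1}$ boundary regularity of harmonic functions with Dirichlet data is not available (the solution is only $W^{2,2}$ globally, and the gradient need not be continuous up to the boundary). The paper's chain of references sidesteps this by routing everything through the Littman--Stampacchia--Weinberger estimate (Theorem~\ref{th:equivalence_solutions}) and Corollary~\ref{BL0_inequality}, which hold on convex domains precisely because they only require the a priori $W^{1,q}_0$-bound, not pointwise kernel regularity. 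To repair your argument on convex $\Omega$ you would either need a substitute for the Arzel\`a--Ascoli step (e.g.\ an approximation by smooth convex subdomains with uniform constants, or a different compactness criterion near the boundary) or you should restrict the Green-function machinery to the interior and treat a boundary layer separately using the $W^{1,q}_0$-bound from Corollary~\ref{BL0_inequality}.

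For the last embedding $BL_0(\Omega)\cap L^\infty(\Omega)\hookrightarrow W^{1,q}_0(\Omega)$, $q<2$, you and the paper share the key inequality $\|\nabla u\|_2^2\le\|u\|_\infty|\Delta u|_T$, but then diverge. You run a truncation/H\"older-splitting argument that tests the difference $v_n=u_n-u$ against $T_k(v_n)$ (legitimate, as you note, because $-\Delta v_n\in W^{-1,2}$, which forces the measure to be diffuse and makes the pairing with $W^{1,2}_0\cap L^\infty$ quasi-continuous representatives well-defined), then sends $n\to\infty$ and $k\to0$. This is correct but more elaborate than necessary: once you have the $W^{1,2}_0$-bound and the already-established compactness of $BL_0\hookrightarrow W^{1,1}_0$, the $L^p$-interpolation $\|\nabla(u_n-u)\|_q\le\|\nabla(u_n-u)\|_1^{2/q-1}\|\nabla(u_n-u)\|_2^{2-2/q}$ finishes immediately, which is exactly what the paper does. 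Your argument does have the virtue of not relying on the $W^{1,1}_0$-compactness, so it is more self-contained, but it invokes subtler facts about pairings of diffuse measures with $W^{1,2}_0\cap L^\infty$ functions that the simpler interpolation route avoids.
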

\begin{proof}
The assumptions on $\Omega$ allow to use both Corollary~\ref{BL0_inequality} and other estimates for elliptic problems with measure data.

Indeed, the continuity of the embedding $B L_{0}(\Omega) \hookrightarrow L^{1^{**},\infty}(\Omega)$ follows from \cite[Proposition 5.7]{PonceBook} (see also \cite[Theorem 3]{CassaniRufTarsi} for the optimal constant).

The proof of the compactness of the embedding $BL_{0}(\Omega) \hookrightarrow L^r(\Omega)$ can be found in \cite[Proposition 5.9]{PonceBook}, and the one of $BL_{0}(\Omega) \hookrightarrow W^{1,q}_0(\Omega)$ in \cite[Proposition 5.10]{PonceBook}.

Finally, the fact that $BL_0(\Omega)\cap L^\infty(\Omega)\hookrightarrow W_0^{1,q}(\Omega)$ is compact follows from the interpolation inequalities
\[
\|\nabla u\|_{L^2(\Omega)}\leq \|u\|_{L^\infty}|\Delta u|_T \quad \text{ for every } u\in BL_0(\Omega)\cap L^\infty(\Omega)
\]
(see \cite[Lemma 5.8]{PonceBook}), $\|\nabla u\|_{L^q(\Omega)}\leq \|\nabla u\|_1^{\frac{2}{q}-1} \|\nabla u\|_2^{2-\frac{2}{q}}$ and the compact embedding $BL_0(\Omega)\hookrightarrow W^{1,1}_0(\Omega)$.
\end{proof}
\begin{remark}\label{rem:compactness}
Since $W^{2,1}_\Delta \hookrightarrow BL_0(\Omega)$ is continuous (Lemma~\ref{lemma:Deltau}), the results of Proposition~\ref{compact:prop} are true with $BL_0(\Omega)$ replaced by $W^{2,1}_\Delta(\Omega)$.
\end{remark}


\subsection{Density}

\begin{lemma}[Density]\label{density:lemma}
Let $u \in B L_{0}(\Omega)$. Then there exists a sequence ${(u_n)}_{n\in\N}\subseteq C^{\infty}(\Omega) \cap C(\overline{\Omega}) \cap B L_{0}(\Omega)$ converging strictly to $u$, that is:
\begin{align*}
u_n \to u \quad \text{ strongly in } W^{1,1}_0(\Omega),\qquad
|\Delta u_n|_T\to |\Delta u|_T.
\end{align*}
\end{lemma}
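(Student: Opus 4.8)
The plan is to build the approximants by solving a Poisson problem whose right-hand side is a smooth, compactly supported approximation of the measure $\nu:=-\Delta u$, and then to extract the required convergence from the compactness of $BL_0(\Omega)$.

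First I would set up the data. By Lemma~\ref{alt:lem}, $\nu=-\Delta u$ is a finite Radon measure on $\Omega$ with $|\nu|(\Omega)=|\Delta u|_T$. Fix an exhaustion $\Omega_1\Subset\Omega_2\Subset\cdots$ of $\Omega$ by open sets with $\bigcup_n\Omega_n=\Omega$ (so $|\nu|(\Omega\setminus\Omega_n)\to0$) and a standard nonnegative mollifier $\rho_\varepsilon$. For each $n$ pick $\varepsilon_n\in(0,\tfrac12\dist(\Omega_n,\partial\Omega))$ and set $g_n:=(\nu|_{\Omega_n})*\rho_{\varepsilon_n}\in C^\infty_c(\Omega)$. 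Then $\|g_n\|_{L^1(\Omega)}\le|\nu|(\Omega_n)\le|\nu|(\Omega)$, and for $\varphi\in C^\infty_c(\Omega)$ one has $\int_\Omega\varphi\,g_n=\int_{\Omega_n}(\varphi*\rho_{\varepsilon_n})\,d\nu\to\int_\Omega\varphi\,d\nu$, so that $g_n\,dx\rightharpoonup\nu$ weakly-$*$.

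Next I would let $u_n$ be the solution of $-\Delta u_n=g_n$ in $\Omega$, $u_n=0$ on $\partial\Omega$. Since $g_n\in C^\infty_c(\Omega)\subset L^\infty(\Omega)$, interior regularity gives $u_n\in C^\infty(\Omega)$ and the barrier/Wiener criterion for Lipschitz domains gives $u_n\in C(\overline\Omega)$; moreover $\Delta u_n=-g_n\in L^1(\Omega)$, so $u_n\in W^{2,1}_\Delta(\Omega)\subset BL_0(\Omega)$ by Lemma~\ref{lemma:Deltau}, which also yields $|\Delta u_n|_T=|\Delta u_n|_1=\|g_n\|_{L^1}\le|\nu|(\Omega)=|\Delta u|_T$. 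Thus each $u_n$ lies in $C^\infty(\Omega)\cap C(\overline\Omega)\cap BL_0(\Omega)$ and $\limsup_n|\Delta u_n|_T\le|\Delta u|_T$. By Theorem~\ref{th:equivalence_solutions}, $\|u_n\|_{W^{1,1}_0}\le C\|g_n\|_{L^1}\le C|\nu|(\Omega)$, so $(u_n)$ is bounded in $BL_0(\Omega)$; hence by the compact embedding $BL_0(\Omega)\hookrightarrow W^{1,1}_0(\Omega)$ of Proposition~\ref{compact:prop}, along a subsequence $u_n\to w$ strongly in $W^{1,1}_0(\Omega)$ for some $w\in BL_0(\Omega)$. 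Testing against $\varphi\in C^\infty_c(\Omega)$, $\int_\Omega w\,\Delta\varphi=\lim_n\int_\Omega u_n\,\Delta\varphi=-\lim_n\int_\Omega g_n\varphi=-\int_\Omega\varphi\,d\nu=\int_\Omega u\,\Delta\varphi$, so $w-u\in W^{1,1}_0(\Omega)$ solves $\Delta(w-u)=0$ distributionally; by Proposition~\ref{prop:equivalence_solutions} and the uniqueness in Theorem~\ref{th:equivalence_solutions}, $w=u$. As this holds for every subsequence, $u_n\to u$ in $W^{1,1}_0(\Omega)$ along the full sequence, and then Lemma~\ref{lemma:lsc} gives $|\Delta u|_T\le\liminf_n|\Delta u_n|_T\le\limsup_n|\Delta u_n|_T\le|\Delta u|_T$, i.e. $|\Delta u_n|_T\to|\Delta u|_T$. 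This is the desired strictly convergent sequence.

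The main obstacle is exactly the $W^{1,1}_0$-convergence $u_n\to u$: because $\nu$ may have a singular part, $g_n\,dx$ does not converge to $\nu$ in total variation, so the Stampacchia estimate $\|u_n-u\|_{W^{1,1}_0}\le C|g_n\,dx-\nu|(\Omega)$ is useless here, and one must instead route weak-$*$ convergence of the data through the compact embedding $BL_0(\Omega)\hookrightarrow W^{1,1}_0(\Omega)$ and the uniqueness of very weak solutions. This is also where the geometry of $\Omega$ is used in an essential way — for a merely Lipschitz domain uniqueness of $W^{1,1}_0$-distributional solutions of $\Delta z=0$ can fail (cf.\ Remark~\ref{rem:inequality}), so the argument genuinely needs $\Omega$ convex or of class $C^{1,\gamma}$. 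An alternative, more hands-on route would be a partition-of-unity/local-mollification construction in the spirit of the strict $BV$-approximation theorem, but there the delicate point becomes verifying that the resulting (locally finite, but infinite near $\partial\Omega$) sum of mollified pieces extends continuously to $\overline\Omega$ when $u\notin L^\infty$.
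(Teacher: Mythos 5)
Your proof is correct and takes the natural PDE-theoretic route: mollify the measure $\nu=-\Delta u$ after restricting it to an inner exhaustion, solve the Dirichlet--Poisson problem with the mollified data, and obtain the strict convergence from a compactness-plus-lower-semicontinuity squeeze. This is in the same spirit as \cite[Proposition 3.2]{PRT12}, which the paper simply cites without reproducing a proof. The point you yourself flag is the right one to worry about: the strong $W^{1,1}_0$-convergence of $(u_n)$ relies on the compact embedding $BL_0(\Omega)\hookrightarrow W^{1,1}_0(\Omega)$ (Proposition~\ref{compact:prop}), and the identification of the limit $w=u$ relies on the equivalence between distributional and very weak solutions (Proposition~\ref{prop:equivalence_solutions}); both of these need $\Omega$ convex or of class $C^{1,\gamma}$, which is stronger than the Lipschitz standing hypothesis stated at the beginning of the appendix. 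In the paper this is harmless because the lemma is only invoked inside the proof of Lemma~\ref{alpha1:lem}, where those stronger hypotheses are already in force, but a careful formulation of Lemma~\ref{density:lemma} would list them. Your closing remark is exactly on point: on a merely Lipschitz domain the identification step can genuinely fail, since $\Delta z=0$ may admit nontrivial $W^{1,1}_0$ distributional solutions (cf.\ Remark~\ref{rem:inequality}), so the extra regularity is not just a technical convenience of this particular argument.
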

\begin{proof}
See \cite[Proposition 3.2]{PRT12}.
\end{proof}

We also mention that the closure of $C^\infty_c(\Omega)$ with respect to the norm $|\Delta \cdot|_1$ is denoted $W^{2,1}_{\Delta,0}(\Omega)$ and it is studied in \cite{PRT15} in the context of eigenvalue problems.

\begin{lemma}\label{lemma:density}
If $u\in  C^\infty(\Omega) \cap C(\overline \Omega)\cap BL_0(\Omega)$, then $\Delta u\in L^1(\Omega)$. In particular,
\[
C^\infty(\Omega)\cap C(\overline \Omega)\cap BL_0(\Omega)\subseteq W^{2,1}_\Delta(\Omega).
\]
\end{lemma}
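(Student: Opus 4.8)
The goal is to prove Lemma~\ref{lemma:density}: if $u\in C^\infty(\Omega)\cap C(\overline\Omega)\cap BL_0(\Omega)$, then $\Delta u\in L^1(\Omega)$. The point is that for such a smooth $u$ the distribution $\Delta u$ is \emph{a priori} just a classical (pointwise) function on $\Omega$, but it might fail to be integrable near $\partial\Omega$; the hypothesis $u\in BL_0(\Omega)$ — which guarantees that $\Delta u$ is a finite Radon measure with $|\Delta u|_T<\infty$ — should force the classical function $\Delta u$ to coincide with that measure and hence lie in $L^1(\Omega)$.

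Let me sketch the plan. Write $f:=\Delta u$, understood in the classical sense on $\Omega$; since $u\in C^\infty(\Omega)$, $f\in C^\infty(\Omega)\cap L^1_{loc}(\Omega)$. By Lemma~\ref{alt:lem}, the assumption $u\in BL_0(\Omega)$ means that the distribution $\Delta u$ is a finite Radon measure $\mu$ with $|\mu|(\Omega)=|\Delta u|_T<\infty$. Testing against $\varphi\in C^\infty_c(\Omega)$ and integrating by parts twice (legitimate since $u$ and $f$ are smooth \emph{inside} $\Omega$, and $\varphi$ has compact support), one gets $\int_\Omega f\varphi = \int_\Omega \varphi\, d\mu$ for every $\varphi\in C^\infty_c(\Omega)$, i.e. the measure $\mu$ has density $f$ with respect to Lebesgue measure on every compact subset of $\Omega$; equivalently $\mu = f\,dx$ as measures on $\Omega$ (the absolutely continuous part carries $f\,dx$ and there is no singular part, because $f\,dx$ already reproduces $\mu$ against all test functions, which determine a Radon measure). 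Then $|\mu|(\Omega)=\int_\Omega |f|\,dx$, and since the left-hand side is finite, $f\in L^1(\Omega)$. The last inclusion $C^\infty(\Omega)\cap C(\overline\Omega)\cap BL_0(\Omega)\subseteq W^{2,1}_\Delta(\Omega)$ is then immediate from the definition~\eqref{def:Xp} of $W^{2,1}_\Delta(\Omega)$, since such a $u$ lies in $W^{1,1}_0(\Omega)$ (being in $BL_0(\Omega)$) and we have just shown $\Delta u\in L^1(\Omega)$.

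In more detail, the one genuinely delicate point is the claim that the distribution $\Delta u$, which we know to be a finite Radon measure, actually equals $f\,dx$ — not just locally but as measures on all of $\Omega$, so that its total variation is exactly $\int_\Omega|f|$. I would argue as follows: two finite Radon measures on $\Omega$ that agree when integrated against all of $C^\infty_c(\Omega)$ must coincide (standard, by density of $C^\infty_c(\Omega)$ in $C_c(\Omega)$ and the Riesz representation theorem), and we have shown $\int\varphi\,d\mu=\int\varphi f\,dx$ for all such $\varphi$; one must only check that $f\,dx$ \emph{is} a finite (signed) measure a priori — but this is automatic once we know $\mu$ is finite and equals $f\,dx$ locally: on any exhaustion $\Omega_k\uparrow\Omega$ by compact sets, $\int_{\Omega_k}|f| = |\mu|(\Omega_k)\le |\mu|(\Omega)<\infty$, and monotone convergence gives $\int_\Omega|f| = |\mu|(\Omega)<\infty$. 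That chain — local identification of $\mu$ with $f\,dx$, then monotone passage to the full domain to control $\int|f|$ by the finite quantity $|\Delta u|_T$ — is the heart of the argument; everything else (the two integrations by parts, the final membership in $W^{2,1}_\Delta(\Omega)$) is routine. An alternative, equally short route is to invoke Lemma~\ref{lemma:Deltau} directly: it is enough to show $|\Delta u|_1<\infty$, and the computation above shows $|\Delta u|_1=|\Delta u|_T<\infty$.

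I expect the main obstacle to be purely expository: making precise, without fuss, the step ``the classical Laplacian $f$ of a smooth function in $BL_0$ \emph{is} the measure $\Delta u$,'' i.e. ruling out any mismatch (a hypothetical singular part concentrated on $\partial\Omega$) — which cannot happen since $\mu$ is by definition a measure \emph{on $\Omega$}, an open set, and $f\,dx$ already accounts for all the mass detected by test functions in $C^\infty_c(\Omega)$. Once that is stated cleanly, the proof is three lines.
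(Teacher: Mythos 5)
Your proof is correct, and it follows a genuinely different route than the paper's. You identify the distributional Laplacian (a finite Radon measure $\mu$, by Lemma~\ref{alt:lem}) with the density $f\,dx$, where $f$ is the classical Laplacian, by testing against $C^\infty_c(\Omega)$ and using the uniqueness of the Riesz representation on each compact exhaustion $\Omega_k$; the bound $\int_{\Omega_k}|f|=|\mu|(\Omega_k)\leq|\mu|(\Omega)<\infty$ plus monotone convergence then delivers $f\in L^1(\Omega)$ with $|f|_1=|\Delta u|_T$. The paper instead argues constructively: it builds explicit test functions $v_n=(\operatorname{sign}(\Delta u)*\eta_{1/n})\varphi_n / |\,\cdot\,|_\infty\in C^\infty_c(\Omega)$ with $|v_n|_\infty=1$ and $v_n\to\operatorname{sign}(\Delta u)$ a.e., and then invokes Fatou's lemma to pass $\int_\Omega|\Delta u|=\int_\Omega\lim_n (\Delta u)\,v_n\leq\liminf_n\int u\,\Delta v_n\leq|\Delta u|_T$. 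The two proofs are conceptually close — both rest on classical integration by parts and the bound by $|\Delta u|_T$ — but yours replaces the constructive approximation and Fatou step with the abstract measure-theoretic identification of $\mu$ with $f\,dx$. Your route has the advantage of sidestepping a delicate point in the Fatou application: the integrand $v_n\,\Delta u$ is not nonnegative and does not obviously admit an integrable lower bound before one knows $\Delta u\in L^1$, so the standard hypotheses of Fatou are not immediately in force; a careful reading of the paper's parenthetical justification suggests this step is being treated a bit loosely. Your exhaustion-plus-monotone-convergence argument avoids this entirely and is cleaner, at the cost of being less explicit about which test functions nearly saturate the supremum defining $|\Delta u|_T$.
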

\begin{proof}
This is stated without proof in \cite[page 317]{PRT12}. We include here a proof for completeness.

Given $n\in \N$ sufficiently large, take $\varphi_n\in C^\infty_c(\Omega)$ such that 
\[
\varphi_n=1\text{ in the set } \{x\in \Omega:\ \dist(x,\partial \Omega)\geq 1/n\}.
\]
 Define the function
\[
v_n=\frac{(\text{sign}(\Delta u)\ast \eta_{1/n}) \varphi_n}{|(\text{sign}(\Delta u)\ast \eta_{1/n}) \varphi_n|_\infty}\in C^\infty_c(\Omega),
\]
where $\eta_{1/n}$ is a sequence of mollifiers. Then $\|v_n\|_\infty=1$, $v_n\to \text{sign}(\Delta u)$ a.e. in $\Omega$. By Fatou's lemma ($v_n \Delta u$ is bounded and $|\Omega|<\infty$),
\[
\int_\Omega |\Delta u|=\int_\Omega \lim_n  \Delta uv_n\leq \liminf_n \int \Delta u v_n=\liminf_n \int u \Delta v_n\leq |\Delta u|_T<\infty.
\]
\end{proof}

\begin{lemma}\label{lemma:densityC^2} Let $\Omega$ be a bounded domain of class $C^{2,\gamma}$, for some $\gamma\in (0,1]$. Then the space $C^{2,\gamma}(\overline \Omega)$ is dense in $W^{2,1}_\Delta (\Omega)$.
\end{lemma}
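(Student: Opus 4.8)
\textbf{Proof proposal for Lemma~\ref{lemma:densityC^2}.}

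The plan is to approximate an arbitrary $u\in W^{2,1}_\Delta(\Omega)$ by solving a sequence of linear Dirichlet problems with smooth right-hand sides. First I would set $f=\Delta u\in L^1(\Omega)$ and pick, by density of smooth functions in $L^1(\Omega)$, a sequence $(f_n)_{n\in\N}\subset C^\infty_c(\Omega)$ (or $C^{0,\gamma}(\overline\Omega)$, which is enough) such that $f_n\to f$ in $L^1(\Omega)$. Let $u_n$ be the solution of
\begin{align*}
\Delta u_n=f_n\ \text{ in }\Omega,\qquad u_n=0\ \text{ on }\partial\Omega.
\end{align*}
Since $\Omega$ is of class $C^{2,\gamma}$ and $f_n\in C^{0,\gamma}(\overline\Omega)$, Schauder theory (e.g.\ \cite[Theorem 6.14]{GT98}) gives $u_n\in C^{2,\gamma}(\overline\Omega)$. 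The task is then to show $u_n\to u$ in $W^{2,1}_\Delta(\Omega)$, i.e.\ (using Lemma~\ref{equiv:lem}-2, which says the $W^{2,1}_\Delta$ norm is equivalent to $\|u\|_{W^{1,1}}+|\Delta u|_1$) that $u_n\to u$ in $W^{1,1}_0(\Omega)$ and $\Delta u_n\to\Delta u$ in $L^1(\Omega)$.

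The convergence $\Delta u_n=f_n\to f=\Delta u$ in $L^1(\Omega)$ holds by construction. For the $W^{1,1}_0$ part, observe that $w_n:=u_n-u$ solves $\Delta w_n=f_n-f$ in $\Omega$ (in the distributional sense) with $w_n\in W^{1,1}_0(\Omega)$, hence $-\Delta w_n=f-f_n$ with $f-f_n\in L^1(\Omega)\subset$ (finite Radon measures). By Proposition~\ref{prop:equivalence_solutions}, $w_n$ is the unique very weak solution of this problem, and Theorem~\ref{th:equivalence_solutions} (applied with $q=1$) gives a universal constant $C>0$ with
\begin{align*}
\|w_n\|_{W^{1,1}_0}\leq C\,|f-f_n|(\Omega)=C\,|f-f_n|_1\longrightarrow 0.
\end{align*}
Combining the two convergences yields $u_n\to u$ in $W^{2,1}_\Delta(\Omega)$, which proves the density claim; alternatively one can invoke Corollary~\ref{BL0_inequality} with $q=1$ in place of Theorem~\ref{th:equivalence_solutions} to get the same estimate on $\|w_n\|_{W^{1,1}_0}$.

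I do not expect a serious obstacle here: the only subtlety is making sure the approximating problems have $C^{2,\gamma}(\overline\Omega)$ solutions, which requires both the $C^{2,\gamma}$ regularity of $\partial\Omega$ (needed for global Schauder estimates) and the mild regularity of the data $f_n$ — taking $f_n\in C^\infty_c(\Omega)$ handles the latter trivially. One should also confirm that $u\in W^{2,1}_\Delta(\Omega)$ genuinely is the unique very weak solution of $\Delta w=f$ with zero boundary data, which is exactly the content of Proposition~\ref{prop:equivalence_solutions} together with Theorem~\ref{th:equivalence_solutions}; this is where convexity or $C^{1,\gamma}$ (in particular $C^{2,\gamma}$) regularity of $\Omega$ is used, and it is already available in the excerpt.
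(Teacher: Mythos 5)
Your proof is correct and follows essentially the same strategy as the paper: approximate $\Delta u$ in $L^1(\Omega)$ by $f_n\in C^\infty_c(\Omega)$, solve the Dirichlet problem $\Delta u_n=f_n$, $u_n=0$ on $\partial\Omega$, and use global Schauder estimates to get $u_n\in C^{2,\gamma}(\overline\Omega)$ with $|\Delta u_n-\Delta u|_1=|f_n-\Delta u|_1\to0$. Your extra verification that $u_n\to u$ in $W^{1,1}_0(\Omega)$ via Theorem~\ref{th:equivalence_solutions} is correct but redundant, since Lemma~\ref{equiv:lem}-2 already says $|\Delta\cdot|_1$ alone is an equivalent norm on $W^{2,1}_\Delta(\Omega)$, which is exactly what the paper invokes to conclude directly.
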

\begin{proof}Consider $u\in W^{2,1}_\Delta(\Omega)$, which in particular means that $\Delta u\in L^1(\Omega)$, and fix $\eps>0$. There exists $f\in C^\infty_c(\Omega)\subset C^{2,\gamma}(\overline \Omega)$ such that $|f-\Delta u|_1<\eps$. If $v$ denotes the solution to the Dirichlet problem $\Delta v=f$ in $\Omega$ and $v=0$ on $\partial\Omega$ then, by elliptic global regularity (see \cite[Theorem 6.14]{GT98}), $v\in C^{2,\gamma}(\overline{\Omega})$ and 
        \[
       |\Delta v-\Delta u|_1=|f-\Delta u|_1<\eps,
        \]
        by construction. Recalling that  $|\Delta \cdot|_1$ is a norm in $W^{2,1}_\Delta(\Omega)$ (by Lemma~\ref{equiv:lem}), we see that $C^\infty(\overline\Omega)$ is dense in $W^{2,1}_\Delta(\Omega)$.
        \end{proof}

\begin{remark}\label{rem:BP_comment}
Proposition 2.7 in \cite{BP18} states that $BL_0(\Omega)\hookrightarrow L^{1^{**}}(\Omega)$ is continuous, among other results. This is not true at least for $N\geq 3$, as it can be shown by a simple counterexample: on $\Omega=B_1$, we have $G_{B_1}(\cdot,0)\in BL_0(B_1)$ with $|\Delta G_{B_1}(\cdot,0)|_T=|\delta_0|_T=1$ and, more explicitly,
\[
G_{B_1}(x,0)=c_N(|x|^{2-N}-1)
\qquad\text{for }x\in B_1, 
\]
which clearly does not lie in $L^{1^{**}}(B_1)$ (see also the counterexample in \cite[page 83]{PonceBook}).
This also implies that
    \[
    C^\infty(\Omega)\cap C(\overline \Omega)   \cap BL_0(\Omega) \text{ is } not \text{ contained in } W^{2,1}(\Omega). 
    \]
    In fact, if this were true, then the proof of \cite[Proposition 2.7]{BP18} would be correct, showing the wrong fact that $BL_0(\Omega)\hookrightarrow L^{1^{**}}(\Omega)$ is continuous. The ``proof'' would go  as follows: given $u\in BL_0(\Omega)$, by Lemma~\ref{density:lemma} we can take a sequence $(u_n)\subset C^\infty(\Omega)\cap C(\overline \Omega)   \cap BL_0(\Omega)$ such that $u_n\to u$ strictly in $BL_0(\Omega)$. If $C(\overline \Omega) \cap C^\infty(\Omega) \cap BL_0(\Omega)\subset W^{2,1}(\Omega)$, then $(u_n)\subset W^{2,1}(\Omega)$, which is continuously embedded in $L^{1^{**}}(\Omega)$ and  the rest of the proof easily follows. 
    
    Although this claim in \cite[Proposition 2.7]{BP18} is not true, we emphasize that this fact is never used in the proof of the main theorems, which remain valid. Since the paper deals with subcritical nonlinearities, only the compact embeddings presented in our Proposition~\ref{compact:prop} are needed.
    
    We also point out that the same incorrect fact is also used  in \cite[Proposition 2.7]{BP18} to prove the compactness of $BL_0(\Omega)\hookrightarrow L^q(\Omega)$ for $q\in [1,1^{**})$; although the proof contains a mistake, the result is correct, see our Proposition~\ref{compact:prop}.
\end{remark}

\vspace{3\bigskipamount}

\noindent\textbf{Nicola Abatangelo}\\
Dipartimento di Matematica\\ 
Alma Mater Studiorum Università di Bologna\\ 
P.zza di Porta S. Donato, 5\\
40126 Bologna, Italy\\
\texttt{nicola.abatangelo@unibo.it}
\bigskip

\noindent\textbf{Alberto Salda\~na}\\
Instituto de Matem\'aticas\\
Universidad Nacional Aut\'onoma de M\'exico\\
Circuito Exterior, Ciudad Universitaria\\
04510 Coyoac\'an, Ciudad de M\'exico, Mexico\\
\texttt{alberto.saldana@im.unam.mx}
\bigskip

\noindent\textbf{Hugo Tavares}\\
CAMGSD - Centro de An\'alise Matem\'atica, Geometria e Sistemas Din\^amicos\\
Departamento de Matem\'atica do Instituto Superior T\'ecnico\\
Universidade de Lisboa\\
1049-001 Lisboa, Portugal\\
\texttt{hugo.n.tavares@tecnico.ulisboa.pt}

\end{document}